\documentclass{amsart}
\theoremstyle{plain}
\usepackage{enumitem}
\usepackage{nicefrac}
\usepackage{amsmath}
\usepackage{bbm}
\usepackage[multiple]{footmisc}
\usepackage{amsthm}
\usepackage{amssymb}
\usepackage{xfrac}
\usepackage{array}
\usepackage{mathtools}
\usepackage{stmaryrd}
\usepackage{float}
\usepackage{xcolor}
\usepackage{tikz-cd}

\newtheorem{prop}{Proposition}

\newtheorem{theorem}[prop]{Theorem}

\newtheorem{cor}[prop]{Corollary}
\newtheorem{lemma}[prop]{Lemma}

\numberwithin{prop}{section}
\numberwithin{equation}{section}

\newtheorem*{definition*}{Definition}
\newtheorem*{claim*}{Claim}
\newtheorem*{theorem*}{Theorem}
\newtheorem*{prop*}{Proposition}
\newtheorem*{example*}{Example}
\newtheorem*{remark*}{Remark}

\DeclareMathOperator{\arcsinh}{arcsinh}

\newcommand{\RR}{\ensuremath{\mathbb{R}}}

\newcommand{\NN}{\ensuremath{\mathbb{N}}}
\newcommand{\ZZ}{\ensuremath{\mathbb{Z}}}

\newcommand{\editA}[1]{ #1}

%opening
\title{Surface groups are  flexibly stable}

\author{Nir Lazarovich, Arie Levit and Yair Minsky}

\thanks{The third named author  was partially supported by NSF grant DMS-1610827.}

\begin{document}

\begin{abstract}
We show that surface groups are  flexibly stable in permutations. 
%This means that any map from a surface group to a finite symmetric group sufficiently close to being a homomorphism is close to an actual homomorphism into a symmetric group of a somewhat larger degree. 
This is the first non-trivial example of a non-amenable flexibly stable group.
Our method is  purely geometric and relies on an analysis of branched covers of hyperbolic surfaces. Along the way we establish a  quantitative variant  of   the LERF property for surface groups which may be  of independent interest.
\end{abstract}

\maketitle

\section{Introduction}
\label{sec:intro}

Stability in group theory studies  when almost-actions of groups are close to honest actions. This recent notion has been investigated for various types of actions and groups. Of particular interest is the stability of actions by permutations on finite sets, as motivated by the outstanding open problem of   whether every group is sofic. For more on the relation to soficity and existing results we refer to the end of this introduction.

% actions on finite sets. , in particular on finite sets. A motivation for the study of stability is the  

\subsection*{Flexible stability}
Let $G  $ be a finitely presented group with set of generators $\Sigma$. Let $F_\Sigma$ be a free group with basis $\Sigma$ and   $\pi_\Sigma : F_\Sigma \to G$ be the natural quotient map. Consider the space $\mathrm{Hom}(F_\Sigma, \mathrm{S}_N)$ of homomorphisms from $F_\Sigma$ into the finite symmetric group $\mathrm{S}_N$ for some   $N \in \mathbb{N}$.

Roughly speaking the group $G$ is flexibly stable if any $\rho\in\mathrm{Hom}(F_\Sigma,\mathrm{S}_N)$   that almost factors through $\pi_\Sigma$ is close to some $\rho' \in\mathrm{Hom}(F_\Sigma,\mathrm{S}_M)$ that actually factors through $\pi_\Sigma$ for some $M \in \NN$ potentially slightly larger than $N$.

%Let $G = \left<\Sigma|R\right>$ be a finitely presented group with generators $\Sigma$ and relations $R$. Let $F_\Sigma$ be a free group with basis $\Sigma$ and   $\pi_\Sigma : F_\Sigma \to G$ be the natural quotient map. 

To make the above notion precise equip the symmetric group $\mathrm{S}_N$  with  the bi-invariant \emph{normalized Hamming  metric}  $\mathrm{d}_N$ given by
$$ \mathrm{d}_N \left( \sigma_1, \sigma_2 \right) = \frac{1}{N}   | \{ i \in \{1,\ldots, N\}\: : \:  \sigma_1(i) \neq  \sigma_2(i) \} | $$
for any pair of elemets $\sigma_1, \sigma_2 \in \mathrm{S}_N$.
The  space $\mathrm{Hom}(F_\Sigma, \mathrm{S}_N)$  then admits the corresponding metric
$$ \mathrm{d}_N \left( \rho_1, \rho_2 \right) =  \sum_{\sigma\in\Sigma} \mathrm{d}_N \left(\rho_1(\sigma), \rho_2(\sigma) \right)  $$
for any pair of homomorphisms $\rho_1, \rho_2 \in \mathrm{Hom}(F_\Sigma, \mathrm{S}_N)$.  
Consider the map $$\mathcal{E}_{N}^{M} :  \mathrm{Hom}(F_\Sigma, \mathrm{S}_N) \to  \mathrm{Hom}(F_\Sigma, \mathrm{S}_M)$$ defined for every    $N,  M \in \mathbb{N}$ with $N \le M$   by extending a given permutation representation of $F_\Sigma$ to act trivially on the  extra $M - N$  points. 
%for every pair of permutations $\pi_1, \pi_2 \in \mathrm{S}_N$.

\begin{definition*}
  The group $G$ is \emph{flexibly stable in permutations} if it admits a   finite presentation $G = \left<\Sigma|R\right>$ with the following property: for any $\varepsilon > 0$ there is some $\delta  = \delta(\varepsilon) > 0$ such that for any  $N \in \mathbb{N}$ and $\rho \in \mathrm{Hom}(F_\Sigma,\mathrm{S}_N)$  satisfying 
$$ \mathrm{d}_N(\rho(r), \mathrm{id}_N) <  \delta \quad \forall r \in R $$
there exists $M \in \mathbb{N}$ with $ N \le M \le (1+\varepsilon) N$ and  $\rho' \in \mathrm{Hom}(F_\Sigma, \mathrm{S}_{M})$ that factors through $\pi_\Sigma$ and satisfies 
$$ \mathrm{d}_{M} \left(\mathcal{E}_N^{M} (\rho), \rho' \right) < \varepsilon. $$ 
\end{definition*}
%To define 
%   and 
 %$$r(\pi'_1, \ldots, \pi'_n) = e_{\mathrm{Sym}(N')} \quad \forall r \in R. $$
%The group $G$ is \emph{stable in permutations} if it is flexibly stable in permutations and the degree $N'$ can always be taken to be equal to $N$. 
%Here we regard each $\pi_i$ as an element of $\mathrm{Sym}(N')$ in the obvious way.  

 We remark that if the above definition   holds with respect to one finite presentation of the group $G$ then indeed it holds for all such presentations  \cite{arzhantseva2015almost}.    
 \vspace{5pt}
 
  The following is our main result.
%Our main result is the following.

%ALON ARIE

%A finitely presented group $G = \left<S|R\right>$ is   \emph{stable in permutations}  if for any $\varepsilon > 0$ there is some $\delta > 0$ such that any family of permutations  $\pi_\sigma$ with $\sigma \in S$ that $\delta$-almost satisfies every relation $r \in R$ is $\varepsilon$-close to a family  of permutations $\pi'_\sigma$ that strictly satisfies   $ R$. Stability in permutations  is  a property of the group $G$ in the sense that it  is independent of the particular  presentation.

%The goal of this paper is to prove the following result.

\begin{theorem}
\label{thm:main result}
Let $S$ be a closed orientable surface of genus $g \ge 2$. Then the fundamental group $\pi_1(S)$  is flexibly stable in permutations.
\end{theorem}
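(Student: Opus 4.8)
The plan is to recast flexible stability geometrically. An almost‑homomorphism will produce a \emph{branched} cover of the hyperbolic surface $S$ with few, mild branch points; we then carve out of it a large unbranched subsurface and realize that subsurface inside a genuine finite cover of $S$ of nearly the same degree, by means of an efficient quantitative version of LERF. Throughout, $C=C(S)$ denotes a constant depending only on $S$, possibly changing from line to line.

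Present $S$ as a $4g$‑gon with sides identified according to the relator $r=\prod_{i=1}^{g}[a_i,b_i]$, and write $S_0=S\setminus\mathrm{int}(D_p)$ for the complementary once‑holed surface, so $\pi_1(S_0)=F_\Sigma$ is free on $\Sigma$ and $\partial S_0$ represents $r$. A homomorphism $\rho\in\mathrm{Hom}(F_\Sigma,\mathrm{S}_N)$ is the monodromy of a degree‑$N$ covering $Y_\rho\to S_0$; capping off $Y_\rho$ by gluing a disk along each boundary circle — wrapping $e$ times when the corresponding cycle of $\rho(r)$ has length $e$ — yields a closed surface $X_\rho$ with a degree‑$N$ branched covering $X_\rho\to S$ branched only over $p$, and the hypothesis $\mathrm{d}_N(\rho(r),\mathrm{id}_N)<\delta$ is exactly the statement that the total ramification $\sum_i(e_i-1)$ is at most $\delta N$. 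Pulling back a fixed hyperbolic metric on $S$ turns $X_\rho$ into a hyperbolic cone surface of area $N\cdot\mathrm{area}(S)$ whose cone points lie over $p$ and have total excess angle $2\pi\sum_i(e_i-1)\le 2\pi\delta N$. Excising small embedded disks around the cone points leaves a compact subsurface $F\subseteq X_\rho$, incompressible in $X_\rho$, with the restricted map $F\to S$, a local isometry away from $\partial F$; using the bounded geometry of $X_\rho$ one arranges that $\mathrm{area}(X_\rho)-\mathrm{area}(F)$ and the boundary length $\ell(\partial F)$ are both at most $C\delta N$.

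The whole problem now reduces to an \emph{efficient quantitative LERF} for $\pi_1(S)$, which is the technical core: every local isometry $F\to S$ from a compact hyperbolic surface‑with‑boundary lifts to an embedding into a finite cover $\widehat S\to S$ with $\deg\widehat S\le \mathrm{area}(F)/\mathrm{area}(S)+C\cdot\ell(\partial F)$. That $F$ embeds in \emph{some} finite cover is classical subgroup separability (LERF, or the virtual‑retract property of geometrically finite subgroups of surface groups); the new point is that the number of \emph{extra} sheets is linear in $\ell(\partial F)$ with no dependence on $\mathrm{area}(F)$ — proved by analysing how the sheets of an immersed hyperbolic surface overlap, showing such folding is confined to a bounded neighbourhood of the boundary, via the thick–thin decomposition, collar lemmas, and explicit small partial covers of $S$. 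Granting it, $\widehat S$ has degree $M_0\in[N-C\delta N,\,N+C\delta N]$ and, over $S_0$, restricts to $Y_\rho\to S_0$ on all but $\le C\delta N$ sheets (since $F$ omits $\le C\delta N$ sheets of $X_\rho$, the embedding disturbs the monodromy only on sheets meeting the short boundary region, and $\widehat S\setminus F$ adds $\le C\delta N$ more). Adjoining at most $C\delta N$ disjoint copies of the trivial cover $S\to S$ so that the degree $M$ lands in $[N,(1+\varepsilon)N]$, the monodromy $\rho'$ of the resulting cover (extended to $F_\Sigma$) factors through $\pi_\Sigma$ — the cover being genuine — and satisfies $\mathrm{d}_M(\mathcal E_N^M(\rho),\rho')\le C\delta$, which is $<\varepsilon$ provided $\delta\le\delta(\varepsilon):=\varepsilon/C$.

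The main obstacle is exactly this efficient quantitative LERF. Classical separability makes a given subsurface embedded in \emph{some} finite cover but offers no control, and in particular no \emph{relative} control, on the degree; here we need that turning an immersed geodesic subsurface into an embedded one costs only a number of extra sheets proportional to its boundary length, regardless of how large or how intricately self‑overlapping the surface is. Establishing this bounded‑folding phenomenon for immersed hyperbolic surfaces is where the geometric substance of the argument lies; the surrounding reductions — capping off, excising cone points, the Hamming bookkeeping, padding the degree, and handling disconnected or already‑unramified components of $X_\rho$ — are comparatively routine.
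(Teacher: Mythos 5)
Your overall architecture — recast flexible stability as a statement about branched covers of $S$ with small total ramification, carve out a large subsurface, and cap it off inside a genuine finite cover via a quantitative LERF with cost linear in boundary length — is the same as the paper's, and your bookkeeping (the translation between $\mathrm{d}_N(\rho(r),\mathrm{id}_N)$ and $\beta(X_\rho)$, the Hamming estimate, the padding of the degree) is fine. The quantitative LERF you formulate is essentially the paper's Theorem~\ref{thm:variant of LERF}.

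However, there is a genuine gap in the cutting step, and it is not a minor one. You obtain the subsurface $F$ by excising small round disks about the cone points of $X_\rho$. The boundary of $F$ then consists of distance circles seen from the \emph{outside}; these are strictly concave from the $F$ side, so $\partial F$ is not locally convex, and the restriction $p|_F$ is a local isometry only on the interior, as you yourself note. But the quantitative LERF you invoke (and the paper's Theorem~\ref{thm:variant of LERF}) requires the boundary to be locally convex, and this is not a technical convenience but the crux: by Lemma~\ref{lem:local isometry and locally convex boundary}, locally convex boundary is \emph{equivalent} to the existence of a local isometry $F\to S$ on all of $F$, which in turn (via the Cartan--Hadamard argument, Lemma~\ref{lem:lifting local isometry}) is what makes the developing map $\widetilde{F}\to\HH$ an embedding and hence lets $F$ sit inside \emph{some} cover at all. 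For your $F$ the developing map is visibly non-injective: the boundary circle around a cone point of index $d$ develops onto a circle of the same radius in $\HH$, wrapping around it $d$ times. So $F$ does not embed in \emph{any} cover of $S$, and the reduction to quantitative LERF collapses before it starts. More generally, as the paper points out at the start of Section~6, a branched cover can contain essential simple closed curves whose developed images self-intersect; these "bad" curves are not destroyed by deleting neighborhoods of the singular points, and they must be cut.

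What the paper does instead — and what your outline is missing — is to cut $X_\rho$ along a geodesic \emph{graph} $\Gamma$ with vertices at the singular points, chosen so that every complementary piece has locally convex (piecewise-geodesic) boundary and $l(\Gamma)$ is controlled linearly by $\beta(X_\rho)$. Producing such a graph is where the real work is: the paper builds it as a carefully pruned subgraph of the Delaunay graph relative to the singular set, using the Voronoi/Delaunay analysis on the CAT$(-1)$ "singular hyperbolic plane" $\widetilde{X}$ (Section~3, Appendix~A) and the area estimate of Lemma~\ref{lem:hyperbolic geometry lemma} to bound the number and lengths of edges one must retain at each vertex (Theorem~\ref{thm:existance of cut graphs}). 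Once you have such a cut graph, each component of $X_\rho\setminus\Gamma$ is nonpositively curved with locally convex boundary, so Lemma~\ref{lem:local isometry and locally convex boundary} applies and the quantitative LERF does the rest. Your sketch of the LERF step itself is also rather far from the paper's actual argument (which proceeds combinatorially through $\mathcal{P}$-surfaces and Neumann's parity lemma on boundary monodromies, not through thick–thin decompositions), but the first gap is the fatal one: without a cut graph that produces locally convex boundary, the subsurface you feed into LERF simply isn't eligible.
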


In fact   our  proof of Theorem \ref{thm:main result}   gives an explicit relation between $\varepsilon$ and $\delta$. Up to constants    $\varepsilon = \delta \ln(1/\delta)  $.  This means that $\delta = o(\varepsilon)$ and $\varepsilon^2 = o(\delta)$ in the limit as both $\varepsilon $ and $\delta$  go to $0$.
 
\subsection*{On our method}

Our approach to proving flexible stability is purely geometric. 

We use   covering space theory to reformulate the problem in terms of  certain  branched covers of $S$. In this language the goal is to convert a given branched cover of $S$ into an unramified one by performing an amount of changes controlled by the total branching degree. For more information we refer the reader to Theorem \ref{thm:surface groups are stable} and the discussion leading to it.

We dissect a  given branched cover $X$ of $S$ along a carefully constructed  embedded   graph $\Gamma$ with vertices at singular points of $X$ and with geodesic edges. In fact   $\Gamma$ is  taken to be a sub-graph of the Delaunay graph with respect to the singular points. It is chosen in such a way that every  resulting connected component of $X \setminus \Gamma$ has locally convex boundary and that the total length of all boundary curves is controlled by the total branching degree of $X$. See Theorem \ref{thm:existance of cut graphs}.  

The desired unramified cover of $S$ is  obtained by individually embedding every connected component of $X\setminus \Gamma$  into an unramified cover of $S$ with  controlled area. This relies on the following result,  which seems to be of independent interest. 

 %The geometric notion of stability implies   the algebraic one.  Given a $*$-cover $X$ of the surface in question  we construct a so called cut graph $\Gamma$ on $X$ satisfying certain geometric properties. The total edge length of the cut graph $\Gamma$ is controlled by the amount the $*$-cover $X$ differs from a honest cover. The cut graph is then used to cut $X$ into connected components having sufficiently nicely-behaved boundary. We call these cut surfaces. 

\begin{theorem}
\label{thm:variant of LERF}
Let $S$ be a closed hyperbolic surface.   Let $R$ be a   surface with boundary which is isometrically embedded in some cover of $S$. If the boundary $\partial R$ is locally convex  then  $R$ can be isometrically  embedded in a cover $Q$ of $S$  such that  the diagram
\begin{equation}
\label{eq:commute R}
  \begin{tikzcd}[column sep=tiny,row sep=small]
R \arrow[rr, hook] \arrow[rd]& &Q \arrow[ld]\\
&S&
\end{tikzcd}
\tag{$\mathrm{A}$}
\end{equation}
commutes and such that
$$\mathrm{Area}(Q \setminus R ) \le  b_S l(\partial R)$$
where $b_S > 0$ is a constant depending only on the surface $S$.
\end{theorem}

Surface groups are locally extended residually finite (LERF). This fact   was established by Scott in \cite{scott1978subgroups, scott1985correction}. It implies that any surface $R$ as above isometrically embeds in some \emph{finite} cover of $S$. Therefore Theorem \ref{thm:variant of LERF} can be regarded as a certain quantitative variant of the LERF property for surface groups. As such it is closely related  to and is  inspired by Patel's work \cite{patel2014theorem}. It is however crucial for our purposes  that the upper bound depends on $l(\partial R)$ rather then on $\mathrm{Area}(R)$.

A finitely presented group is   \emph{strictly stable in permutations} if it satisfies the    definition of flexible stability with $M$ being exactly equal to $N$. The methods we use towards proving Theorem \ref{thm:variant of LERF} will in general  increase the total  area. For this reason we are only able to  establish flexible rather than strict stability.

This note includes an appendix on Voronoi cells and Delaunay graphs constructed on "hyperbolic planes with singularities".

\subsection*{Soficity, stability and related works}
There has been a significant recent interest in the notion of sofic groups. Roughly speaking a group is sofic if it can be approximated   by almost-actions on finite sets, see e.g. \cite{weiss2000sofic, elek2006sofic} for formal definitions. Groups known to be sofic include amenable and residually finite ones. An outstanding open problem asks whether every group is sofic? 

A part of the interest in stability  stems from the observation made in \cite{glebsky2009almost} that a non  residually finite group which is stable  in permutations cannot be sofic. This observation extends to flexible stability.

%This makes negative stability results   particularly interesting. 

Groups that are currently known to be strictly stable in permutations   include finite groups by Glebsky and Rivera \cite{glebsky2009almost}, finitely generated abelian groups by Arzhantseva and P{\u{a}}unescu \cite{arzhantseva2015almost} and polycyclic and Baumslag--Solitar $\mathrm{BS}(1,n)$ groups by Becker, Lubotzky and Thom \cite{becker2018stability}.

%  established stability in additional cases including polycyclic-by-finite as well as certain Baumslag-Solitar groups.

 Becker and Lubotzky \cite{becker2018group}   proved that a group $G$  with  Kazhdan's property (T) is not strictly stable by removing   a single point from an action of  $G$ on a finite set. This strategy has led    Becker and Lubotzky to introduce the flexible notion of   stability in permutations  and  ask whether some Kazhdan groups might still be stable in the flexible sense. The question of strict and flexible stability  for surface groups is discussed in \cite{becker2018group} as well.

The importance of the question concerning flexible stability for some Kazhdan groups is highlighted by the recent  work of Bowen and Burton \cite{bowen2019flexible}. They prove the existence of a non-sofic group, conditioned on the assumption that the group $\mathrm{PSL}_d(\ZZ)$ is flexibly stable for some $d \ge 5$.

The fundamental group of a closed orientable surface with constant non-negative curvature is either finite or abelian and as such   stable respectively by \cite{glebsky2009almost} and \cite{arzhantseva2015almost}.   Our Theorem \ref{thm:main result} completes the picture for all closed orientable surfaces, at least as far as flexible stability is concerned.

Free non-abelian groups are clearly stable in a void sense. On the other hand there exist small cancellation hyperbolic groups that are not even flexibly stable \cite{becker2018group}. Interestingly, while hyperbolic  surface groups are   possibly the simplest  example of  non-free hyperbolic groups, the question of stability for these groups is not trivial.
To the best of the authors' knowledge it is not known whether surface groups are stable in permutations in the strict sense. The problem of adapting  our present approach to deal with this question seems challenging. 
%The notions of stability and flexible stability in permutations are rigorously defined in \S\ref{sec:*-covers and stability} below. 

Finally we point out that it was recently shown by Becker and Mosheiff \cite{becker2018abelian} that for the free abelian group $ \mathbb{Z}^d$ the parameter $\delta = \delta(\varepsilon)$ goes to zero at least as fast as a polynomial of degree $d$ in  $\varepsilon$. So in some quantitative sense hyperbolic surface groups are "more stable" than free abelian ones.

\subsection*{Acknowledgments}

The authors would like to thank  Goulnara Arzhantseva, Nir Avni, Oren Becker, Tsachik Gelander and Alex Lubotzky for sharing  their inspiring ideas and for useful discussions. \editA{The authors would like to thank the anonymous referee for their useful corrections and suggestions.}

%The second named author would like to thank Oren Becker for introducing him into and explaining in detail the notion of stability in permutations, Alex Lubotzky for useful and interesting discussions on stability as well as   posing the question about surface groups, Tsachik Gelander for suggesting that flexible rather than strict stability   is a  useful notion, and Nir Avni for a stimulating discussion on   stability of surface groups as part of the second named author visit  to Northwestern university.

%Our approach is purely geometric. Consider the surface group $G = \pi_1(S)$ where $S$ is any closed surface. We show that branched covers of $S$ are stable in a certain geometric sense defined below. This notion turns out to be equivalent to the group  $G$ being stable in permutations in the algebraic sense.

%\section{Stability in permutations}

\section{$*$-covers and geometric stability}
\label{sec:*-covers and stability}

We develop a geometric framework for the study of flexible stability of surface groups  in terms of branched covers of surfaces.

%We allow $S$ to be either with or without boundary.    and geodesic boundary. 

\subsection*{$*$-covers of surfaces}
Let $S$ be a  closed surface. Endow $S$ with a metric of constant sectional curvature.
Recall that a finite \emph{branched cover} of $S$ is a continuous surjection $p : X \to S$ where every point $b \in S$ admits a neighborhood $b \in V_b \subset S$ with $p^{-1}(V_b) = U_1 \sqcup \cdots \sqcup U_{n_b}$ and such that  $p : U_i \twoheadrightarrow
 V_b$ is   topologically conjugate to the complex map $z \mapsto z^{d}$ of some  degree  $d \in \mathbb{N}$ depending on $i$. The degree   is equal to one unless $b$ belongs to a finite subset   of $ S$ called the \emph{branch set}. %The subset  $p^{-1}(B)$ of $ X $ is the \emph{singular set} of the branched cover.

%If $S$ is orientable we choose an orientation.
%Fix an interior point $* \in S$ called the \emph{branch point}.

\begin{definition*}
A \emph{$*$-cover} of $S$ is a compact surface $X$ admitting a branched cover  $p : X \to S$ with  branch set   consisting of a single  branch point $* \in S$.
%If $S$ is oriented we assume that $S$ is given an orientation so that moreover assume that $
\end{definition*}

Let $p : X \to S$ be a $*$-cover. In general $X$   is not required to be connected. We pull back the metric from $S$ to every connected component of $X$ so that $p$ is a local isometry away from $p^{-1}(*)$. 

The total   angle $\alpha_x$ locally at any point  $x \in X$ is an integer multiple of $2\pi$. In fact $\alpha_x$ is equal to   $  2\pi d_x$ where  $d_x \in \NN$ is the \emph{degree} (or \emph{index}) of the point $x \in X$.

%  The \emph{index} of a point $x \in X$ is   $d_x = \frac{\alpha_x}{ 2\pi}$ where $\alpha_x$ is the  total angle at $x$.

The \emph{singular set} of $X$ is $s(X) = \{x \in X : d_x > 1\}$. Clearly $s(X) \subset p^{-1}(*)$ and in particular $s(X)$ is discrete. The \emph{branching degree} of $X$  is   $ \beta(X) = \sum_{x \in s(X)} (d_x-1)$. 
We say that the $*$-cover $X$ is \emph{unramified} if $s(X) = \emptyset$, or equivalently if $\beta(X) = 0$. This happens if and only if $p$ is a covering map. We emphasize that covers are not required  to be connected.

The \emph{degree} $|X|$ of the $*$-cover $p: X \to S$ is equal to $|p^{-1}(x)|$ for any $x \in S \setminus \{*\}$. 

\subsection*{Geometric stability} The geometric notion of stability is defined in terms of certain  graphs embedded into $*$-covers.
Recall that a \emph{graph} is a simplicial $1$-complex. 
%A \emph{cycle} is a graph homeomorphic to $S^1$. \Gamma^{(0)}
We let $\Gamma^{(0)}$ denote the vertex set of the graph $\Gamma$.

\begin{definition*}
A \emph{$*$-graph} on $X$ is  an embedded graph $\Gamma$   such that  $ \Gamma^{(0)} \subset p^{-1}(*)   $ and  the edges of $\Gamma$ are geodesic arcs.  %A \emph{branch cycle} on $X$ is a branch graph over  a cycle.
\end{definition*}

The fact that $\Gamma$ is embedded means that   two edges of $\Gamma$ may intersect only at the vertex set $\Gamma^{(0)}$.  If $\Gamma$ is a $*$-graph on $X$ then any closed curve $\gamma$ contained in $\Gamma$ is a piece-wise geodesic closed curve in $X$.  
Given a $*$-graph $ \Gamma $ let $l(\Gamma) $ denote the total length of all of its edges.

\begin{definition*}
The  $*$-cover $p : X \to S$ is  \emph{$\varepsilon$-reparable} if $X $   admits a   $*$-graph $\Gamma$  with  
$  l(\Gamma )  \le \varepsilon  \mathrm{Area}(X ) $
such that the complement  $X \setminus \Gamma$  isometrically embeds into a cover  $C$ of $S$ such that the diagram
\begin{equation}
\label{eq:embeds diagram}
  \begin{tikzcd}[column sep=tiny,row sep=small]
X \setminus \Gamma \arrow[rr, hook] \arrow[rd, "p"]& &C \arrow[ld]\\
&S&
\end{tikzcd}
\tag{$\mathrm{B}$}
\end{equation}
commutes and such that  
$$ \quad \mathrm{Area}(X) \le \mathrm{Area}(C) \le (1+\varepsilon)\mathrm{Area}(X).$$
%The $*$-covers $X_1$ and $X_2$ are \emph{$\varepsilon$-isomorphic} if moreover $|X_1| = |X_2|$.
\end{definition*}

We emphasize that the complement $X \setminus \Gamma$ as well as the cover $C$ are in general allowed to be   disconnected.

\begin{definition*}
The surface $S$ is  \emph{flexibly geometrically stable} 
 % (respectively \emph{flexibly stable}) 
  if for every $\varepsilon > 0$ there is some $\delta = \delta(\varepsilon) > 0$ such that any  $*$-cover $X$  with $\beta(X) < \delta \mathrm{Area}(X)$ is 
    $\varepsilon$-reparable.
\end{definition*}
Our main result can be reformulated in the language of $*$-covers.

\begin{theorem}
\label{thm:surface groups are stable}
Every closed  orientable hyperbolic surface  is flexibly geometrically stable.
\end{theorem}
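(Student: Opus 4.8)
The plan is to prove Theorem~\ref{thm:surface groups are stable} by combining the combinatorial/geometric decomposition machinery announced in the introduction with the quantitative LERF statement of Theorem~\ref{thm:variant of LERF}. Given $\varepsilon > 0$ we must produce $\delta > 0$ so that any $*$-cover $X$ with $\beta(X) < \delta \,\mathrm{Area}(X)$ is $\varepsilon$-reparable. So fix such an $X$. The first step is to build the cut graph: apply Theorem~\ref{thm:existance of cut graphs} to obtain an embedded $*$-graph $\Gamma \subset X$ with vertices at the singular points $s(X)$, with geodesic edges, such that every connected component of $X \setminus \Gamma$ has locally convex boundary and $l(\Gamma) \le C_S\, \beta(X)$ for a constant $C_S$ depending only on $S$ (here we use that $\Gamma$ sits inside the Delaunay graph on the singular points of $X$). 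Since cutting along $\Gamma$ traverses each edge at most twice, the total boundary length satisfies $l(\partial(X\setminus\Gamma)) \le 2\, l(\Gamma) \le 2 C_S \beta(X)$.

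The second step is to repair each component separately. Let $R_1,\dots,R_k$ be the connected components of $X \setminus \Gamma$. Each $R_i$ is a surface with locally convex boundary that is isometrically embedded in a cover of $S$ (namely in $X$ itself, away from the singular points, $p$ is a local isometry to $S$, so each $R_i$ inherits such an embedding). By Theorem~\ref{thm:variant of LERF} each $R_i$ embeds isometrically in a cover $Q_i$ of $S$ with $\mathrm{Area}(Q_i \setminus R_i) \le b_S\, l(\partial R_i)$. Taking the disjoint union $C = \bigsqcup_i Q_i$ (a cover of $S$, possibly disconnected, which is allowed) we get an isometric embedding of $X \setminus \Gamma = \bigsqcup_i R_i$ into $C$ with
$$\mathrm{Area}(C) - \mathrm{Area}(X\setminus\Gamma) = \sum_i \mathrm{Area}(Q_i \setminus R_i) \le b_S \sum_i l(\partial R_i) = b_S\, l(\partial(X\setminus\Gamma)) \le 2 b_S C_S\, \beta(X).$$
Since $\Gamma$ has measure zero, $\mathrm{Area}(X\setminus\Gamma) = \mathrm{Area}(X)$, so $\mathrm{Area}(X) \le \mathrm{Area}(C) \le \mathrm{Area}(X) + 2 b_S C_S\, \beta(X)$.

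The third step is the bookkeeping that turns the single hypothesis $\beta(X) < \delta\, \mathrm{Area}(X)$ into the two conclusions required for $\varepsilon$-reparability. For the length condition: $l(\Gamma) \le C_S \beta(X) < C_S \delta\, \mathrm{Area}(X)$, so we need $C_S \delta \le \varepsilon$. For the area condition: $\mathrm{Area}(C) \le \mathrm{Area}(X) + 2 b_S C_S \beta(X) < (1 + 2 b_S C_S \delta)\,\mathrm{Area}(X)$, so we need $2 b_S C_S \delta \le \varepsilon$. Thus choosing $\delta = \delta(\varepsilon) = \varepsilon / \max\{C_S,\, 2 b_S C_S\}$ (or any smaller value) makes $X$ $\varepsilon$-reparable, proving the theorem. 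I should also remark at this point that, tracing through the bound $l(\Gamma) \le C_S \beta(X)$ together with the explicit constant in Theorem~\ref{thm:variant of LERF}, one recovers the relation $\varepsilon \asymp \delta \ln(1/\delta)$ advertised after Theorem~\ref{thm:main result} — the logarithmic factor will come from the quantitative LERF step rather than from the linear bookkeeping here.

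The main obstacle is not in this assembly argument, which is essentially linear bookkeeping, but in the two ingredients it rests on: producing the cut graph $\Gamma$ with both the locally convex complementary pieces \emph{and} the length bound $l(\Gamma) \lesssim \beta(X)$ (Theorem~\ref{thm:existance of cut graphs}, via the Delaunay/Voronoi analysis on hyperbolic planes with cone singularities developed in the appendix), and the quantitative LERF embedding with area controlled by boundary length rather than by area (Theorem~\ref{thm:variant of LERF}). One subtlety to be careful about in the write-up is that the components $R_i$ must genuinely satisfy the hypotheses of Theorem~\ref{thm:variant of LERF} — in particular that "isometrically embedded in some cover of $S$'' is inherited correctly after cutting, and that local convexity of $\partial R_i$ is exactly what Theorem~\ref{thm:existance of cut graphs} delivers; a second subtlety is ensuring the disjoint union of the $Q_i$ is legitimately a cover of $S$ in the (disconnected) sense used throughout, which the paper has explicitly allowed.
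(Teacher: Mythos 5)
Your overall architecture matches the paper's exactly: construct a cut graph from Theorem~\ref{thm:existance of cut graphs}, verify the complementary pieces meet the hypotheses of Theorem~\ref{thm:variant of LERF} (the paper isolates this as Proposition~\ref{prop:cut surface embedds in a cover}), cap each piece off, form the disjoint union, and bookkeep. The assembly logic is sound and the subtleties you flag (inheritance of the embedding hypothesis, disconnected covers being allowed) are the right ones to worry about.

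There is, however, a genuine misstatement of the main input. Theorem~\ref{thm:existance of cut graphs} does \emph{not} supply a universal constant $C_S$ with $l(\Gamma)\le C_S\,\beta(X)$. What it actually says is: for every target $c>0$ there exists $\delta(c)>0$ so that $\beta(X)<\delta\,\mathrm{Area}(X)$ guarantees a $c$-cut graph, i.e.\ $l(\Gamma)\le c\,\mathrm{Area}(X)$. Tracing the Delaunay argument, the bound that emerges is of the shape
$l(\Gamma)\lesssim \beta(X)\,\arcsinh\bigl(\sqrt{\mathrm{Area}(X)/\beta(X)}\bigr)$,
which carries a logarithmic correction and is \emph{not} linear in $\beta(X)$; equivalently $c_\delta\asymp\delta\ln(1/\delta)$, so $c_\delta/\delta\to\infty$ as $\delta\to 0$. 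Your claimed linear bound would yield $\delta\asymp\varepsilon$, contradicting the relation $\varepsilon\asymp\delta\ln(1/\delta)$ the paper advertises; and your attribution of the logarithm to Theorem~\ref{thm:variant of LERF} is backwards, since that theorem gives a purely linear estimate $b_S\,l(\partial R)$. The logarithmic loss lives entirely in the cut-graph step. The fix is cosmetic: pick $c$ with $c\max\{1,2b_S\}\le\varepsilon$, take $\delta=\delta(c)$ from Theorem~\ref{thm:existance of cut graphs}, and your bookkeeping runs unchanged — which is precisely what the paper does (up to a factor of $2$ from $l(\partial C)\le 2\,l(\Gamma)$, which you in fact handle more carefully than the paper's displayed inequality).

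One smaller slip: you say each $R_i$ is "isometrically embedded in a cover of $S$ (namely in $X$ itself)." But $X$ is a branched cover, not a cover, so $X$ cannot serve as the witnessing cover. What is true is that $R_i$ avoids $s(X)$, so $p|_{R_i}$ is a local isometry, and $\partial R_i$ is locally convex; Lemma~\ref{lem:local isometry and locally convex boundary} then produces an honest cover into which $R_i$ embeds. The paper records this as Proposition~\ref{prop:cut surface embedds in a cover}, and it is a step that deserves its own sentence rather than a parenthesis.
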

In the remaining part of this section we show that geometric stability implies algebraic stability in permutations, see Proposition \ref{prop:geometric stability implies algebraic stability} below.
%\subsection*{Tessellation by polygons}

%\begin{definition*}
%The two $*$-covers $p_1 : X_1 \to S$ and $p_2 : X_2 \to S$ are  \emph{flexibly  $\epsilon$-isomorphic} if $$\max \{|X_1|, |X_2| \} \le (1+\varepsilon) \min \{|X_1|,|X_2|\}$$ 
%and the permutations $\pi_{X_1}(\sigma)$ and $\pi_{X_2}(\sigma)$ agree on subsets of size at least $$(1-\varepsilon)\min \{|X_1|,|X_2|\}$$   for every $\sigma\in\Sigma$.
%%The $*$-covers $X_1$ and $X_2$ are \emph{$\varepsilon$-isomorphic} if moreover $|X_1| = |X_2|$.
%\end{definition*}

%The constant $\varepsilon$ in the above depends on the choice of the generating set $\Sigma$.

\subsection*{Permutation representations and covering theory}

Consider the punctured surface $S \setminus \{*\}$. Fix a base point $x_0 \in S \setminus \{*\}$ and let $F = \pi_1(S\setminus\{*\}, x_0)$ so that $F$ is a free group. 
Note that $X \setminus p^{-1}(*) \to S \setminus \{*\}$ is a covering map of degree $|X|$. Enumerate  the fiber  $p^{-1}(x_0)$ by fixing an arbitrary identification  with the finite set $\{1,\ldots,|X|\}$. Covering space theory \cite[p. 68]{hatcher2002algebraic} shows how to associate  a natural permutation representation $\rho_X : F \to \mathrm{S }_{|X|}$ to the covering $X \setminus p^{-1}(*) \to S \setminus \{*\}$. 

%The permutation representation $\rho_X$ is well-defined up to an inner automorphism of the group $\mathrm{S}_{|X|}$ which depends on the fixed enumeration.

In what follows it is  convenient  to fix a specific presentation for the fundamental group of $S$. Since the notion of flexible stability is known to be independent of the chosen presentation we may do so without any loss of generality.

%In what follows it will be convenient to introduce a specific presentation. for $\pi_1(S,x_0)$. 
Let $\mathcal{P} $ be a   compact fundamental domain for the action of the fundamental group $\pi_1(S, x_0)$ on the universal cover $\widetilde{S}$. Assume that  $\mathcal{P}$ is a     polygon with finitely many geodesic sides and that the vertices of $\mathcal{P}$ are all lifts of the     point $ * \in S$. Moreover assume that the lift of the point $x_0$ lies in the interior of $\mathcal{P}$.
%identified under the action of $\pi_1(S)$ belong to a single orbit of $\pi_1(S)$ and Assume that the point $x_0$ has been chosen in such a way that the lift of $*$ to the closed fundamental domain $\mathcal{P}$ lies in its  interior.

Let $\Sigma$ be the finite generating set of $\pi_1(S,x_0)$ consisting of a single element $\sigma$ from any pair $\{\sigma, \sigma^{-1}\}   \subset \pi_1(S,x_0)$ such that  $\sigma \mathcal{P}\cap \mathcal{P}$ is a geodesic side of $\mathcal{P}$.  We may identify $F$ with the free group $F_\Sigma$ on the generators $\Sigma$. There is a natural quotient homomorphism $\pi_\Sigma : F_\Sigma \to \pi_1(S,x_0)$. The surface group $\pi_1(S,x_0)$ admits a presentation with generating set $\Sigma$ and a single relation $r \in F_\Sigma$. 

In particular we may let the fundamental domain $\mathcal{P}$   be a $4g$-sided polygon so that $\Sigma = \{a_1,b_1,\ldots,a_g,b_g\}$ and $r = \prod_{i=1}^g \left[a_i,b_i\right]$.

\begin{prop}
\label{prop:construction of *-cover from a homomorphism}
Given any $\rho \in \mathrm{Hom}(F_\Sigma, \mathrm{S}_N)$ with $N \in \mathbb{N}$ there exists a $*$-cover $X_\rho \to S$ with $|X_\rho| = N$ and $\rho_{X_\rho} = \rho$. Moreover 
$ \frac{\beta(X_\rho)}{N} \le   \mathrm{d}_N(\rho(r), \mathrm{id}_N)$ 
%\{ i\in \{1,\ldots, N\} \: : \: \rho(r)(i) \neq i \} |    $$
where $r$ is the defining relation of $\pi_1(S,x_0)$ as above.
\end{prop}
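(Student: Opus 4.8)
The plan is to build $X_\rho$ explicitly by gluing $N$ labelled copies of the fundamental polygon $\mathcal{P}$ according to $\rho$, and then to read off the branching degree from the cycle structure of $\rho(r)$. Concretely, I would take the disjoint union $\bigsqcup_{i=1}^{N}\mathcal{P}\times\{i\}$ of $N$ copies of the $4g$-gon $\mathcal{P}$. Recall that the $4g$ geodesic sides of $\mathcal{P}$ are identified in pairs by the side-pairing isometries attached to the generators $\sigma\in\Sigma$ when assembling $S$. For each generator $\sigma$ and each $i$, I would glue the two sides of $\mathcal{P}\times\{i\}$ and $\mathcal{P}\times\{\rho(\sigma)(i)\}$ paired by $\sigma$, using that same side-pairing isometry. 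Since each side of each copy is then glued to exactly one other side, the quotient $X_\rho$ is a compact surface, and the constant-curvature metrics on the copies of $\mathcal{P}$ descend to a metric that is smooth away from the images of the polygon vertices. The tautological map $p\colon X_\rho\to S$ collapsing each $\mathcal{P}\times\{i\}$ onto $S=\mathcal{P}/\!\sim$ is a local isometry, hence an honest covering map, over $S\setminus\{*\}$, while near each point of $p^{-1}(*)$ it has the local form $z\mapsto z^{d}$; thus $p$ is a branched cover with branch set contained in $\{*\}$, i.e.\ a $*$-cover. Because $x_0$ lies in the interior of $\mathcal{P}$, the fibre $p^{-1}(x_0)$ is precisely $\{x_0\}\times\{1,\ldots,N\}$, so $|X_\rho|=N$; and unwinding the side-pairings shows that, under the identification of $F$ with $F_\Sigma$, the monodromy of the covering $X_\rho\setminus p^{-1}(*)\to S\setminus\{*\}$ on this fibre is exactly $\rho$ — this is just the standard correspondence between finite covers and permutation representations, set up so as to recover $\rho$ rather than a conjugate or inverse. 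Hence $\rho_{X_\rho}=\rho$.

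Next I would bound $\beta(X_\rho)$. A small loop around $*$ in $S\setminus\{*\}$ represents $r$ up to conjugation and inversion, and since conjugation and inversion preserve cycle type, the points of $p^{-1}(*)$ are in bijection with the cycles of $\rho(r)$: a point $x$ over $*$ has index $d_x$ equal to the length of the corresponding cycle, since the small loop lifts near $x$ to a loop winding $d_x$ times around $x$. (As a consistency check, the total angles $2\pi d_x$ at the points of $p^{-1}(*)$ sum to $N$ times the total angle $2\pi$ at $*$, so $\sum_x d_x=N$.) Writing $c$ for the number of cycles of $\rho(r)$, we get $\beta(X_\rho)=\sum_{x\in p^{-1}(*)}(d_x-1)=N-c$. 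Each fixed point of $\rho(r)$ is a cycle of length one, so $c$ is at least the number of fixed points, and hence
\[
\beta(X_\rho)=N-c\ \le\ N-\bigl|\{\,i:\rho(r)(i)=i\,\}\bigr|\ =\ \bigl|\{\,i:\rho(r)(i)\neq i\,\}\bigr|\ =\ N\,\mathrm{d}_N(\rho(r),\mathrm{id}_N).
\]
Dividing by $N$ gives the asserted inequality $\beta(X_\rho)/N\le \mathrm{d}_N(\rho(r),\mathrm{id}_N)$.

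The only genuinely delicate point is the verification that $p$ is unramified away from $p^{-1}(*)$, that it has the stated local model $z\mapsto z^d$ at the singular points, and that the combinatorics of the gluing reproduce $\rho$ on the nose rather than a conjugate; all of this is an instance of the standard dictionary between degree-$N$ covers of a space and conjugacy classes of homomorphisms of its fundamental group into $\mathrm{S}_N$, applied to $S\setminus\{*\}$ and then completed across the puncture, so I do not anticipate any serious obstacle.
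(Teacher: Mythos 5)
Your proof is correct and takes essentially the same approach as the paper: identify the branch points of $X_\rho$ with the cycles of $\rho(r)$, observe that the local index at each branch point equals the corresponding cycle length, and conclude $\beta(X_\rho)=N-c\le N-|\mathrm{Fix}(\rho(r))|=N\,\mathrm{d}_N(\rho(r),\mathrm{id}_N)$. The only difference is presentational: you build the cover explicitly by gluing $N$ copies of the fundamental polygon $\mathcal{P}$ according to $\rho$, whereas the paper invokes the abstract correspondence between permutation representations and covers of $S\setminus\{*\}$ and then completes across the punctures, before reading off the cycle structure from the preimage of a simple closed curve representing $r$.
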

Note that $\mathrm{d}_N(\rho(r), \mathrm{id}_N)$ is equal to $1 - \frac{|\mathrm{Fix}(\rho(r))|}{N}$ where $\mathrm{Fix}(\rho(r))$ is the set of fixed points of the permutation $\rho(r) \in \mathrm{S}_N$.
\begin{proof}[Proof of Proposition \ref{prop:construction of *-cover from a homomorphism}]
%Recall that  $F_\Sigma$ is the fundamental group of the punctured surface $S\setminus\{*\}$. 
Let $X'_\rho$ be the punctured surface covering $S \setminus \{*\}$  that  corresponds to the permutation representation $\rho : F_\Sigma \to \mathrm{S}_N$.   Let $p : X_\rho \to S$ be the $*$-cover   obtained by   completing   $X'_\rho$ at the punctures. It is clear that $|X_\rho| = N$ and $\rho_{X_\rho}= \rho$. 

Consider the cycle decomposition $ o_1 \cdots o_m$ of the permutation $\rho(r)$ in its action on the  set $p^{-1}(x_0) \cong \{1,\ldots,N\}$. Let $l_i$ denote the length of the cycle $o_i$. We claim that without loss of generality $p^{-1}(*) = \{y_1,\ldots,y_m\}$ and $d_{y_i} = l_i$. This would imply the required upper bound on $\beta(X_\rho)$ as 
$$ \frac{\beta(X_\rho)}{N} = \frac{1}{N} \sum_{y\in p^{-1}(*)} (d_y-1) = \frac{1}{N} \sum_{i=1}^m (l_i -1) \le \mathrm{d}_N(\rho(r), \mathrm{id}_N).$$

Let $\gamma$ be a simple closed curve in $S \setminus \{*\}$ based at $x_0$ and representing the element $r$ of $F_\Sigma$. The preimage of the curve $\gamma$ in $X'_\rho$ is a disjoint union of simple closed curves.  Every such curve corresponds to an orbit $o_i$ of $\rho(r)$ in its action on $p^{-1}(x_0)$ and bounds a disc in $X_\rho$ that contains a single point $y_i$ from $p^{-1}(*)$ in its interior. Moreover the degree $d_{y_i}$  is equal to the size $l_i$ of the orbit $o_i$. The  claim follows.
%Let $\mathcal{P}$ be a closed convex polygon in $\widetilde{S}$ which is a fundamental domain for the action of $\pi_1(S)$ and such that the image of $\mathcal{P}$ in $S$ admits the point $*$ as its single vertex.  Consider the corresponding tessellation $\mathcal{T}_\rho = \mathcal{T}(X_\rho,\mathcal{P})$. Then $|\mathcal{T}_\rho| = N$ and the permutation representation $\rho$ defines an action on the set of faces of $\mathcal{T}_\rho$. 
%Every orbit of $\rho(r)$ corresponds to a vertex of the tessellation $\mathcal{T}_\rho$, namely a point of $p^{-1}(*)$, and in particular $m = |p^{-1}(*)|$. The required claim follows since the size of the link of every vertex $x$ of $\mathcal{T}_\rho$ determines its degree $d_x$.
\end{proof}

\subsection*{Geometric stability implies algebraic} We show  that geometric stability in the sense of Theorem \ref{thm:surface groups are stable} implies our main result Theorem \ref{thm:main result}. We continue using the presentation $\pi_1(S,x_0) \cong \left<\Sigma|r\right>$ constructed above and in particular the   polygon $\mathcal{P}$.

\begin{prop}
	\label{prop:distance between permutations separated by graph}
	Let $\Gamma$ be a  $*$-graph on $X$ with $N = |X|$. Assume that the complement $X \setminus \Gamma$ isometrically embeds into   some cover $C$ of $S$ of degree $M = |C|$ such that Diagram (\ref{eq:embeds diagram}) commutes.   Regard the two permutation representations $\rho_X$ and $\rho_C$  as mapping respectively into the   symmetric groups $\mathrm{S}_N$ and $\mathrm{S}_M$. 
 Then
$$ \mathrm{d}_{M}(\mathcal{E}_{N}^{M} (\rho_X), \rho_C) \le a_S \left(  \frac{l(\Gamma)+ \left(\mathrm{Area}(C) - \mathrm{Area}(X)\right)}{\mathrm{Area}(C)}\right)$$
where   $a_S > 0$ is a constant depending only on the surface $S$.
\end{prop}

Recall that the map $\mathcal{E}_{N}^{M} :  \mathrm{Hom}(F_\Sigma, \mathrm{S}_N) \to  \mathrm{Hom}(F_\Sigma, \mathrm{S}_M)$ is defined    by  extending a permutation representation to act trivially on the extra $M-N$ points.
\begin{proof}[Proof of Proposition \ref{prop:distance between permutations separated by graph}]
%Let $\mathcal{P}$ be a closed convex polygon in the universal cover $\widetilde{\mathcal{S}}$ which is a fundamental domain for the action of $\pi_1(S)$. 
The map $p : X \to S$ induces a tessellation $\mathcal{T} = \mathcal{T}(X, \mathcal{P})$ of the $*$-cover $X$   by $|X|$-many isometric copies of the polygon $\mathcal{P}$.  
\editA{We claim that every edge $\alpha$ of the graph $\Gamma$ satisfies
\begin{equation*}
|\{ \mathcal{D} \in \mathcal{T} \: : \:  \mathcal{D} 
 \cap \alpha  \not\subseteq p^{-1}(*) \} | \le a'_{S} l(\alpha).
 \end{equation*}
where $a'_{S} > 0$ is a constant depending only on the surface $S$,  the choice of the fundamental domain $\mathcal{P}$ and the particular generating set $\Sigma$.

To prove the claim fix an edge $\alpha$ of the graph $\Gamma$.  This edge is a geodesic segment in $X$ such that $\alpha \cap p^{-1}(*) = \partial \alpha$.
We want to bound the number of polygons $\mathcal D$ of the tessellation $\mathcal{T}$ that $\alpha$ meets in its interior. 

If $\alpha$ has a subsegment that coincides with a subsegment of an edge of the tessellation $\mathcal T$, then $\alpha$ equals that edge (since both of them are geodesics which end in $p^{-1}(*)$). In this case  $\alpha$ meets the two polygons incident to this edge, and its length is at least the length $\ell$ of the shortest edge of $\mathcal P$. Therefore, $\alpha$  satisfies the desired inequality if $a'_S \ge 2/\ell$.

Otherwise, $\alpha$ meets the edges of the tessellation $\mathcal T$ transversely. Consider how $\alpha$ is cut by the boundary edges of the tessellation $\mathcal T$ into geodesic segments $\alpha_1,\dots,\alpha_k$. The number of polygons that $\alpha$ meets is bounded by $k$. 

Note that, since $\alpha$ does not pass through the singular points of $X$, $p\circ \alpha$ is a geodesic in $S$ (which is not necessarily embedded).  Each of $p\circ \alpha_i$ is an embedded geodesic in $S$ whose boundary is on the edges of $\mathcal Q$, where $\mathcal Q$ is the polygon in $S$ to which $\mathcal P$ projects under the universal covering map.

Fix $\varepsilon>0$ smaller than half the injectivity radius of $S$.
There exists $\delta>0$ (depending on $\mathcal{P}$ and $\varepsilon$) such that each $p\circ \alpha_i$ either has length at least $\delta>0$ or is contained in the $\varepsilon$-neighborhood of $*$.
This shows in particular that $l(\alpha)\ge\delta$.
Moreover, since the edges of $\mathcal Q$ are geodesic, there exists $m$ such that any geodesic segment in $S$ of length $2\varepsilon$ meets the edges of $\mathcal Q$ at most $m$ times. 
% In fact, by choosing $\varepsilon$ small enough, one can take $m$ to be twice the number of edges of $\mathcal{Q}$.
% since the edges of $\mathcal Q$ are geodesic, any geodesic segment in $S$ of length $2\varepsilon$ can intersect each edge of $\mathcal Q$ at most once. Therefore, any segment of length $2\varepsilon$ meets the edges of $\mathcal Q$ at most $m$ times, where $m$ is the number of edges of $\mathcal Q$.
It follows that any $m+1$ consecutive segments in the sequence $p\circ \alpha_1,\dots,p\circ \alpha_k$ must leave the $\varepsilon$-neighborhood of $*$, and therefore  must have length at least $\delta$ altogether.
This shows that $\delta \frac{k}{2m+2} \le \max\{\delta, \delta \lfloor\frac{k}{m+1}\rfloor \}\le l(\alpha)$. So the edge $\alpha$  satisfies the desired inequality if $a'_S \ge \frac{2m+2}{\delta}$.
Choosing $a'_S = \max\{\frac{2}{\ell},\frac{2m+2}{\delta}\}$ completes the proof of the claim.
}

Taking into account the claim and summing  over all edges of the graph $\Gamma$  gives the estimate
$$ |\{ \mathcal{D} \in \mathcal{T} \: : \:  \mathcal{D} 
 \cap \Gamma  \not\subseteq p^{-1}(*) \} | \le a'_{S} l(\Gamma). $$
 
 Consider a point $x \in p^{-1}(x_0)$ and let $\mathcal{D} \in \mathcal{T}$ be the polygon containing $x$ in its interior. The fiber $p^{-1}(x_0)$ is clearly a subset of the fiber $q^{-1}(x_0)$ so that $x \in q^{-1}(x_0)$. Given a particular generator $\sigma \in \Sigma$ let $\mathcal{D}_\sigma \in \mathcal{T}$ be the polygon sharing with $\mathcal{D}$ the geodesic side corresponding to $\sigma$.
Then 
   $$\mathcal{E}_{N}^{M}(\rho_X)(\sigma)(x) = \rho_C(\sigma)(x)$$
    %holds for a particular  generator $\sigma \in \Sigma$ 
    provided that the $*$-graph $\Gamma$ does not intersect $(\mathcal{D} \cup \mathcal{D}_\sigma)\setminus p^{-1}(*)$.
The Hamming metric between  $\mathcal{E}_{N}^{M} (\rho_X)$ and $\rho_C$ is therefore bounded above by
 $$ \mathrm{d}_{M}(\mathcal{E}_{N}^{M}(\rho_X),\rho_C) \le \frac{2|\Sigma| }{M}\left( a'_{S} l(\Gamma) +  |q^{-1}(x_0) \setminus p^{-1}(x_0)| \right) $$
 and the proof follows for an appropriate choice of the constant $a_S > 0$.
\end{proof}

\begin{prop}
%\marginpar{Is the converse to   Proposition \ref{prop:geometric stability implies algebraic stability} also true (in some sense, with the correct definitions)?}
\label{prop:geometric stability implies algebraic stability}
Let $S$ be a closed hyperbolic surface. If $S$ is  flexibly geometrically stable then its fundamental group $\pi_1(S, x_0)$ is flexibly  stable in permutations.
\end{prop}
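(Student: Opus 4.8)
The plan is to translate a near-solution $\rho$ of the relation $r$ into a $*$-cover via Proposition \ref{prop:construction of *-cover from a homomorphism}, repair it using flexible geometric stability, and then read off an honest permutation representation from the repaired cover using Proposition \ref{prop:distance between permutations separated by graph}. The only bookkeeping is to keep track of how the geometric parameter $\varepsilon$ and the algebraic parameter $\varepsilon$ relate, and to convert between "branching degree versus area" and "Hamming distance of $\rho(r)$ to the identity".

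\begin{proof}[Proof of Proposition \ref{prop:geometric stability implies algebraic stability}]
Fix $\e > 0$. We will produce a $\delta = \delta(\e) > 0$ witnessing flexible stability in permutations for the presentation $\pi_1(S,x_0) \cong \langle \Sigma \mid r\rangle$ fixed above; since flexible stability is independent of the presentation this suffices.

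Let $a_S > 0$ be the constant from Proposition \ref{prop:distance between permutations separated by graph}. Choose $\e_0 = \e_0(\e) > 0$ small enough that $2 a_S \e_0 < \e$ and that $(1+\e_0) \le (1+\e)$; concretely $\e_0 = \min\{\e/(2a_S),\, \e\}$ will do. Since $S$ is flexibly geometrically stable there is $\delta_0 = \delta_0(\e_0) > 0$ so that any $*$-cover $X$ with $\beta(X) < \delta_0 \,\mathrm{Area}(X)$ is $\e_0$-reparable. Set $\delta = \delta_0$.

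Now let $N \in \NN$ and $\rho \in \mathrm{Hom}(F_\Sigma, \mathrm{S}_N)$ satisfy $\mathrm{d}_N(\rho(r), \mathrm{id}_N) < \delta$. Apply Proposition \ref{prop:construction of *-cover from a homomorphism} to obtain a $*$-cover $p : X_\rho \to S$ with $|X_\rho| = N$, $\rho_{X_\rho} = \rho$ and $\beta(X_\rho)/N \le \mathrm{d}_N(\rho(r),\mathrm{id}_N) < \delta$. Since $\mathrm{Area}(X_\rho) = N \cdot \mathrm{Area}(S)$ we have $\beta(X_\rho) < \delta \, \mathrm{Area}(X_\rho)/\mathrm{Area}(S) $, which after adjusting the choice of $\delta$ by the fixed factor $\mathrm{Area}(S)$ gives $\beta(X_\rho) < \delta_0\,\mathrm{Area}(X_\rho)$. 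Hence $X_\rho$ is $\e_0$-reparable: there is a $*$-graph $\Gamma$ on $X_\rho$ with $l(\Gamma) \le \e_0 \,\mathrm{Area}(X_\rho)$ such that $X_\rho \setminus \Gamma$ isometrically embeds in a cover $C$ of $S$ with $\mathrm{Area}(X_\rho) \le \mathrm{Area}(C) \le (1+\e_0)\mathrm{Area}(X_\rho)$. Let $M = |C| = \mathrm{Area}(C)/\mathrm{Area}(S)$, so that $N \le M \le (1+\e_0) N \le (1+\e) N$.

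Set $\rho' = \rho_C \in \mathrm{Hom}(F_\Sigma, \mathrm{S}_M)$. This factors through $\pi_\Sigma$ because $C$ is a genuine (unramified) cover of $S$, so the monodromy of every loop representing the trivial element of $\pi_1(S,x_0)$ — in particular $r$ — acts trivially; equivalently $\rho_C(r) = \mathrm{id}_M$. Finally Proposition \ref{prop:distance between permutations separated by graph} gives
$$ \mathrm{d}_M\big(\mathcal{E}_N^M(\rho_X), \rho_C\big) \le a_S\left( \frac{l(\Gamma) + (\mathrm{Area}(C) - \mathrm{Area}(X_\rho))}{\mathrm{Area}(C)}\right) \le a_S\left( \frac{\e_0\,\mathrm{Area}(X_\rho) + \e_0\,\mathrm{Area}(X_\rho)}{\mathrm{Area}(X_\rho)}\right) = 2 a_S \e_0 < \e, $$
using $\mathrm{Area}(C) \ge \mathrm{Area}(X_\rho)$ in the denominator and $\mathrm{Area}(C) - \mathrm{Area}(X_\rho) \le \e_0 \mathrm{Area}(X_\rho)$ and $l(\Gamma) \le \e_0 \mathrm{Area}(X_\rho)$ in the numerator. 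Since $\mathcal{E}_N^M(\rho_X) = \mathcal{E}_N^M(\rho)$, the homomorphism $\rho'$ has all the required properties, completing the proof.
\end{proof}

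The main point to get right — the part I expect to be the only genuine obstacle — is the constant bookkeeping: one must absorb the fixed factor $\mathrm{Area}(S)$ relating $\beta(X)/N$ to $\beta(X)/\mathrm{Area}(X)$ when feeding the hypothesis into the definition of flexible geometric stability, and one must verify that $\rho_C$ honestly factors through $\pi_\Sigma$, i.e. that the monodromy representation of an unramified cover kills the relator $r$. Everything else is a direct concatenation of the three earlier propositions.
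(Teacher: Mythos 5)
Your proof is correct and takes essentially the same route as the paper: build $X_\rho$ via Proposition \ref{prop:construction of *-cover from a homomorphism}, repair it using flexible geometric stability, and read off $\rho_C$ via Proposition \ref{prop:distance between permutations separated by graph}, with the same $\e_0 = \min\{\e,\,\e/(2a_S)\}$ reduction. The one place you were more careful than the paper is in converting between $\beta(X_\rho)/N$ and $\beta(X_\rho)/\mathrm{Area}(X_\rho)$; note that since $\mathrm{Area}(S) = 4\pi(g-1) \ge 4\pi > 1$, the inequality $\beta(X_\rho) < \delta N$ already implies $\beta(X_\rho) < \delta\,\mathrm{Area}(X_\rho)$ directly, so no rescaling of $\delta$ is actually needed, which is why the paper silently elides it.
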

\begin{proof}
%Let $\pi_1(S)$ = \left<\Sigma|r\right>$ be a presentation of the fundamental group
The algebraic property of   flexible stability in permutations does not depend on the choice of a particular presentation. This   is proved in \cite{arzhantseva2015almost}  for the strict notion of stability  using on the fact that any two finite presentations for $G$ are related by a finite sequence of Tietze transformations \cite[II.2.1]{lyndon2015combinatorial}. Exactly the same argument goes through  in the flexible case  as well.
It will be convenient to verify the definition with respect to the presentation $\pi_1(S,x_0) \cong \left<\Sigma|r\right>$.

%Fix a finite generating set $\Sigma$ for $\pi_1(S)$ and let $F_\Sigma$ be the free group on $\Sigma$ with quotient map $\pi_\Sigma : F_\Sigma \to \pi_1(S)$. 

Let $\varepsilon > 0$ be given. Denote $\varepsilon' = \min\{ \varepsilon, \frac{\varepsilon}{2a_S}\}$. According to our assumption there is some $\delta = \delta(\varepsilon')$ such that any $*$-cover $X$ with $\beta(X) < \delta \mathrm{Area}(X)$ is $\varepsilon'$-reparable. We claim that the definition of flexible stability in permutations for the group $\pi_1(S,x_0)$ is satisfied with respect to the given $\varepsilon > 0$ and this particular $\delta$.

Consider some $N \in \mathbb{N}$ and  $\rho \in \mathrm{Hom}(F_\Sigma, \mathrm{S}_N)$ with $\mathrm{d}_N(\rho(r), \mathrm{id}_N) < \delta$. Making use of Proposition \ref{prop:construction of *-cover from a homomorphism} we find a $*$-cover $X_\rho \to S$ with $\beta(X_\rho) < \delta N$ and $\rho_{X_\rho}= \rho$. Therefore $X_\rho$ is $\varepsilon'$-reparable which means it admits an embedded  $*$-graph $\Gamma$ with  
$  l(\Gamma )  \le \varepsilon'  \mathrm{Area}(X_\rho) $
and such that the complement  $X_\rho \setminus \Gamma$ isometrically  embeds into some  cover $C$ of $S$ making Diagram (\ref{eq:embeds diagram}) commute and satisfying  
$$\mathrm{Area}(X_\rho) \le \mathrm{Area}(C) \le (1+\varepsilon')\mathrm{Area}(X_\rho).$$ 

Consider the permutation representation $\rho' = \rho_C$ so that $\rho' \in \mathrm{Hom}(F_\Sigma, \mathrm{S}_{M})$ where $M = |C|$. Since $C$ is unramified the homomorphism $\rho'$ factors through $\pi_\Sigma : F_\Sigma \to \pi_1(S, x_0)$. Roughly speaking, the two permutations representations $\rho$ and $\rho'$ agree on points lying on the subsurface $X \setminus \Gamma$ of $C$ away from the $*$-graph $\Gamma$. To be precise it follows from Proposition \ref{prop:distance between permutations separated by graph} that 
$$\mathrm{d}_{M}(\mathcal{E}_N^M (\rho), \rho') \le a_S \left( \frac{l(\Gamma) + \left( \mathrm{Area}(C) - \mathrm{Area}(X_\rho)\right)}{\mathrm{Area}(C)}\right) \le 2 a_S \varepsilon' \le \varepsilon.$$
Finally the degree $M$  satisfies
$$N \le M \le (1+\varepsilon')N \le (1+\varepsilon)N$$ 
as required.
% provided $\delta > 0$ is sufficiently small so that $2a_S \varepsilon' \le \varepsilon$.
\end{proof}

The remainder of this work is   dedicated to  establishing Theorem \ref{thm:surface groups are stable}.

\section{Hyperbolic planes with singularities}
\label{sec:hyperbolic planes with singularity}

Let $S$ be a compact hyperbolic surface and consider a fixed  $*$-cover $p : X \to S$. 
We discuss the geometry of the universal cover of $X$.  We then discuss the Voronoi tessellation and its dual the  Delaunay graph. This graph   will be used in Section \ref{sec:cut graphs}   to construct the $*$-graph as required in the definition of flexible geometric stability.

\subsection*{The geometry of the universal  cover of $X$} Let $q : \widetilde{X} \to X$ denote the universal cover of $X$ equipped with the pullback length metric $d_{\widetilde{X}}$. Topologically speaking $\widetilde{X}$ is homeomorphic to the plane. 

The singular set of $\widetilde{X}$ is  $s(\widetilde{X}) = q^{-1}(s(X))$. The   space  $\widetilde{X}$   is locally isometric to the hyperbolic plane $\mathbb{H}$ away from its singular set.  The group $\pi_1(X)$ acts freely on $\widetilde{X}$ admitting the surface $X$ as a quotient. The singular set  $s(\widetilde{X})$ is discrete in $\widetilde{X}$. In fact $s(\widetilde{X})$   is   co-bounded provided $s(X)$  is not empty. 

Recall that the Cartan--Hadamard theorem admits a  generalization due to Gromov   to complete geodesic metric spaces. See  \cite[Theorem II.4.1]{bridson2013metric} for reference.  This result  implies that   $\widetilde{X}$ is a  $\mathrm{CAT}(-1)$-space. 
In particular $\widetilde{X}$ is \emph{uniquely geodesic} 
%, geodesics vary continuously with respect to endpoints, 
and every  \emph{local geodesic in $\widetilde{X}$ is a geodesic}. 

Let $\gamma$ be a continuous path in $\widetilde{X}$. The path  $\gamma$   is a \emph{local geodesic} provided it is a local geodesic in the  sense of hyperbolic geometry away from the singular set $s(\widetilde{X})$ and its angle $\theta_x$ at every singular point $x \in s(\widetilde{X})$ along $\gamma$ satisfies $\pi \le \theta_x \le \alpha_x - \pi$ where $\alpha_x$ is the local angle at $x$.
This local characterization  implies that   $\widetilde{X}$ is \emph{geodesically complete} in the sense that  any geodesic segment can   be extended to a bi-infinite geodesic line. The geodesic extension  need not be unique since a geodesic segment terminating at a singular point admits many extensions.

\subsection*{Convex subsets of the universal cover $\widetilde{X}$}
Recall the following useful consequence of the Cartan--Hadamard theorem \cite[II.4.13 and II.4.14]{bridson2013metric}.

\begin{lemma}
%	\marginpar{Yair, you had an idea to simplify Proposition \ref{prop:cut surface embedds in a cover}, however this fact from B-H seems more general and easier.}
	\label{lem:lifting local isometry}
	Let $N_1$ and $N_2$ be connected complete non-positively curved metric spaces and  $f : N_1 \to N_2$ be a local isometry.  Then $f_* : \pi_1(N_1) \to \pi_1(N_2)$ is injective and any lift $F : \widetilde{N}_1 \to \widetilde{N}_2$ of the map $f$ is an isometric embedding.
\end{lemma}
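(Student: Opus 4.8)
The statement to prove is Lemma~\ref{lem:lifting local isometry}, a standard consequence of the Cartan--Hadamard theorem. Let me plan a proof.

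The plan is to reduce the statement to the fixed-point-free action / covering space picture and then invoke the standard properties of $\mathrm{CAT}(0)$ (or non-positively curved) spaces.

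First I would fix universal covers $q_i : \widetilde N_i \to N_i$ and base points. Given the local isometry $f : N_1 \to N_2$, I would use the lifting criterion for covering spaces together with the fact that $\widetilde N_1$ is simply connected: the composite $f \circ q_1 : \widetilde N_1 \to N_2$ lifts to a map $F : \widetilde N_1 \to \widetilde N_2$ with $q_2 \circ F = f \circ q_1$. Because $f$ and $q_1, q_2$ are all local isometries, $F$ is a local isometry as well.

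Second, the heart of the matter: a local isometry between complete, simply connected, non-positively curved (i.e.\ $\mathrm{CAT}(0)$ by Cartan--Hadamard) spaces is a global isometric embedding. Here I would argue as follows. Any local geodesic in a $\mathrm{CAT}(0)$ space is a (globally minimizing) geodesic; since $F$ is a local isometry it sends geodesics of $\widetilde N_1$ to local geodesics of $\widetilde N_2$, hence to geodesics. Given two points $x, y \in \widetilde N_1$, let $\gamma$ be the geodesic joining them; then $F\circ\gamma$ is a geodesic of the same length in $\widetilde N_2$, so $d(Fx, Fy) = d(x,y)$. This gives that $F$ is distance-preserving, in particular injective, i.e.\ an isometric embedding onto its image.

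Third, injectivity of $f_*$ on $\pi_1$: if $f_*$ killed a nontrivial loop class $[\alpha] \in \pi_1(N_1)$, choose the corresponding deck transformation $g$ of $\widetilde N_1$, which is fixed-point-free; the relation $q_2 \circ F = f\circ q_1$ together with $f_*([\alpha]) = 1$ forces $F\circ g = F$ on $\widetilde N_1$, contradicting injectivity of $F$ (since $g$ is nontrivial). Hence $f_*$ is injective. The one point requiring a little care — and the place I'd expect the only real friction — is making sure the spaces in question genuinely satisfy the hypotheses needed to apply Cartan--Hadamard in the generalized Gromov form (complete, geodesic, locally $\mathrm{CAT}(0)$), so that ``non-positively curved'' upgrades to global $\mathrm{CAT}(0)$ and the ``local geodesics are geodesics'' principle applies; but for the spaces $\widetilde N_i$ arising here this is exactly the content of \cite[II.4.1]{bridson2013metric} that the excerpt has already invoked, and indeed the statement itself is quoted from \cite[II.4.13 and II.4.14]{bridson2013metric}, so I would simply cite those results rather than reprove them.
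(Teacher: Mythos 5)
The paper does not prove this lemma at all: it is stated as ``a useful consequence of the Cartan--Hadamard theorem'' and immediately referred to \cite[II.4.13 and II.4.14]{bridson2013metric}, so there is no in-paper argument to compare against. Your sketch is a correct account of the standard proof behind those references: lift $f$ to a local isometry $F\colon\widetilde N_1\to\widetilde N_2$, invoke Cartan--Hadamard to make both universal covers globally $\mathrm{CAT}(0)$, observe that $F$ carries geodesics to local geodesics and that local geodesics in a $\mathrm{CAT}(0)$ space are minimizing (so $F$ preserves distances and is injective), and then deduce injectivity of $f_*$ from the equivariance $F\circ g = h(g)\circ F$ together with injectivity of $F$ and freeness of the deck action. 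The only step worth stating slightly more explicitly is that $F\circ g = F$ when $f_*([\alpha])=1$ follows from this equivariance relation (both $F\circ g$ and $F$ lift $f\circ q_1$, so they differ by the deck transformation $h(g)$ of $\widetilde N_2$, which is trivial by hypothesis), but you clearly have this in mind and there is no gap.
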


Here is an example of a straightforward application of Lemma \ref{lem:lifting local isometry}.

\begin{cor}
\label{cor:convex subset embeds into H}
Let $C \subset \widetilde{X}$ be a convex subset such that $\mathring{C} \cap s(\widetilde{X}) = \emptyset$. Then $C$ is isometric to a contractible subset of the hyperbolic plane $\mathbb{H}$.
\end{cor}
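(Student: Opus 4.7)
The plan is to construct an isometric embedding $\phi : C \to \mathbb{H}$ by developing $C$ into the hyperbolic plane from a base point in $\mathring C$, and to conclude either by a direct $\mathrm{CAT}(-1)$ comparison argument or by an appeal to Lemma \ref{lem:lifting local isometry}. Observe first that $C$ is automatically contractible: as a convex subset of the uniquely geodesic space $\widetilde X$, it retracts along geodesics onto any chosen base point in $C$.

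If $\mathring C = \emptyset$, then by convexity and unique geodesicity $C$ is either a single point or a geodesic segment, and such a set trivially embeds isometrically into $\mathbb{H}$. Otherwise fix $p_0 \in \mathring C$ and a small ball $B(p_0, r) \subset \mathring C$; since $\mathring C$ avoids $s(\widetilde X)$, this ball is isometric to some $B(q_0, r) \subset \mathbb{H}$ via an isometry $\phi_0$. Extend $\phi_0$ radially to a map $\phi : C \to \mathbb{H}$: for each $p \in C$ let $\gamma_p$ be the unique geodesic in $\widetilde X$ from $p_0$ to $p$, which lies in $C$ by convexity, and set
$$
\phi(p) \;=\; \exp_{q_0}\!\bigl( d_{\widetilde X}(p_0, p)\, \cdot\, d\phi_0(\dot\gamma_p(0)) \bigr).
$$

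The substantive part is to verify that $\phi$ is distance-preserving. The inequality $d_{\widetilde X}(p_1, p_2) \le d_{\mathbb{H}}(\phi(p_1), \phi(p_2))$ is immediate from the $\mathrm{CAT}(-1)$ inequality, since by construction $\phi(p_0)\phi(p_1)\phi(p_2)$ is the $\mathbb{H}$-comparison triangle for $\triangle(p_0, p_1, p_2) \subset \widetilde X$. For the reverse direction I would argue that the geodesic $[p_1, p_2] \subset C$ has its interior contained in $\mathring C$ — using that $\mathring C$ is itself convex, which follows from continuity of geodesics in a $\mathrm{CAT}(0)$ space combined with a short retraction argument — and therefore avoids $s(\widetilde X)$; along such a geodesic the developing map $\phi$ is locally a hyperbolic isometry and preserves length. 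A cleaner alternative is to notice that $\phi$ is a local isometry between complete non-positively curved spaces and apply Lemma \ref{lem:lifting local isometry}, using that $C$ is simply connected to upgrade the local isometry to an isometric embedding. Either way, $\phi(C)$ is convex in $\mathbb{H}$ (since geodesics go to geodesics), hence contractible. The delicate point, which I expect to be the main obstacle, is ruling out the pathological scenario in which a geodesic inside $C$ is forced through a singular point of $\widetilde X$ lying on $\partial C$; controlling this requires combining the $\mathrm{CAT}(-1)$ structure with the local cone geometry at singular points to ensure that no such geodesic can exist, so that $\phi$ is genuinely a local isometry across all of $C$.
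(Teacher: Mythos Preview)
Your ``cleaner alternative'' is almost the paper's proof, but you are working harder than necessary by first building a developing map $\phi:C\to\mathbb{H}$ and only then invoking Lemma~\ref{lem:lifting local isometry}. The paper skips the construction of $\phi$ entirely: it applies Lemma~\ref{lem:lifting local isometry} directly with $N_1=C$, $N_2=S$, and $f=(p\circ q)|_C$. Since $C$ is convex it is $\mathrm{CAT}(-1)$ in its own right and simply connected, so $\widetilde C=C$; the lemma then hands you an isometric embedding $F:C\to\widetilde S=\mathbb{H}$ for free. No radial exponential map, no comparison triangles, no need to track what happens to $[p_1,p_2]$.

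This also dissolves the ``delicate point'' you flag at the end. The worry is whether the relevant map is a local isometry at a singular point $v\in\partial C$. For $(p\circ q)|_C$ this is immediate from convexity: the link of $v$ in $C$ is an arc of length at most $\pi$ (any two directions into a convex set make an angle $\le\pi$), and the branched-cover map $z\mapsto z^{d_v}$ restricted to a sector of angle $\le\pi<2\pi$ is an isometry onto its image. So $(p\circ q)|_C$ is a local isometry everywhere on $C$, singular boundary points included, with no further analysis. By contrast, in your direct approach the claim that the interior of $[p_1,p_2]$ lies in $\mathring C$ is false as stated (take $C$ a half-space with $p_1,p_2\in\partial C$), and patching this would force you to confront exactly the cone-point issue you left open.
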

\begin{proof}
Since $C$ is a convex subset of $\widetilde{X}$ it is a $\mathrm{CAT}(-1)$-space in its own right. The corollary follows by applying Lemma \ref{lem:lifting local isometry} with $N_1 = C, N_2 = S$ and $f = p \circ q$. The lift $F$ gives the required isometric embedding into $\mathbb{H}$.
\end{proof}

Let $\gamma$ be a bi-infinite geodesic path in $\widetilde{X}$. A \emph{half-space} in $\widetilde{X}$ is  the closure of a connected component of $ \widetilde{X} \setminus \gamma$. Note that a half-space is convex.

\subsection*{Voronoi cells and the Delaunay graph}

Assume that the singular set $s(X)$ is non-empty. We   define the Voronoi cells and the Delaunay graph with respect to the set of points $s(\widetilde{X})$. These are natural generalizations of the parallel notions in the classical Euclidean and hyperbolic cases.

\begin{definition*}
\label{def:Voronoi cell}
The \emph{Voronoi cell} $A_v$ at the singular point $v \in s(\widetilde{X})$ is 
$$ A_v = \{x \in \widetilde{X}\: : \: d_{\widetilde{X}}(x,v) \le d_{\widetilde{X}}(x,u)\quad \forall u \in s(\widetilde{X})   \}.$$
\end{definition*}

The family of Voronoi cells is equivariant in the sense that $g A_v = A_{gv}$ for every $g \in \pi_1(X)$ and $v \in s(\widetilde{X})$. 

We remark that  it is not a priori clear that the Voronoi cells form a tessellation. This turns out to be true   and will be   established as a consequence of Proposition \ref{prop:two Voronoi cells are disjoint or share a common side} below. The  difficulty has to do with the fact that for two singular points $v,u \in s(\widetilde{X})$ the intersection of the two sets   $\{x \in \widetilde{X} :  d_{\widetilde{X}}(x,v) \le d_{\widetilde{X}}(x,u)\}$ and  $ \{x \in \widetilde{X}:  d_{\widetilde{X}}(x,v) \ge d_{\widetilde{X}}(x,u) \}$   might  have a non-empty interior in general.

\begin{definition*}
\label{def:Delaunay graph}
The vertex set of the   \emph{Delaunay graph}  $D$ is the singular set $s(\widetilde{X})$. Two vertices $v_1$ and $v_2$ of the Delaunay graph $ D$ span an edge  whenever there is a closed metric ball $B \subset \widetilde{X}$ with $\mathring{B} \cap s(\widetilde{X}) = \emptyset$ and $ B \cap s(\widetilde{X}) = \{v_1, v_2\}$. 
\end{definition*}

%It is clear that the Delaunay graph is $\pi_1(X)$-invariant. 

%Note that if $v$ and $u$ are two adjacent vertices of $D$ then the center of ball $B$ as above belongs to $m(v,u)$.
We summarize a  few  basic properties of   Voronoi cells and their relationship with    the Delaunay graph.   These are quite elementary in the classical  Euclidean or hyperbolic cases.   Extra caution is required     in the presence of singular points --- some of the following statements are false unless the Voronoi cells  are taken with respect to a set of points containing all   singularities, which is always the case here.

\medskip

\textit{The   proofs of  Propositions \ref{prop:properties of Voronoi and Delaunay}, \ref{prop:two Voronoi cells are disjoint or share a common side}, \ref{prop:Delaunay graph is embedded} and \ref{prop:the angle between two edges of Delaunay graph is at most pi} are postponed to Appendix  \ref{sec:properties of Voronoi and Delaunay}  dedicated to this topic. The proofs  are similar to the classical case by repeatedly  relying on Corollary \ref{cor:convex subset embeds into H} to embed the relevant local picture into the hyperbolic plane.}

\medskip

\begin{prop}
\label{prop:properties of Voronoi and Delaunay}
Let  $v \in s(\widetilde{X})$ be a vertex of the Delaunay graph $D$. Consider  the Voronoi cell $A_v$. Then
\begin{enumerate}
\item \label{it:Voronoi cell is homeomorphic to a disc}
$A_v$ is  homeomorphic to a closed disc and   $A_v \cap s(\widetilde{X}) = \{v\}$, 
\item \label{it:Voronoi has pw geodesic boundary}
$A_v$ is   convex and the boundary $\partial A_v$ is   piecewise geodesic, and
\item \label{it:Voronoi defined in terms of neighbors}
$A_v$ is equal to the intersection of the sets 
  $\{x \in \widetilde{X} :  d_{\widetilde{X}}(x,v) \le d_{\widetilde{X}}(x,u) \}$ where $u \in s(\widetilde{X})$ ranges over the vertices adjacent to  $v$ in the graph $D$.
\end{enumerate}
\end{prop}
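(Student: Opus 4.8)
The three assertions are intertwined, so I would prove them together, deducing everything from the single principle that any convex region of $\widetilde X$ missing the singular set in its interior embeds isometrically into $\mathbb H$ (Corollary \ref{cor:convex subset embeds into H}). The plan is as follows. First I would record the elementary observation that $A_v$ is an intersection of half-spaces, and hence convex. Indeed, for each $u \in s(\widetilde X)$ with $u\neq v$, pick a geodesic $[v,u]$; by geodesic completeness and the $\mathrm{CAT}(-1)$ property, the perpendicular bisector of $[v,u]$ --- meaning the locus of points equidistant from $v$ and $u$ that lies ``on the bisecting geodesic'' --- need not a priori be a single geodesic line in $\widetilde X$, which is exactly the subtlety flagged before Proposition \ref{prop:two Voronoi cells are disjoint or share a common side}. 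So instead I would argue directly: the function $x\mapsto d_{\widetilde X}(x,v)$ is convex on $\widetilde X$ (a standard consequence of $\mathrm{CAT}(0)$), and likewise $x \mapsto d_{\widetilde X}(x,u)$; thus the sublevel condition $d_{\widetilde X}(\cdot,v)\le d_{\widetilde X}(\cdot,u)$ need not be convex in general, but its intersection over all $u$, namely $A_v$, can be shown convex by the following trick. If $x,y \in A_v$ and $z$ lies on the geodesic $[x,y]$, then $d_{\widetilde X}(z,v)\le \max\{d_{\widetilde X}(x,v),d_{\widetilde X}(y,v)\}$ by convexity of $d(\cdot,v)$; and for any $u$, $d_{\widetilde X}(z,u)\ge $ ... here one needs a reverse inequality, which fails pointwise. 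So this naive route does not close, and I would instead follow the classical Voronoi argument transported through Corollary \ref{cor:convex subset embeds into H}.

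\textbf{The route through embeddings into $\mathbb H$.} The cleaner approach: fix $v$ and let $r_0 = \tfrac12\inf\{d_{\widetilde X}(v,u) : u \in s(\widetilde X),\ u\neq v\}$, which is positive since $s(\widetilde X)$ is discrete. Then the open ball $B(v,r_0)$ contains no singular point other than $v$ in its interior only if $v$ is non-singular --- but $v$ \emph{is} singular, so I must be more careful: the relevant convex set to embed is not a ball around $v$. Instead, take any point $x \in A_v$ and the geodesic $[v,x]$; since $x$ is a nearest singular point to every point of $[v,x]$ (an easy consequence of the definition of $A_v$ together with $\mathrm{CAT}(0)$ convexity of distance: if some $z \in [v,x]$ were strictly closer to $u$ than to $v$, then so would be $x$ by the triangle inequality combined with convexity), the geodesic $[v,x]$ lies in $A_v$. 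Hence $A_v$ is star-shaped about $v$, in particular contractible, and $A_v\cap s(\widetilde X)=\{v\}$. Now excise a tiny cone at $v$: for small $\eta$, the set $A_v \setminus B(v,\eta)$ is convex (star-shapedness plus the geodesic from any boundary-of-cone point passing through $v$) and misses the singular set in its interior, so it embeds isometrically into $\mathbb H$ by Corollary \ref{cor:convex subset embeds into H}; transporting the classical fact that Euclidean/hyperbolic Voronoi cells are intersections of half-planes, I get that locally $\partial A_v$ is piecewise geodesic and $A_v$ is an intersection of half-spaces, each bounded by a bisecting geodesic of some pair $\{v,u\}$ with $u$ adjacent to $v$ in $D$ (the bisector exists and is a genuine geodesic line precisely because, after embedding into $\mathbb H$, it is the honest perpendicular bisector). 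Letting $\eta \to 0$ yields all three claims: convexity and piecewise-geodesic boundary (item \ref{it:Voronoi has pw geodesic boundary}), the homeomorphism type and $A_v \cap s(\widetilde X)=\{v\}$ (item \ref{it:Voronoi cell is homeomorphic to a disc}, using that a star-shaped convex planar region with piecewise geodesic boundary is a closed disc, and that $A_v$ is unbounded-or-bounded --- one should check $A_v$ is bounded, which follows since $s(\widetilde X)$ is co-bounded when nonempty, as stated in the text), and the ``defined in terms of neighbors'' description (item \ref{it:Voronoi defined in terms of neighbors}).

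\textbf{Main obstacle.} I expect the principal difficulty to be the interaction between the singularity at $v$ itself and the bisector geometry: the local angle $\alpha_v = 2\pi d_v$ at $v$ can be large, so the ``link'' of $A_v$ at $v$ is a circle of circumference $\alpha_v$ rather than $2\pi$, and one must verify that the finitely many half-space boundaries emanating near $v$ actually close up into a topological disc rather than wrapping around. Concretely, the subtle point is ruling out that two Voronoi bisectors fail to meet in a single geodesic arc --- this is the phenomenon warned about in the paragraph before Proposition \ref{prop:two Voronoi cells are disjoint or share a common side}, and it is genuinely possible for cells defined relative to an incomplete set of centers, but excluded here because $s(\widetilde X)$ contains \emph{all} singular points, so that every geodesic bisector segment between equidistant competitors lies in a region embeddable into $\mathbb H$ where bisectors behave classically. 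I would isolate this as the one place requiring real care, handling it by a compactness argument on $\partial A_v$ together with repeated application of Corollary \ref{cor:convex subset embeds into H} to overlapping convex neighborhoods covering $\partial A_v \setminus \{v\}$, and a separate small-cone argument at $v$ itself; the remaining bookkeeping (that the intersection over neighbors suffices, item \ref{it:Voronoi defined in terms of neighbors}) is then the standard argument that a half-space not containing $v$ in its interior and cutting off part of $A_v$ must come from a singular point realizing the minimal distance, i.e.\ an adjacent vertex of $D$.
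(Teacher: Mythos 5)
Your central step fails: the claim that $A_v \setminus B(v,\eta)$ is convex is false. Removing a ball around the star-center of a star-shaped region generically destroys convexity, and this is not fixed by the singularity at $v$. Indeed, since $\alpha_v = 2\pi d_v \ge 4\pi$, there are pairs of directions at $v$ with angular separation $\theta$ satisfying $\pi \le \theta \le \alpha_v - \pi$; any geodesic segment in $A_v$ joining two points in such directions and sufficiently far out passes through $v$ itself (this is exactly the local-geodesic criterion stated in Section 3 for paths through singular points), hence leaves $A_v\setminus B(v,\eta)$. So Corollary \ref{cor:convex subset embeds into H} cannot be applied to $A_v\setminus B(v,\eta)$, and the ``letting $\eta\to0$'' step never gets off the ground. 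A related confusion: $v$ lies in the \emph{interior} of $A_v$, not on $\partial A_v$, so there is no ``small-cone argument at $v$ itself'' to run on the boundary --- the singularity at $v$ simply does not interfere with the analysis of $\partial A_v$.

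The correct move, which you in fact gesture at in your final paragraph, is to work locally near each boundary point $x\in\partial A_v$ rather than excising a cone at $v$. This is what the paper does: for each $x\in\partial A_v$ one produces a closed metric ball $B_x$ centered at $x$ with $\mathring{B}_x\cap s(\widetilde X)=\emptyset$ (using that $d(x,v)\le d(x,u)$ for all $u\in s(\widetilde X)$), applies Corollary \ref{cor:convex subset embeds into H} to $B_x$ alone, and identifies $A_v\cap U_x$ for a small $U_x\subset B_x$ with a neighborhood in a classical hyperbolic Voronoi cell. Combined with Lemma \ref{lem:uniqueness of circumcenter} (showing $|F_x|=1$ away from finitely many boundary points) this gives that $\partial A_v$ is piecewise geodesic and that $A_v$ is locally convex. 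Global convexity then follows from Cartan--Hadamard, and the disc structure of item (\ref{it:Voronoi cell is homeomorphic to a disc}) comes from the exponential map at $v$, using star-shapedness (which you did establish correctly). Item (\ref{it:Voronoi defined in terms of neighbors}) then falls out of the correspondence between geodesic sides of $A_v$ and vertices $u$ with $F_x=\{u\}$ for some $x$. The two preliminary reductions you state (finiteness of the relevant competitor set by compactness of $A_v$, co-boundedness of $s(\widetilde X)$) are correct and needed; the gap is solely in replacing the bad global excision by a cover of $\partial A_v$ by small convex balls.
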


The following proposition describes the   intersection of two  Voronoi cells.

\begin{prop}
\label{prop:two Voronoi cells are disjoint or share a common side}
Let $v,u \in s(\widetilde{X})$ be a pair of distinct vertices of the Delaunay graph. Consider the two Voronoi cells $A_v$ and $A_u$.
\begin{itemize}
 \item  If  $A_v \cap A_u \neq \emptyset$ then $A_v \cap A_u$ is either  a single point or a common geodesic side.
\item
   $A_v$ and $A_u$ have a common geodesic side if and only if    $v$ and $u$ span an edge  in the Delaunay  graph.
\end{itemize}
\end{prop}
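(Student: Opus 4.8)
The plan is to reduce both statements to the classical hyperbolic picture via Corollary~\ref{cor:convex subset embeds into H}. First I would prove the structural claim: if $A_v \cap A_u \neq \emptyset$ then the intersection is a single point or a common geodesic side. The set $A_v \cap A_u$ is convex, being an intersection of two convex sets (Proposition~\ref{prop:properties of Voronoi and Delaunay}(\ref{it:Voronoi has pw geodesic boundary})). By Proposition~\ref{prop:properties of Voronoi and Delaunay}(\ref{it:Voronoi cell is homeomorphic to a disc}) the only singular point in $A_v$ is $v$ and the only one in $A_u$ is $u$, and since $v \neq u$ and a point cannot be equidistant-minimal to two distinct sites while being one of them, $v \notin A_u$ and $u \notin A_v$; hence $(A_v \cap A_u) \cap s(\widetilde X) = \emptyset$, so in particular its interior misses $s(\widetilde X)$. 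Thus by Corollary~\ref{cor:convex subset embeds into H} the set $A_v \cap A_u$ embeds isometrically into $\HH$. Now I claim $A_v \cap A_u$ is contained in the ``bisector'' $\{x : d_{\widetilde X}(x,v) = d_{\widetilde X}(x,u)\}$: indeed any interior point $x$ of $A_v \cap A_u$ satisfies both $d(x,v) \le d(x,u)$ and $d(x,u) \le d(x,v)$. It remains to see this bisector, intersected with the convex region, is a geodesic arc. For this I would take a convex neighborhood (e.g.\ a half-space or ball) of a point of $A_v \cap A_u$ whose interior avoids $s(\widetilde X)$, embed it in $\HH$ by Corollary~\ref{cor:convex subset embeds into H}, and observe that the distance functions $d(\cdot, v)$ and $d(\cdot, u)$ pull back to honest hyperbolic distance functions to two points (the images of $v$, $u$ under the embedding, which make sense since geodesics from an interior point to $v$ and to $u$ stay in a slightly enlarged convex set avoiding other singularities). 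Their level set $\{d_\HH(\cdot, \bar v) = d_\HH(\cdot, \bar u)\}$ is a hyperbolic geodesic, and the two half-planes it cuts out correspond to $d(\cdot,v) \lessgtr d(\cdot,u)$. Hence locally $A_v \cap A_u$ is a sub-arc of a geodesic; globally, being convex, connected and one-dimensional (it has empty interior, else $A_v$ would equal $A_u$ on an open set forcing $v = u$), it is a single geodesic segment, which may degenerate to a point.

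Second I would prove the equivalence between ``$A_v$ and $A_u$ share a geodesic side'' and ``$v,u$ span a Delaunay edge.'' For the forward direction, suppose the common side $e = A_v \cap A_u$ is a nondegenerate geodesic segment. Pick a point $m$ in the relative interior of $e$. Then $d(m,v) = d(m,u) =: \rho$ and $d(m,w) \ge \rho$ for every $w \in s(\widetilde X)$, so the closed ball $B(m,\rho)$ has $B(m,\rho) \cap s(\widetilde X) \supseteq \{v,u\}$ and $\mathring B(m,\rho) \cap s(\widetilde X) = \emptyset$. If the intersection contained a third singular point $w$, then $m$ would lie in $A_w$ as well, and a small perturbation of $m$ along $e$ towards a generic interior point would still lie in $A_v \cap A_u \cap A_w$; but $m$ can be chosen so that it has exactly two closest sites by moving slightly, since the locus equidistant to three sites is lower-dimensional in the $\HH$-chart. (Alternatively one shrinks $\rho$ by moving $m$ off the triple locus.) This gives a ball witnessing the Delaunay edge. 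For the converse, suppose $B$ is a closed ball with center $m$, $B \cap s(\widetilde X) = \{v,u\}$, $\mathring B \cap s(\widetilde X) = \emptyset$. Then $d(m,v) = d(m,u) = \mathrm{radius}(B)$ and $d(m,w) > \mathrm{radius}(B)$ for all other sites $w$, so $m \in A_v \cap A_u$ and $m$ is not the only such point — working in the $\HH$-chart near $m$ (legitimate since geodesics from $m$ to $v$ and $u$ lie in a convex set whose interior avoids $s(\widetilde X)$), the bisector geodesic through $m$ stays in $A_v \cap A_u$ for a short distance because the strict inequality $d(m,w) > \mathrm{radius}(B)$ persists under small perturbation. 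Hence $A_v \cap A_u$ is not a single point, so by the first part it is a common geodesic side.

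The main obstacle I anticipate is the careful handling of the $\HH$-embedding near a point $x$ of $A_v \cap A_u$: one needs a genuinely convex open set $U \ni x$ with $\mathring U \cap s(\widetilde X) = \emptyset$ on which \emph{both} distance functions $d(\cdot,v)$ and $d(\cdot,u)$ simultaneously pull back to hyperbolic point-distance functions, which requires that the unique geodesics from points of $U$ to $v$ and to $u$ do not wander through other singularities. I would handle this by first establishing (or citing from Proposition~\ref{prop:properties of Voronoi and Delaunay}) that $A_v$ itself, being convex with $\mathring A_v \cap s(\widetilde X) = \{v\}$ removed from its interior only at the single point $v$, isometrically embeds into $\HH$ in a way sending $v$ to a point and realizing $d(\cdot,v)$ as the hyperbolic distance to that point — and likewise for $A_u$ — and then noting that on the overlap $A_v \cap A_u$ (which avoids both $v$ and $u$) these two charts can be compared. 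The degenerate case where the common side is a single point, and the possibility that $A_v \cap A_u$ could a priori fail to be connected, also need a brief argument: connectedness follows from convexity, and a single-point intersection is exactly the case where the bisector geodesic meets the convex lens $A_v \cap A_u$ in a vertex of $\partial A_v$ and of $\partial A_u$ simultaneously. These are routine once the charts are set up, so the bulk of the work — and the only real subtlety — is the local-to-$\HH$ reduction.
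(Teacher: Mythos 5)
Your overall strategy for the first bullet (show $A_v \cap A_u \subset m(v,u)$, work in a local $\HH$-chart to see it is a piece of a geodesic, conclude by convexity) is essentially the paper's, and your direct treatment of the second bullet via the ball definition of Delaunay edges is correct, if a bit informal. But the fix you propose for what you correctly identify as ``the main obstacle'' does not work, and this is a genuine gap.

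You propose to realize $d(\cdot, v)$ as a hyperbolic point-distance by isometrically embedding $A_v$ itself into $\HH$. This is impossible: $v \in \mathring{A}_v$ is a singular point, so a small disc about $v$ inside $A_v$ is a hyperbolic cone of angle $2\pi d_v > 2\pi$, which does not isometrically embed in $\HH$. Corollary~\ref{cor:convex subset embeds into H} requires $\mathring{C} \cap s(\widetilde{X}) = \emptyset$ precisely to rule this out, and $A_v$ violates it. (Proposition~\ref{prop:properties of Voronoi and Delaunay} gives a \emph{homeomorphism} of $A_v$ onto a disc via the exponential map, not an isometry onto a subset of $\HH$.) So the two charts you want to ``compare on the overlap'' are not available, and your hedged earlier phrasing — that the images of $v$ and $u$ under the embedding ``make sense since geodesics from an interior point to $v$ and to $u$ stay in a slightly enlarged convex set avoiding other singularities'' — is not justified as written: you never exhibit such a convex set, and its interior would have to avoid \emph{all} singularities including $v$ and $u$ themselves.

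The paper resolves this cleanly, and you are close. For each $x \in A_v \cap A_u$ set $\rho = d_{\widetilde{X}}(x,v) = d_{\widetilde{X}}(x,u)$ and let $B_x$ be the closed ball of radius $\rho$ centered at $x$. Because $x$ lies in both Voronoi cells, $\rho \le d_{\widetilde{X}}(x,w)$ for every $w \in s(\widetilde{X})$, so $\mathring{B}_x \cap s(\widetilde{X}) = \emptyset$, while $v, u \in \partial B_x \subset B_x$. Now Corollary~\ref{cor:convex subset embeds into H} \emph{does} apply to $B_x$, and since $v, u$ lie in $B_x$, both distance functions become honest hyperbolic point-distances under the embedding; hence $m(v,u) \cap B_x$ is a hyperbolic geodesic arc, and $A_v \cap A_u \cap B_x$ is a subarc of it by convexity. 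Compactness of $A_v \cap A_u$ then gives a finite cover by such balls, from which the global conclusion (a single point or a single geodesic segment) follows. With this replacement your outline becomes sound; the rest of your argument, including the observation that the intersection has empty interior and the ball-perturbation argument for the Delaunay equivalence, goes through. Note also that the paper obtains the second bullet essentially for free from the last paragraph of the proof of Proposition~\ref{prop:properties of Voronoi and Delaunay}, whereas your direct argument redoes a bit of that work.
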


As a consequence of Proposition \ref{prop:two Voronoi cells are disjoint or share a common side} we deduce that the family of   Voronoi cells $A_v$ for $v \in s(\widetilde{X})$  forms a tessellation of $\widetilde{X}$ called the \emph{Voronoi tessellation}. Moreover the Delaunay graph $D$ is   dual to this  tessellation. These facts are   well-known    in the classical Euclidean and hyperbolic situations.

\begin{cor}
\label{cor:Voronoi embeds}
Let  $v \in s(\widetilde{X})$ be a vertex of the Delaunay graph. Then the interior of the Voronoi cell $A_v$    embeds into $X$ via the restriction to $A_v$ of the covering map $q : \widetilde{X} \to X$.
\end{cor}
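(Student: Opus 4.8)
\textbf{Proof proposal for Corollary \ref{cor:Voronoi embeds}.}

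The plan is to show that the restriction $q|_{\mathring{A}_v}$ is injective; since $q$ is already a local homeomorphism (indeed a local isometry away from $s(\widetilde X)$, and $\mathring{A}_v$ contains no singular point by Proposition \ref{prop:properties of Voronoi and Delaunay}\eqref{it:Voronoi cell is homeomorphic to a disc}), injectivity suffices to conclude that $q|_{\mathring A_v}$ is an embedding. So suppose $x_1, x_2 \in \mathring{A}_v$ satisfy $q(x_1) = q(x_2)$. Then there is a deck transformation $g \in \pi_1(X)$ with $g x_1 = x_2$. My goal is to force $g$ to be trivial.

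First I would use the defining inequality of the Voronoi cell. Since $x_1 \in A_v$, for every $u \in s(\widetilde X)$ we have $d_{\widetilde X}(x_1, v) \le d_{\widetilde X}(x_1, u)$; applying $g$, which acts by isometries and permutes $s(\widetilde X)$ (with $g$ sending the Voronoi cell $A_v$ isometrically onto $A_{gv}$, by the equivariance noted after the definition of Voronoi cells), we get that $x_2 = g x_1 \in A_{gv}$. But $x_2$ also lies in $\mathring A_v$. By Proposition \ref{prop:two Voronoi cells are disjoint or share a common side}, if $gv \ne v$ then $A_{gv} \cap A_v$ is either a single point or a common geodesic side, hence has empty interior and in particular cannot contain the interior point $x_2$ of $A_v$. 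Therefore $gv = v$. Since $\pi_1(X)$ acts freely on $\widetilde X$ and $v \in s(\widetilde X) \subset \widetilde X$, the stabilizer of $v$ is trivial, so $g = \mathrm{id}$ and hence $x_1 = x_2$.

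The one point that needs a little care — and what I expect to be the main (though still minor) obstacle — is justifying that $q(x_1)=q(x_2)$ implies the existence of a deck transformation carrying $x_1$ to $x_2$ while respecting the singular structure, and that such a $g$ genuinely permutes $s(\widetilde X)$ and the family of Voronoi cells. This is exactly the equivariance already recorded in the text ($gA_v = A_{gv}$ for $g \in \pi_1(X)$), together with the fact that $s(\widetilde X) = q^{-1}(s(X))$ is $\pi_1(X)$-invariant; both follow immediately from $q$ being the universal covering of $X$ and the metric on $\widetilde X$ being the pullback of the one on $X$. With that in hand the argument above is complete: $q|_{A_v}$ restricted to the interior is a continuous injective local homeomorphism from $\mathring A_v$ into $X$, hence an embedding.
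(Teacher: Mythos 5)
Your argument is essentially the paper's proof (which observes $g\mathring{A}_v \cap \mathring{A}_v = \mathring{A}_{gv} \cap \mathring{A}_v = \emptyset$ for $g \neq \mathrm{id}$, using equivariance, freeness of the action, and Proposition \ref{prop:two Voronoi cells are disjoint or share a common side}); your reformulation in terms of $q(x_1)=q(x_2)$ and a deck transformation is logically the same thing. One small slip worth fixing: you assert that $\mathring{A}_v$ contains no singular point, but Proposition \ref{prop:properties of Voronoi and Delaunay}(\ref{it:Voronoi cell is homeomorphic to a disc}) actually says $A_v \cap s(\widetilde{X}) = \{v\}$, and $v$ lies in the \emph{interior} of $A_v$. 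Fortunately this does not damage the argument, because $q : \widetilde{X} \to X$ is a genuine (unbranched) covering map and hence a local homeomorphism at every point, including the cone point $v$; it is $p\circ q$, not $q$, that fails to be a local homeomorphism at singular points. With that correction your justification of the ``local homeomorphism plus injective $\Rightarrow$ embedding'' step is fine, and the rest of the proof matches the paper's.
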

\begin{proof}
It suffices to observe that $g\mathring{A}_v \cap \mathring{A}_v = \mathring{A}_{gv} \cap \mathring{A}_v = \emptyset$ for every $g \in \pi_1(X)$ with $g \neq \mathrm{id}$.
\end{proof}

\subsection*{The geometric realization of the Delaunay graph}
From now on  we regard the Delaunay graph $D$  as being  \emph{geometrically realized} in   $\widetilde{X}$. More precisely,  we identify the vertices of $D$ with the singular set $s(\widetilde{X})$ and realize every edge of $D$ by the corresponding geodesic arc in $\widetilde{X}$. 

\begin{prop}
\label{prop:Delaunay graph is embedded}
The Delaunay graph $D$ is embedded in $\widetilde{X}$. The projection $q(D)$ is embedded in the surface $X$.
\end{prop}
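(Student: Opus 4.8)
\textbf{Proof plan for Proposition \ref{prop:Delaunay graph is embedded}.}

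The plan is to verify that no two distinct open edges of the geometrically realized Delaunay graph $D$ intersect, and that no vertex lies in the interior of an edge; this is precisely what it means for a simplicial $1$-complex realized by geodesic arcs to be embedded. First I would set up the basic local picture: each edge of $D$ joining adjacent vertices $v_1, v_2$ is the geodesic arc $[v_1,v_2]$ whose interior, by the definition of the Delaunay edge and the uniqueness of geodesics in the $\mathrm{CAT}(-1)$ space $\widetilde{X}$, avoids $s(\widetilde{X})$. Consequently every open edge is contained in the union $\mathring{A}_{v_1} \cup \mathring{e} \cup \mathring{A}_{v_2}$ where $\mathring{e}$ is a point or subarc of the common geodesic side $A_{v_1} \cap A_{v_2}$ furnished by Proposition \ref{prop:two Voronoi cells are disjoint or share a common side}; more precisely I would argue that the geodesic $[v_1,v_2]$ crosses the common side exactly once, dividing it into one piece in $A_{v_1}$ and one in $A_{v_2}$, using the fact that $d_{\widetilde{X}}(\cdot, v_1) - d_{\widetilde{X}}(\cdot, v_2)$ is monotone along this geodesic (both distance functions are convex and the geodesic realizes the distance between the two centers of a ball witnessing the edge). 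This assigns to the edge a well-defined decomposition adapted to the Voronoi tessellation.

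Next I would use this decomposition to rule out crossings. Suppose two distinct edges $e = [v_1,v_2]$ and $e' = [w_1,w_2]$ share an interior point $x$. The point $x$ lies in some Voronoi cell $A_z$, and since $x$ is in the interior of both edges it must lie in $\mathring{A}_z$ or on a side of $A_z$. If $x \in \mathring{A}_z$ then the portion of $e$ through $x$ lies in $\mathring{A}_z$, forcing $z \in \{v_1, v_2\}$ by the description above, and likewise $z \in \{w_1, w_2\}$; then both edges are radial geodesics from $z$ through $x$ inside the convex cell $A_z$, hence coincide near $x$ by uniqueness of geodesics in $\widetilde{X}$, and by geodesic-completeness-type extension arguments within the cell one concludes $\{v_1,v_2\} = \{w_1,w_2\}$, contradicting distinctness. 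If instead $x$ lies on a side $A_z \cap A_{z'}$, then the same monotonicity argument pins both edges to have endpoints among $\{z, z'\}$, again giving $e = e'$. A vertex $v$ cannot lie in the interior of an edge $[v_1,v_2]$ because $\mathring{[v_1,v_2]} \cap s(\widetilde{X}) = \emptyset$. This proves $D$ is embedded in $\widetilde{X}$.

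For the second assertion, that $q(D)$ is embedded in $X$, I would transfer the argument downstairs using equivariance. The covering map $q : \widetilde{X} \to X$ is a local isometry, the Voronoi family is $\pi_1(X)$-equivariant with $g A_v = A_{gv}$, and by Corollary \ref{cor:Voronoi embeds} each $\mathring{A}_v$ injects into $X$ with disjoint translates. Since every open edge of $D$ sits inside $\mathring{A}_{v_1} \cup \mathring{e} \cup \mathring{A}_{v_2}$ and the union of all (closed) Voronoi cells is a locally finite tessellation, a point of $X$ has only finitely many $q$-preimages, each lying in finitely many cells; a coincidence $q(x) = q(x')$ with $x \in e$, $x' \in e'$ and $x' = gx$ for some $g \in \pi_1(X)$ would produce, via $g e$, two edges of $D$ meeting at $x'$, reducing to the embeddedness statement in $\widetilde{X}$ already proved (and forcing $g$ to fix the relevant cell, hence $g = \mathrm{id}$ since $\pi_1(X)$ acts freely). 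The main obstacle I anticipate is the bookkeeping in the case analysis at a Voronoi side: one must carefully use the strict convexity coming from $\mathrm{CAT}(-1)$ together with Proposition \ref{prop:two Voronoi cells are disjoint or share a common side} to guarantee that a Delaunay geodesic crosses the dual side transversally and exactly once, so that "the edge between $v_1$ and $v_2$" is genuinely unique and its interior meets at most the two cells $A_{v_1}, A_{v_2}$; everything else is a routine consequence of unique geodesics and equivariance.
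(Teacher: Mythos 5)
There is a genuine gap in the first part of your argument, and it is not one that can be patched by ``careful bookkeeping'': the foundational claim that a Delaunay edge $[v_1,v_2]$ has its interior contained in $\mathring{A}_{v_1}\cup\mathring{e}\cup\mathring{A}_{v_2}$, or equivalently that the geodesic $[v_1,v_2]$ crosses only the dual Voronoi side, is simply false --- already in the Euclidean plane and \emph{a fortiori} in the singular hyperbolic setting. Take $v_1=(-1,0)$, $v_2=(1,0)$ and $v_3=(0,\tfrac{1}{2})$. The circle centered at $(0,-10)$ passing through $v_1$ and $v_2$ has $v_3$ strictly outside, so $[v_1,v_2]$ is a Delaunay edge; yet its midpoint $(0,0)$ satisfies $d\left((0,0),v_3\right)=\tfrac{1}{2}<1=d\left((0,0),v_1\right)=d\left((0,0),v_2\right)$, so the midpoint lies in $\mathring{A}_{v_3}$, not in $A_{v_1}\cup A_{v_2}$. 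Your monotonicity observation that $d(\cdot,v_1)-d(\cdot,v_2)$ changes sign exactly once along $[v_1,v_2]$ is correct, but it only compares $v_1$ with $v_2$; it says nothing about whether a third point $v_3$ might be closer than either at some interior parameter, which is exactly what happens for obtuse configurations. Consequently the case analysis ``$x\in\mathring{A}_z\Rightarrow z\in\{v_1,v_2\}$'' breaks down, and with it the rest of the argument.

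The paper takes an entirely different route that avoids Voronoi cells altogether in this proof. For two edges $\gamma_i$ joining $v_i$ to $u_i$ ($i=1,2$) it uses the empty witnessing balls $B_i$ from the definition of a Delaunay edge: since $B_i$ is convex and contains $\{v_i,u_i\}$, the chord $\gamma_i$ lies in $B_i$, so an interior crossing of $\gamma_1$ and $\gamma_2$ forces $|\partial B_1\cap\partial B_2|>2$. Lemma~\ref{lem:uniqueness of circumcenter} (uniqueness of the circumcenter, proved via Corollary~\ref{cor:convex subset embeds into H}) then forces $B_1=B_2$, a contradiction. Your argument for the second assertion (equivariance plus freeness of the $\pi_1(X)$-action pushes embeddedness from $\widetilde{X}$ down to $X$) is essentially the one the paper uses and is fine; it is the first half that needs to be replaced by a circumcircle-type argument.
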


The above statement means that     geodesic arcs realizing  two distinct edges of   $D$ may  intersect only at  a vertex incident to both. In particular $D$ is planar. Likewise two distinct edges of $q(D)$ may intersect only at a common singular point of $s(X)$.

\begin{prop}
\label{prop:the angle between two edges of Delaunay graph is at most pi}
Any connected component of the complement of the Delaunay graph in $\widetilde{X}$ has locally convex boundary.
\end{prop}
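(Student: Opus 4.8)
The plan is to show that at every point of the boundary of a complementary component, the interior angle measured from inside the component is at most $\pi$. Boundary points come in two types: points in the interior of a Delaunay edge, and vertices of $D$, i.e. singular points of $s(\widetilde X)$. The first type is easy: along the interior of an edge the component sits on one side of a geodesic arc, so locally it looks like a half-space, whose interior angle is exactly $\pi$. The real work is at the singular vertices $v \in s(\widetilde X)$, where several Delaunay edges emanate and the component occupies an angular sector of the total angle $\alpha_v = 2\pi d_v$ between two consecutive such edges; I must show every such sector has angular width at most $\pi$.

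First I would set up the local picture at a vertex $v$. By Proposition \ref{prop:properties of Voronoi and Delaunay}, the Voronoi cell $A_v$ is convex with piecewise geodesic boundary, and by Proposition \ref{prop:two Voronoi cells are disjoint or share a common side} the cells tessellate $\widetilde X$ with $D$ as dual graph; in particular the Voronoi cells $A_u$ for $u$ adjacent to $v$ are exactly the cells sharing a geodesic side with $A_v$, and these sides, traversed cyclically, sweep out the full angle $\alpha_v$ around $v$. Each such shared side lies on the perpendicular bisector between $v$ and the corresponding neighbor $u$, and the Delaunay edge $[v,u]$ crosses that bisector perpendicularly (this is where I'd invoke Corollary \ref{cor:convex subset embeds into H}: take a small convex neighborhood of the relevant edge and bisector, embed it isometrically in $\HH$, and run the classical argument). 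So consecutive Delaunay edges at $v$ are separated, inside the corresponding complementary sector, by a portion of $\partial A_v$ consisting of one or more Voronoi sides; the angle of the sector at $v$ equals the sum of the angles at $v$ of these consecutive Voronoi triangles/quadrilaterals cut off near $v$.

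The key estimate is then the following: consider two consecutive Delaunay edges $[v,u_1]$ and $[v,u_2]$ at $v$ bounding a complementary sector, and suppose they are separated by a single Voronoi vertex $w$ (the generic case; a chain of Voronoi vertices is handled by iterating). The quadrilateral with vertices $v, u_1, w, u_2$ — more precisely the local convex piece near $v$ embedded in $\HH$ via Corollary \ref{cor:convex subset embeds into H} — has right angles at the midpoints where the Delaunay edges meet the Voronoi sides, and $w$ is equidistant from $v,u_1,u_2$, so $w$ is the circumcenter and the circle through $u_1,v,u_2$ is empty of singular points. In hyperbolic geometry the angle subtended at $v$ by the arc $u_1 u_2$ not containing... — the cleanest route is: the angle of the sector at $v$ equals $\pi$ minus the angle at $w$ in the two right-angled triangles, and since the Voronoi cell $A_v$ is convex its boundary angle at $w$ is at most $\pi$, forcing the sector angle at $v$ to be at most $\pi$. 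I expect the main obstacle to be precisely this angle bookkeeping at $w$: making rigorous that the convexity of $A_v$ at the Voronoi vertex $w$ (a consequence of Proposition \ref{prop:properties of Voronoi and Delaunay}(\ref{it:Voronoi has pw geodesic boundary})) translates, after the local embedding into $\HH$, into the desired bound on the Delaunay sector angle at $v$, and handling the degenerate cases (a chain of several Voronoi vertices between two consecutive Delaunay edges, or sides of zero length) by a limiting or inductive argument. Once the angle at each singular vertex and each edge-interior point is seen to be at most $\pi$, local convexity of the complementary component follows by definition.
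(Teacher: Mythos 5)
Your strategy is a direct one via Voronoi--Delaunay duality, whereas the paper argues by contradiction: if some sector at $v$ had angle exceeding $\pi$, it would contain a half-space $\mathfrak{H}$ with $v \in \partial\mathfrak{H}$ and no Delaunay edge of $v$ inside $\mathfrak{H}$; the Claim in the proof of Lemma \ref{lem:hyperbolic geometry lemma} (established via the hyperbolic law of sines) then forces $\mathcal{W}_r^R \subset A_v$ for every $R$, so $A_v$ would have infinite area, contradicting its compactness. The two approaches are genuinely different, and I think there is a real gap in yours.

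The gap is exactly where you suspect it, but it is more serious than ``angle bookkeeping.'' Your quadrilateral argument assumes two things that fail for obtuse Delaunay configurations: (a) that the midpoint of the Delaunay edge $[v,u_i]$ lies on the Voronoi side $A_v \cap A_{u_i}$ (so the perpendicular foot exists on that side), and (b) that the Voronoi vertex $w$ shared by the sides $A_v \cap A_{u_1}$ and $A_v \cap A_{u_2}$ lies in the angular sector at $v$ bounded by $[v,u_1]$ and $[v,u_2]$. Neither is true in general. Concretely, take $v=(0,0)$, $u_1=(1,0)$, $u_2=(2,10)$ (Euclidean model, but the same phenomenon occurs in $\HH$); the triangle is obtuse at $u_1$, and the circumcenter $w=(0.5,5.1)$ lies at angular direction $\approx 84.4^\circ$ from $v$, while $u_1$ and $u_2$ lie at $0^\circ$ and $\approx 78.7^\circ$. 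So $[v,w]$ exits the sector immediately, the quadrilateral $v,m_1,w,m_2$ is not the local picture of that sector, and the ``angle at $v$ equals $\pi$ minus angle at $w$'' bookkeeping cannot get started. (Relatedly, the cyclic order of the Delaunay edges at $v$ need not interleave with the cyclic order of Voronoi vertices around $v$ in the naive way.) In this particular example the sector angle is small, so the proposition still holds, but your argument provides no bound for it, and a ``limiting or inductive argument'' over chains of Voronoi vertices does not address the issue, since here there is only one Voronoi vertex and the picture is already wrong. A correct direct argument would need to either restrict attention to the ``empty circumball'' at $w$ together with a separate argument pinning down which of the two sectors at $v$ is the relevant one, or else do what the paper does and pass to the half-space/infinite-area contradiction, which handles all configurations uniformly.
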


\subsection*{An estimate for   the area of Voronoi cells}
%The following elementary lemma concerning hyperbolic Voronoi cells is needed below to study the Delaunay graph in  greater depth.

%Two half-spaces are opposite 

Recall that a subset $L$ of a metric space $M$ is   \emph{$r$-separated} if $d_M(x_1,x_2) \ge r$ for any two distinct points $x_1,x_2 \in L$.  Moreover recall   that we defined a half-space   in  $\widetilde{X}$ to be the closure of a connected component of the complement of some bi-infinite geodesic line in $\widetilde{X}$. %Recall moreover that a half-space in $\widetilde{X}$ is the closure 

\begin{lemma}
\label{lem:hyperbolic geometry lemma}
Let $  A_v$ be the Voronoi cell associated to the singular point $v \in s(\widetilde{X}) $.  Let $\mathfrak{H}$ be a half-space in $\widetilde{X}$ with $v \in \partial \mathfrak{H}$. Assume that $s(\widetilde{X})$ is $r$-separated and that $d_{\widetilde{X}}(v,u) \ge R$ for every singular point $u \in \mathfrak{H} \cap s(\widetilde{X})$ distinct from $v$.  Then
$$ \mathrm{Area}( \mathfrak{H} \cap A_v) \ge 2\varphi  \sinh^2(R/4) $$
where $\varphi > 0$ is a constant depending only on the distance $r$.
\end{lemma}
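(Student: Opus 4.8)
The plan is to reduce the problem to a purely hyperbolic-plane computation via Corollary~\ref{cor:convex subset embeds into H}, and then carry out that computation explicitly. First I would observe that the set $\mathfrak{H} \cap A_v$ is an intersection of two convex sets and hence convex; moreover, since $A_v \cap s(\widetilde{X}) = \{v\}$ by Proposition~\ref{prop:properties of Voronoi and Delaunay}(\ref{it:Voronoi cell is homeomorphic to a disc}), its interior misses the singular set. By Corollary~\ref{cor:convex subset embeds into H} there is an isometric embedding $F$ of $\mathfrak{H}\cap A_v$ (or of a slightly larger convex neighborhood, to be safe about boundary behavior at $v$) into $\mathbb{H}$. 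So it suffices to find a lower bound for the area of the image $F(\mathfrak{H}\cap A_v)$ inside $\mathbb{H}$.

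Next I would identify what the image looks like near $v$. Let $w = F(v)\in\mathbb{H}$. The defining inequalities $d_{\widetilde X}(x,v)\le d_{\widetilde X}(x,u)$ for neighboring singular points $u$, by Proposition~\ref{prop:properties of Voronoi and Delaunay}(\ref{it:Voronoi defined in terms of neighbors}), transport under the isometric embedding to inequalities $d_\HH(F(x),w)\le d_\HH(F(x), F(u))$ --- at least on the part of a geodesic segment from $v$ to $x$ that stays inside the embedded region, which is enough because geodesics from $v$ into $A_v$ stay in $A_v$ by convexity. The hypothesis $d_{\widetilde X}(v,u)\ge R$ means every relevant $F(u)$ lies outside the hyperbolic ball $B_\HH(w,R)$. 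Hence on the ball $B_\HH(w,R/2)$ the point $w$ is strictly closer than any $F(u)$, so $F(\mathfrak{H}\cap A_v)$ contains $\mathfrak{H}' \cap B_\HH(w,R/2)$, where $\mathfrak{H}'=F(\mathfrak H)$ is a half-plane in $\HH$ whose boundary geodesic passes through $w$ (this uses that $v\in\partial\mathfrak H$ and that the geodesic line $\partial\mathfrak H$, or enough of it, lies in the embedded convex region; one may need to pass to the half-disk directly rather than a full half-plane, but that only helps). I should be a little careful here: $F$ is only guaranteed on $\mathfrak H\cap A_v$, so rather than talking about images of things outside it, I would argue intrinsically: any point $x\in\widetilde X$ with $d_{\widetilde X}(v,x)<R/2$ that lies in $\mathfrak H$ automatically lies in $A_v$ (closer to $v$ than to any other singular point, since non-$v$ singular points in $\mathfrak H$ are at distance $\ge R$, and singular points outside $\mathfrak H$ are even farther because geodesics to them cross $\partial\mathfrak H\ni v$), and then push the intersection of the metric half-ball with the half-space into $\HH$.

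Finally I would compute. A hyperbolic half-disk of radius $\rho$ has area $\tfrac12 \cdot 2\pi(\cosh\rho - 1) = 2\pi\sinh^2(\rho/2)$; taking $\rho = R/2$ gives area $2\pi\sinh^2(R/4)$. The factor $2\varphi$ in the statement, with $\varphi$ depending only on $r$, presumably arises because the half-space boundary geodesic $\partial\mathfrak H$ may be bent at $v$ (it is the image of a geodesic of $\widetilde X$ through a singular point, so in $\HH$ the relevant sector at $w$ has some angle $\theta_w$ which, by the $r$-separation of the singular set, is bounded below by a constant $2\varphi$ depending only on $r$ --- a short separate hyperbolic-geometry estimate bounding the cone angle, or rather the available sector angle, from below). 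So the region contains a hyperbolic circular sector of angle $\ge 2\varphi$ and radius $R/2$, whose area is $2\varphi\sinh^2(R/4)$. The main obstacle I anticipate is this last point: making precise why the sector angle at $v$ inside $\mathfrak H\cap A_v$ is bounded below by a constant depending only on $r$ --- that is, controlling how ``thin'' the half-space can be pinched at the singular vertex in terms of the separation constant --- and reconciling the intrinsic angle at $v$ (which can exceed $\pi$, since $v$ is singular) with the embedded picture in $\HH$. Everything else is a routine reduction plus the standard area formula for hyperbolic disks and sectors.
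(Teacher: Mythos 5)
Your overall strategy --- reduce to a hyperbolic-plane computation via Corollary~\ref{cor:convex subset embeds into H} and bound the area of a sector based at $v$ --- is the right shape, but the central step of your argument has a genuine gap, and your guess about the origin of $\varphi$ is not what is actually going on.

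The gap is in the sentence where you claim that ``singular points outside $\mathfrak H$ are even farther because geodesics to them cross $\partial\mathfrak H\ni v$.'' This is false. The hypothesis $d_{\widetilde X}(v,u)\ge R$ is made \emph{only} for singular points $u$ lying \emph{inside} $\mathfrak H$. A singular point $u$ just outside $\mathfrak H$ could be at distance as small as $r$ from $v$. The geodesic from a point $x\in\mathfrak H$ to such a $u$ does cross $\partial\mathfrak H$, but it need not cross anywhere near $v$, and this gives no useful lower bound on $d_{\widetilde X}(x,u)$. Concretely, if $u$ sits at distance $r$ from $v$ just below $\partial\mathfrak H$, and $x$ sits at distance $r$ from $v$ just above $\partial\mathfrak H$ (with $r<R/2$), then $d_{\widetilde X}(x,u)$ can be far smaller than $d_{\widetilde X}(x,v)=r$, so $x\notin A_v$. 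Thus the half-ball $\mathfrak H\cap B_{\widetilde X}(v,R/2)$ is \emph{not} contained in $A_v$, and your proposed lower bound $2\pi\sinh^2(R/4)$ (note: strictly larger than what the lemma claims) cannot hold. Relatedly, the constant $\varphi$ does not arise from any bending of $\partial\mathfrak H$ at the singular vertex: its role is precisely to discard the dangerous collar of $\mathfrak H$ near $\partial\mathfrak H$ where close-by singular points outside $\mathfrak H$ can eat into $A_v$.

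The paper's proof handles exactly this. Writing $v_{\pm r}=\rho(\pm r)$ for the points on $\partial\mathfrak H$ at distance $r$ from $v$, it works inside the smaller region $\mathcal{W}_r^R=\mathfrak H\cap \mathfrak F(v_{-r})\cap\mathfrak F(v_r)\cap B_{\widetilde X}(v,R)$, where $\mathfrak F(p)=\{x: d(x,v)\le d(x,p)\}$. The two bisectors $m(v,v_{\pm r})$ are ultra-parallel geodesics at distance $r$ apart meeting $\partial\mathfrak H$ perpendicularly; $\varphi$ is the angle at $v$ of the sector asymptotic to the strip between them, so $\mathcal W_r^R$ contains the hyperbolic sector $Q_{\varphi,R/2}$. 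The nontrivial claim --- that for every Delaunay neighbor $u_i$ of $v$ lying outside $\mathfrak H$, the region $\mathcal W_r^R$ still lies inside $\mathfrak F(u_i)$ --- is established by replacing $u_i$ with the point $w_i$ at distance $r$ along $[v,u_i]$ (using convexity of the metric to get $\mathfrak F(w_i)\subset\mathfrak F(u_i)$), and then running a triangle-comparison argument with the hyperbolic law of sines to show $d(x,v_r)\le d(x,w_i)$ for $x\in\mathcal W_r^R$. This is the step your proposal is missing; without it, or some substitute that genuinely controls the nearby singular points on the other side of $\partial\mathfrak H$, the argument does not go through.
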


The expression appearing on the right-hand side of the above estimate  is  the   area of the hyperbolic sector with central angle $\varphi$ and of radius $R/2$. 
Throughout  the following proof and given a point $p \in \widetilde{X}$ it is convenient  to introduce the notation 
$$ \mathfrak{F}(p) = \{ x\in \widetilde{X}  : d_{\widetilde{X}}(x,v) \le d_{\widetilde{X}}(x,p) \}. $$

\begin{proof}[Proof of Lemma \ref{lem:hyperbolic geometry lemma}]

Let us first determine the angle  $\varphi > 0$   as follows. Let   $l_1$ and $l_2$ be a pair of geodesic lines in the hyperbolic plane $\mathbb{H}$ such that $d_{\mathbb{H}}(l_1, l_2) = r$.   Let $m$ be the mid-point of the geodesic arc perpendicular  to both $l_1$ and $l_2$. Then $\varphi$ is   the  angle  between the two geodesic rays emanating  from the point $m$ towards   the ideal points $l_1(\infty)$ and   $l_2(\infty)$.

Consider the bi-infinite geodesic $\rho : \mathbb{R} \to \widetilde{X}$ parametrized by arc length and such that $\rho(\mathbb{R}) = \partial \mathfrak{H}$.  Assume that $v = \rho(0)$ and denote $v_t = \rho(t)$ for all $t \in \RR$.
%$\text{$v_{t} = \rho(t), v_{-r} = \rho(-r), v_r = \rho(r)$ and $v_{2r} = \rho(2r)$}.$$ 
%Let $v_{-2r}, v_{-r}, v_0 = v, v_{r}$ and $v_{+2r}$ be the points along $\rho$ satisfying  
%$$d_{\widetilde{X}}(v,v_{-r}) = d_{\widetilde{X}}(v,v_{r}) = r \quad \text{and} \quad d_{\widetilde{X}}(v,v_{-2r}) = d_{\widetilde{X}}(v,v_{2r}) = 2r.$$

Let $\mathcal{W}_r^R$ be the subset of $\widetilde{X}$ given  by
$$\mathcal{W}_r^R = \mathfrak{H} \cap \mathfrak{F}(v_{-r}) \cap \mathfrak{F}(v_r) \cap B_{\widetilde{X}}(v,R). $$
As a   consequence of    Corollary \ref{cor:convex subset embeds into H}  the two convex sets  $B_{\widetilde{X}}(v,r)$ and $\mathfrak{H} \cap B_{\widetilde{X}}(v,R)$ are isometric to a hyperbolic ball of radius $r$ and a hyperbolic sector of radius $R$ and angle $\pi$, respectively. Let $Q_{\varphi, R/2}$ be the hyperbolic sector   of angle $\varphi$ and radius $R/2$ which is based at the vertex $v$ and   contained in $\mathcal{W}_r^R$.
The lemma will be established   by showing that $\mathcal{W}_r^R$ and therefore $Q_{\varphi, R/2}$ is contained in $\mathfrak{H} \cap A_v$.

%To conclude the proof   observing that
 %$\mathfrak{F}(v_{-r}) \cap \mathfrak{F}(v_{r}) \cap B_{\widetilde{X}}(v,R)$ 
 %can be isometrically embedded in an obvious way in the diagram described in the first paragraph of this proof. 

%so that the half-space $\mathfrak{H}$ lies on the left-hand side, say, of the geodesic segment from $v_{-2r}$ to $v_{2r}$. 

Let $u_1, \ldots, u_n$  with $n \in \mathbb{N}$ be the vertices of the Delaunay graph $D$ adjacent to the vertex $v$.  For every $i \in \{1,\ldots,n\}$ let $w_i$  be the point along the geodesic arc from $v$ to $u_i$  such that $d_{\widetilde{X}}(v,w_i) = r$.
 The convexity of the metric \cite[II.2.2]{bridson2013metric} implies that $\mathfrak{F}(w_i) \subset \mathfrak{F}(u_i)$ and likewise $\mathfrak{F}(v_{\pm r}) \subset \mathfrak{F}(v_{\pm 2r})$.

%%Denote
%$$
%\mathfrak{F}_- = \{ x\in \widetilde{X}  : d_{\widetilde{X}}(x,v) \le d_{\widetilde{X}}(x,v_-) \} 
%\quad \text{and}\quad 
%\mathfrak{F}_+ = \{ x\in \widetilde{X}  : d_{\widetilde{X}}(x,v) \le d_{\widetilde{X}}(x,v_+) \}. 
%$$

%Similarly, let $v'_-$ and $v'_+$ be the two points along $\rho$ satisfying $d_{\widetilde{X}}(v,v_-) = d_{\widetilde{X}}(v,v_+) = 2r$ and such that the half-space $\mathfrak{H}$ lies on the left-hand side, say, of the geodesic segment from $v_-$ to $v_+$. Denote
%$$
%\mathfrak{F}'_- = \{ x\in \widetilde{X}  : d_{\widetilde{X}}(x,v) \le d_{\widetilde{X}}(x,v'_-) \} 
%\quad \text{and}\quad 
%\mathfrak{F}'_+ = \{ x\in \widetilde{X}  : d_{\widetilde{X}}(x,v) \le d_{\widetilde{X}}(x,v'_+) \}. 
%$$
%Note that $\mathfrak{F}'_- \cap \mathfrak{F}'_+ \cap \rho$ is the geodesic segment from $v_-$ to $v_+$. It follows from Lemma \ref{lem:on halfspaces along a geodesic arc} that $\mathfrak{F}_- \subset \mathfrak{F}'_-$ and $\mathfrak{F}_+ \subset \mathfrak{F}'_+$.

%According to Proposition \ref{prop:properties of Voronoi and Delaunay} we have that
%$$A_v = \bigcap_{i=1}^n \mathfrak{F}_i\quad \text{where}\quad \mathfrak{F}_i = \{ x\in \widetilde{X} \: :\: d_{\widetilde{X}}(x,v) \le d_{\widetilde{X}}(x,u_i) \}.$$ 

%Let   $\mathfrak{F}'_i$ be defined analogously to   $\mathfrak{F}_i$  but with respect to the point $u'_i$ instead. It follows from Lemma \ref{lem:on halfspaces along a geodesic arc} that   $\mathfrak{F}'_i \subset \mathfrak{F}_i$. 

\begin{figure}[ht!] 
\begin{center}
\def\svgwidth{0.4\textwidth}
\begingroup%
\makeatletter%
\providecommand\color[2][]{%
	\errmessage{(Inkscape) Color is used for the text in Inkscape, but the package 'color.sty' is not loaded}%
	\renewcommand\color[2][]{}%
}%
\providecommand\transparent[1]{%
	\errmessage{(Inkscape) Transparency is used (non-zero) for the text in Inkscape, but the package 'transparent.sty' is not loaded}%
	\renewcommand\transparent[1]{}%
}%
\providecommand\rotatebox[2]{#2}%
\ifx\svgwidth\undefined%
\setlength{\unitlength}{221.37226255bp}%
\ifx\svgscale\undefined%
\relax%
\else%
\setlength{\unitlength}{\unitlength * \real{\svgscale}}%
\fi%
\else%
\setlength{\unitlength}{\svgwidth}%
\fi%
\global\let\svgwidth\undefined%
\global\let\svgscale\undefined%
\makeatother%
\begin{picture}(1,0.82519136)%
\put(0,0){\includegraphics[width=\unitlength,page=1]{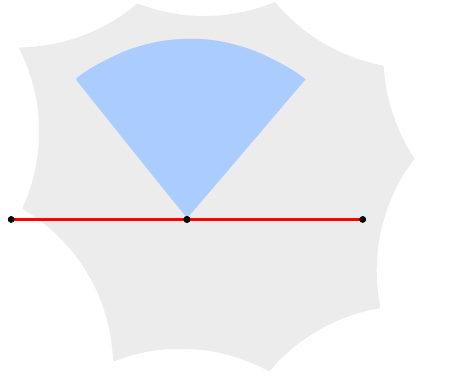}}%
\put(0.39666585,0.29459682){\color[rgb]{0,0,0}\makebox(0,0)[lb]{\smash{$v$}}}%
\put(0.76628011,0.29459682){\color[rgb]{0,0,0}\makebox(0,0)[lb]{\smash{$v_r$}}}%
\put(-0.00374969,0.29459682){\color[rgb]{0,0,0}\makebox(0,0)[lb]{\smash{$v_{-r}$}}}%
\put(0,0){\includegraphics[width=\unitlength,page=2]{hyperbolic_sector.pdf}}%
\put(0.39438286,0.40043991){\color[rgb]{0,0,0}\makebox(0,0)[lb]{\smash{$\varphi$}}}%
\put(0.25357009,0.53395184){\color[rgb]{0,0,0}\makebox(0,0)[lb]{\smash{$R/2$}}}%
\put(0.35967099,0.21207239){\color[rgb]{0,0,0}\makebox(0,0)[lb]{\smash{$A_v$}}}%
\put(0.30398706,0.36311269){\color[rgb]{0,0,0}\makebox(0,0)[lb]{\smash{{\color{red}$\mathfrak{H}$}}}}%
\put(0.20426926,0.10207674){\color[rgb]{0,0,0}\makebox(0,0)[lb]{\smash{{\color{blue}$\mathfrak{F}(v_{-r})$}}}}%
\put(0.47088614,0.10214552){\color[rgb]{0,0,0}\makebox(0,0)[lb]{\smash{{\color{blue}$\mathfrak{F}(v_{r})$}}}}%
\put(0,0){\includegraphics[width=\unitlength,page=3]{hyperbolic_sector.pdf}}%
\put(0.4024499,0.65102187){\color[rgb]{0,0,0}\makebox(0,0)[lb]{\smash{}}}%
\put(0.39399511,0.64425801){\color[rgb]{0,0,0}\makebox(0,0)[lb]{\smash{$Q_{\varphi,R/2}$}}}%
\end{picture}%
\endgroup%
\caption{The Voronoi cell  $A_v$  based  at the vertex $v$.}\label{fig:Voronoi} 
\end{center}
\end{figure}

\begin{claim*}
Let $u_i$  be one of the Delaunay vertices  adjacent to $v$. If $u_i \notin \mathfrak{H}$ then
$$\mathcal{W}_r^R \subset     \mathfrak{F}(w_i) \subset   \mathfrak{F}(u_i). $$
\end{claim*}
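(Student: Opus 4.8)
The goal is to show that $\mathcal{W}_r^R \subset \mathfrak{F}(w_i) \subset \mathfrak{F}(u_i)$ whenever $u_i$ is a Delaunay neighbor of $v$ lying outside the half-space $\mathfrak{H}$. The second inclusion is already recorded before the claim (it is the convexity-of-metric observation $\mathfrak{F}(w_i)\subset\mathfrak{F}(u_i)$), so the real content is the first inclusion $\mathcal{W}_r^R\subset\mathfrak{F}(w_i)$. The key geometric input is that $w_i$ lies at distance exactly $r$ from $v$ along the geodesic $[v,u_i]$, and $u_i\notin\mathfrak{H}$ forces this geodesic to leave $\mathfrak{H}$ immediately at $v$, i.e. the initial direction of $[v,u_i]$ points out of $\mathfrak{H}$ (or along $\partial\mathfrak{H}$). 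I would first reduce everything to a picture in the hyperbolic plane: the set $B_{\widetilde X}(v,r)$ together with the relevant portions of $\mathfrak H$, $\partial\mathfrak H$, the arcs $[v,v_{\pm r}]$, and $[v,w_i]$ all lie within a convex neighborhood of $v$ whose interior misses $s(\widetilde X)\setminus\{v\}$ (here one uses that $s(\widetilde X)$ is $r$-separated, so the ball $B_{\widetilde X}(v,r)$ contains no other singular point in its interior), hence by Corollary \ref{cor:convex subset embeds into H} this whole configuration embeds isometrically into $\mathbb{H}$.

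**Main computation inside $\mathbb{H}$.** Once inside $\mathbb{H}$, fix $v$ and the geodesic line $\partial\mathfrak H$ through $v$, so $\mathfrak H$ is a genuine hyperbolic half-plane with $v$ on its boundary. The point $w_i$ satisfies $d(v,w_i)=r$ and, since $u_i\notin\mathfrak H$, the point $w_i$ lies in the closed complementary half-plane $\mathbb{H}\setminus\mathring{\mathfrak H}$ (it is the point at distance $r$ from $v$ toward $u_i$, and the geodesic ray from $v$ to $u_i$ stays on the $u_i$-side of $\partial\mathfrak H$ near $v$ by local convexity of $\partial\mathfrak H$ as a geodesic). I then need: every point $x\in\mathcal{W}_r^R$ satisfies $d(x,v)\le d(x,w_i)$, i.e. $x$ is on the $v$-side of the perpendicular bisector $\mathrm{bis}(v,w_i)$. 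Now $x\in\mathcal{W}_r^R$ means in particular $x\in\mathfrak H$ and $d(x,v)\le R$. The bisector $\mathrm{bis}(v,w_i)$ is a geodesic line; its nearest point to $v$ is the midpoint of $[v,w_i]$, at distance $r/2$, and it separates $v$ from $w_i$. The crucial elementary fact is that $\mathrm{bis}(v,w_i)$ meets the open half-plane $\mathring{\mathfrak H}$, if at all, only far from $v$ — precisely, because $w_i\notin\mathring{\mathfrak H}$, the whole half-plane $\mathfrak H$ up to some controlled radius lies on the $v$-side of this bisector. I would make this quantitative by comparing with the sector $Q_{\varphi,R/2}$: the angle $\varphi$ was defined so that a geodesic through the midpoint of a length-$r$ segment orthogonal to two lines at distance $r$ subtends angle $\varphi$; here the analogous estimate shows that within $\mathfrak H$, the bisector $\mathrm{bis}(v,w_i)$ stays outside the radius-$R/2$ ball around $v$ intersected with $\mathfrak H$ — which is exactly why $Q_{\varphi,R/2}\subset\mathcal{W}_r^R\subset A_v$ in the end. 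Concretely: for $x\in\mathfrak H$ with $d(v,x)\le R$, use the triangle with vertices $v$, $x$, and the projection considerations, plus $d(v,w_i)=r\le d(v,u)$ for all other singular $u$, to conclude $d(x,w_i)\ge d(x,v)$.

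**The hard part** will be pinning down the claim that $w_i$ genuinely lies in the closed complement of $\mathfrak H$ — that is, translating "$u_i\notin\mathfrak H$" into "the geodesic $[v,u_i]$ does not enter $\mathring{\mathfrak H}$ near $v$." This uses that $\partial\mathfrak H=\rho(\mathbb R)$ is a bi-infinite \emph{geodesic} line in the $\mathrm{CAT}(-1)$ space $\widetilde X$, so locally near $v$ it separates, and a geodesic segment starting at $v$ with an endpoint $u_i$ on one closed side cannot dip into the open other side (in $\mathbb{H}$ this is clear; one must be mildly careful at $v$ itself since $v$ is a singular point, but after embedding a convex neighborhood of $v$ into $\mathbb{H}$ via Corollary \ref{cor:convex subset embeds into H}, the argument is purely two-dimensional hyperbolic geometry). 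I would also need the routine observation that all the distance comparisons proved inside the $\mathbb H$-chart remain valid in $\widetilde X$, which holds because $A_v$, $\mathfrak F(w_i)$, $\mathfrak H\cap B_{\widetilde X}(v,R)$ and the segments involved are convex and their interiors avoid $s(\widetilde X)\setminus\{v\}$, so distances to $v$ and to $w_i$ from points of $\mathcal W_r^R$ are computed faithfully in the chart. Granting these reductions, the final inequality $d(x,v)\le d(x,w_i)$ for $x\in\mathcal W_r^R$ is a short hyperbolic-trigonometry estimate, after which $\mathfrak F(w_i)\subset\mathfrak F(u_i)$ closes the claim.
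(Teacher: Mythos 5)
Your setup (reducing to a picture in $\mathbb{H}$, observing that $w_i$ lies in $\widetilde{X}\setminus \mathring{\mathfrak{H}}$ because $u_i\notin\mathfrak{H}$ and $[v,u_i]$ can meet $\rho$ only at $v$) is fine, and the second inclusion $\mathfrak{F}(w_i)\subset\mathfrak{F}(u_i)$ is indeed already recorded. But the main computation has a genuine gap, and the concrete geometric claim you rest it on is false. You assert that the bisector $m(v,w_i)$ ``stays outside the radius-$R/2$ ball around $v$ intersected with $\mathfrak{H}$.'' This fails whenever $w_i$ is close to the bounding geodesic $\rho$: take $u_i$ making a small angle with $\rho$ at $v$, so $w_i$ is close to $v_r$. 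Then $m(v,w_i)$ is nearly the perpendicular to $\rho$ at $v_{r/2}$, and it enters $\mathring{\mathfrak{H}}$ at distance roughly $r/2$ from $v$ --- nowhere near $R/2$. Correspondingly, your final formulation ``for $x\in\mathfrak{H}$ with $d(v,x)\le R$ $\ldots$ conclude $d(x,w_i)\ge d(x,v)$'' drops the constraints $x\in\mathfrak{F}(v_{-r})\cap\mathfrak{F}(v_{r})$ from the definition of $\mathcal{W}_r^R$, and without them the inequality is simply wrong: take $x=v_t$ with $r<t\le R$ and $w_i$ near $v_r$, so $d(x,w_i)\approx t-r < t = d(x,v)$.

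Those two bisector constraints are precisely what makes the claim true, and the paper's proof is organized around them. It does not try to compare $d(x,v)$ with $d(x,w_i)$ directly. Instead it chains through the intermediate point $v_r$ (or $v_{-r}$): first one shows $d(x,v_r)\le d(x,w_i)$ by comparing the two hyperbolic triangles $T_1=\triangle(v,v_r,w_i)$ (isosceles, both legs of length $r$) and $T_2=\triangle(x,v_r,w_i)$ via the law of sines, using that $w_i$ and $x$ lie on opposite sides of $\rho$ and that the chord $[w_i,x]$ crosses $\rho$ between $v_{-r}$ and $v_r$; then one invokes $x\in\mathfrak{F}(v_r)$ to get $d(x,v)\le d(x,v_r)$, and the two inequalities combine. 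Your outline never identifies this two-step chain nor singles out $v_r$ as the pivot, which is the idea that makes the estimate go through. As written, the proposal would not close.
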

\begin{proof}[Proof of Claim]
Consider a point $x \in \mathcal{W}_r^R$. We need  to show that $x \in \mathfrak{F}(w_i) $.  Note that $w_i$ and $x$ lie on opposite sides of the bi-infinite geodesic $\rho$. Both $w_i$ and $x$ belong to the following locally convex and contractible and hence \emph{convex} set
 $$B_{\widetilde{X}}(v,r) \cup \left( \mathfrak{F}(v_{-2r}) \cap \mathfrak{F}(v_{2r}) \cap  \mathfrak{H} \cap B_{\widetilde{X}}(v,R)\right).$$ 
 Therefore the intersection point $z$ of $\rho$ with   the geodesic arc  from $w_i$ to $x$    lies along the geodesic arc from $v_{-r}$ to $v_r$. This relies on the fact that $r \le R$ as follows from the assumptions and on the observation that 
 $\mathfrak{F}(v_{-2r}) \cap \mathfrak{F}_{2r} \cap \rho$ is   equal to the  geodesic segment from $v_{-r}$ to $v_{r}$.

Assume without loss of generality that  in fact the intersection point $z$ lies along the geodesic arc from $ v$ to $v_r$. Consider the two geodesic triangles 
$$T_1 =  \triangle(v,v_r,w_i) \quad \text{and} \quad  T_2 = \triangle(x,v_r,w_i).$$ The triangles $T_1$ and $T_2$  have no singular points in their interior. Therefore $T_1$ and $T_2$ are isometric   to their respective comparison hyperbolic triangles according to Corollary \ref{cor:convex subset embeds into H}.   
The triangle $T_1$ is isosceles with $d_{\widetilde{X}}(v,w_i) = d_{\widetilde{X}}(v,v_r)   = r$. Observe that the respective angles of $T_1$ and $T_2$ at the vertices $w_i$ and $v_r$ satisfy
$$ \measuredangle_{T_1} w_i \ge \measuredangle_{T_2}w_i
\quad  \text{and} \quad
\measuredangle_{T_1} v_r \le \measuredangle_{T_2}v_r.
$$
The hyperbolic law of sines \cite[\S7.12]{beardon2012geometry} implies   
 $$ d_{\widetilde{X}}(x,v_r) \le d_{\widetilde{X}}(x,w_i).$$
Finally since $x \in \mathfrak{F}(v_r)$   we have  that $d_{\widetilde{X}}(x,v) \le d_{\widetilde{X}}(x,v_r)$ and so $x \in \mathfrak{F}(w_i)$.
\end{proof}

Denote $I = \{ i \in \{1,\ldots,n\} \: : \: u_i \in \mathfrak{H} \}$ and $J = \{1,\ldots, n\} \setminus I$.  The above Claim implies that
$\mathcal{W}_r^R  \subset  \bigcap_{j \in J} \mathfrak{F}(u_j)$.
On the other hand  the assumptions of Lemma \ref{lem:hyperbolic geometry lemma} imply that  $B_{\widetilde{X}}(v,R/2) \subset \bigcap_{i \in I} \mathfrak{F}(u_i) $. Recall that the Voronoi cell in question $A_v$ is equal to  $ \bigcap_{i=1}^n \mathfrak{F}(u_i)$. Combining the above statements gives
$ \mathcal{W}_r^R \subset \mathfrak{H} \cap A_v$.
 %The shape $\mathfrak{H} \cap \mathfrak{F}_- \cap \mathfrak{F}_+ \cap B_{\widetilde{X}}(v,R)$ can be embedded into the hyperbolic plane.
%Let $\varphi$ be the largest possible central angle of the  sector $T_\varphi$  based at $v$ and satisfying $T_\varphi \subset  \mathfrak{H} \cap \mathfrak{H}_- \cap \mathfrak{H}_+$. The angle $\varphi$ depends only on the parameter $r$. Denote $ R =\min_{i\in I} d_{\widetilde{X}}(v,u_i)$ and o
%The area of a sector in the plane $\widetilde{X}$ is bounded below by the area of the corresponding sector in the hyperbolic plane. The lemma follows from this observation together with  the area formula of sectors in the hyperbolic plane mentioned above.
\end{proof}

%\subsection*{The Delaunay graph}

%TODO this is united with a previous section, so remove/merge

%Recall that a subset $C$ of a metric space $M$ is called \emph{$r$-covering} if $d(m,C) < r$ for any point $m \in M$.  The subset $C$ is \emph{$r$-separated} if $d(c_1,c_2) \ge r$ for any two distinct $c_1,c_2 \in C$. A maximal $r$-separated set is clearly $r$-covering.

%Assume that the singular set $s(X)$ is non-empty. Consider the   \emph{Delaunay graph} $\widetilde{D}$  corresponding to the above Voronoi tessellation.  This is a planar graph embedded in $\widetilde{X}$ with vertex set being the singular set $s(\widetilde{X})$. Two distinct vertices $v_1, v_2 \in s(\widetilde{X})$ span an edge in $\widetilde{D}$  represented by a geodesic arc in $\widetilde{X}$ if and only if the two cells $A_{v_1}$ and $A_{v_2}$ share a common geodesic side. 

%To see that the Delaunay graph $\widetilde{D}$ is indeed embedded in $\widetilde{X}$ note that two points $v_1$ and $v_2$ of $s(\widetilde{X})$ are connected by an edge in $\widetilde{\mathrm{D}}$ if and only if there exists a closed metric ball $B \subset \widetilde{X}$ with $\mathring{B} \cap s(\widetilde{X}) = \emptyset$ and $ B \cap s(\widetilde{X}) = \{v_1, v_2\}$. This criterion and the convexity of balls in $\mathrm{CAT}(-1)$-spaces imply that two distinct edges of $\widetilde{\mathrm{D}}$ can only intersect at a common point of $s(\widetilde{X})$.

%We will refer to the triangulation constructed in Proposition \ref{prop:Voronoi} as a \emph{Delaunay triangulation} associated to the vertex set $V$.

\section{Cut graphs on branched covers}
\label{sec:cut graphs}

Let $S$ be a compact hyperbolic surface.  Consider an arbitrary  $*$-cover $X $ of $S$. To prove that $S$ is flexibly geometrically stable we need to   construct  a $*$-graph $\Gamma$ on $X$ so that the complement $X \setminus \Gamma$ embeds into some cover of $S$. In addition the  total length of $\Gamma$  should be controlled by the   branching degree of $X$ when both quantities are normalized relative to the size of $X$.  This motivates the following.
%We will be using \emph{cut graphs}.

%\subsection*{Cut graphs}

\begin{definition*}
\label{def:cut graphs}
Let $X$ be a $*$-cover of $S$. A \emph{$c$-cut graph}  for some   $c > 0$  is a $*$-graph $\Gamma$ on $X$ satisfying
\begin{enumerate}
	\item \label{it:Gamma contains singular locus}
	the singular set $s(X)$   is equal to the vertex set $\Gamma^{(0)}  $, 
	\item \label{it:angle of Gamma is at most pi} 
	if $e_1$ and $e_2$ are two edges  incident at the vertex $x \in \Gamma^{(0)}$ and consecutive in the cyclic order on the link of the graph $\Gamma$ at the vertex $x$ induced by its embedding in $X$ then the angle   between $ e_1$ and $e_2$ at $x$  is at most  $\pi$, and
%	\item the edges of $\Gamma$ are combinatorial local geodesics,
%	\label{it:edges of Gamma are combinatorial geodesics}
\item 	\label{it:control of comninatorial length for Gamma}
	the total edge length $l(\Gamma)$   is bounded above by $ c \mathrm{Area}(X)$.
	\end{enumerate}
	A \emph{cut graph} on $X$ is a $c$-cut graph for some $c > 0$.
\end{definition*}
The above Condition $(\ref{it:angle of Gamma is at most pi})$   is equivalent to saying that every connected component of $X \setminus \Gamma$ has locally convex boundary.

The main goal of the current section is the following.
\begin{theorem}
\label{thm:existance of cut graphs}
For every $c > 0$ there is a $\delta = \delta(c) > 0$   such that any $*$-cover $X$ with $\beta(X) < \delta \mathrm{Area}(X)$ admits a $c $-cut graph.
%There is a function $ c:\delta \mapsto c_\delta$ with $c_\delta > 0$ satisfying  $\lim_{\delta \to 0} c_\delta = 0$  such that any $*$-cover $X$ with $\beta(X) \le \delta \mathrm{Area}(X)$ admits a $c_\delta$-cut graph.
\end{theorem}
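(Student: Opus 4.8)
The plan is to build the cut graph as a carefully chosen subgraph of the (geometrically realized) Delaunay graph $q(D)$ on the surface $X$, exploiting its key properties established in Section \ref{sec:hyperbolic planes with singularity}. Recall that by Proposition \ref{prop:the angle between two edges of Delaunay graph is at most pi} every complementary component of the Delaunay graph $D$ in $\widetilde X$ is locally convex; this is exactly Condition (\ref{it:angle of Gamma is at most pi}) in the definition of a $c$-cut graph, so that condition comes for free for $D$ and, with minor care, will persist for the subgraph we select. Condition (\ref{it:Gamma contains singular locus}) also holds for $q(D)$ since the vertex set of $D$ is exactly $s(\widetilde X)$. The entire difficulty is therefore Condition (\ref{it:control of comninatorial length for Gamma}): we must locate inside $q(D)$ a subgraph $\Gamma$ which still contains $s(X)$, still has locally convex complementary components, and whose total edge length is at most $c\,\mathrm{Area}(X)$.

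The mechanism for the length bound is an area-charging argument using the Voronoi cells, driven by Lemma \ref{lem:hyperbolic geometry lemma}. The heuristic is: if $\Gamma$ is forced to be long, then $s(X)$ must be ``dense'' somewhere, but each singular point of degree $d_x$ consumes angle $2\pi d_x$ and the Gauss--Bonnet/area bookkeeping then forces $\mathrm{Area}(X)$ to be large relative to $\beta(X)$, contradicting $\beta(X) < \delta\,\mathrm{Area}(X)$ once $\delta$ is small. More precisely, I would proceed as follows. First, since we are free to rescale or to discard the trivial case, assume $s(X)\neq\emptyset$ so that $s(\widetilde X)$ is co-bounded; then choose a maximal $r$-separated subset and observe that, after passing to the Delaunay structure, every Delaunay edge emanating from a vertex $v$ either is ``short'' (length $\le R$ for a threshold $R$ to be chosen as a function of $c$) or is ``long''. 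Second, I would let $\Gamma_0 = q(D_{\mathrm{short}})$ be the union of all short Delaunay edges together with all of $s(X)$ as vertices; the point is that short edges near a given singular point $v$ contribute length at most (valence of $v$)$\times R$, and the valence of $v$ in the Delaunay graph is controlled in terms of $d_v$ and the separation constant $r$ via the angle bound in Proposition \ref{prop:the angle between two edges of Delaunay graph is at most pi} (consecutive Delaunay edges at $v$ subtend angle $\le\pi$, and the total angle at $v$ is $2\pi d_v$). Summing over $v\in s(X)$, the total length of $\Gamma_0$ is bounded by $\mathrm{const}(r,R)\cdot\beta(X)$. Third — and this is where Lemma \ref{lem:hyperbolic geometry lemma} enters — I must deal with the long edges: for a long Delaunay edge from $v$, there is a large empty ball, hence (taking a supporting half-space $\mathfrak H$ at $v$) a definite-area sector of the Voronoi cell $A_v$, of area $\ge 2\varphi\sinh^2(R/4)$, which is disjoint from the corresponding sector coming from other long edges and which embeds into $X$ via Corollary \ref{cor:Voronoi embeds}. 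This produces a lower bound $\mathrm{Area}(X) \ge \mathrm{const}(r)\sinh^2(R/4)\cdot(\#\text{long edge-ends})$, so long edges are few; but they still need to be cut. The resolution is to route around them: instead of including a long edge, I connect its endpoints through the already-included structure, or simply observe that we are allowed to add short controlled-length arcs — the cleanest route is to include the long Delaunay edge anyway but absorb its length into the area bound via the sector argument, i.e. length of a long edge is $\le$ (some linear function of $R$) and there are at most $\mathrm{Area}(X)\big/\big(\mathrm{const}(r)\sinh^2(R/4)\big)$ of them per unit something, so their total length is at most $\frac{\mathrm{const}(r)\,R}{\sinh^2(R/4)}\,\mathrm{Area}(X)$, which is $\le \tfrac{c}{2}\mathrm{Area}(X)$ once $R=R(c)$ is taken large enough.

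Putting the pieces together: fix $c>0$; choose $R=R(c,r)$ large enough that $\frac{\mathrm{const}(r)\,R}{\sinh^2(R/4)}\le c/2$; then $\Gamma := q(D)$ (or the subgraph of short edges plus all long edges, which is all of $q(D)$) satisfies $l(\Gamma) = l(\Gamma_{\mathrm{short}}) + l(\Gamma_{\mathrm{long}}) \le \mathrm{const}(r,R)\,\beta(X) + \tfrac{c}{2}\mathrm{Area}(X)$. Now choose $\delta=\delta(c)>0$ so small that $\mathrm{const}(r,R)\,\delta \le c/2$; then $\beta(X)<\delta\,\mathrm{Area}(X)$ forces $l(\Gamma)\le c\,\mathrm{Area}(X)$, giving Condition (\ref{it:control of comninatorial length for Gamma}). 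Conditions (\ref{it:Gamma contains singular locus}) and (\ref{it:angle of Gamma is at most pi}) hold by construction using Propositions \ref{prop:properties of Voronoi and Delaunay}--\ref{prop:the angle between two edges of Delaunay graph is at most pi}. (One must also ensure $q(D)$ is connected enough, or rather that its edges are genuinely geodesic arcs and that it is embedded — this is exactly Proposition \ref{prop:Delaunay graph is embedded}.)

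The main obstacle I anticipate is the bookkeeping that simultaneously controls the Delaunay \emph{valence} of each singular vertex $v$ in terms of $d_v$ and makes the separation constant $r$ work uniformly: $r$ cannot be chosen as small as one likes because $\varphi=\varphi(r)\to 0$ as $r\to 0$, so there is a genuine optimization trade-off between "short edges are short enough" (wants $R$ small, i.e. wants many edges declared long) and "long edges are rare" (wants $R$ large relative to $r$, and wants $\varphi(r)$ bounded below). Getting a clean closed-form threshold $R=R(c)$ and the resulting $\delta(c)$ — and verifying that the sectors produced by Lemma \ref{lem:hyperbolic geometry lemma} for distinct long edge-ends at the same vertex are genuinely disjoint (which needs the angle spacing at $v$, again from the $\le\pi$ bound) and that sectors at \emph{different} vertices are disjoint inside $X$ (which needs Corollary \ref{cor:Voronoi embeds} plus the fact that Voronoi cells tile) — is the technical heart of the argument. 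Everything else is a routine assembly of the Section \ref{sec:hyperbolic planes with singularity} toolkit.
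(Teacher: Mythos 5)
Your high-level strategy --- build $\Gamma$ from the Delaunay graph and control its length by charging area against Voronoi cells via Lemma~\ref{lem:hyperbolic geometry lemma} --- is the same as the paper's, but there is a genuine gap in the execution: you take $\Gamma = q(D)$ to be the \emph{entire} Delaunay graph and never discard edges, and the length bound you claim for it does not hold. Two specific steps fail. First, the Delaunay valence at a vertex $v$ is \emph{not} controlled by $d_v$ and the separation constant $r$. Proposition~\ref{prop:the angle between two edges of Delaunay graph is at most pi} (consecutive edges subtend angle $\le \pi$, total angle $2\pi d_v$) only gives the \emph{lower} bound $2d_v$ on the valence, not an upper bound, and a configuration of many $r$-separated singular points placed at a large common distance from $v$ shows the valence can be arbitrarily large for fixed $d_v$ and $r$. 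Second, your estimate for the ``long'' edges multiplies their number by something linear in $R$, but an edge you declared long has length greater than $R$ with no upper bound whatsoever, so this step is simply incorrect. Taken together, the total length of $q(D)$ is not under control and the threshold-$R$ scheme does not close.

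The paper's proof resolves both issues with a pruning step that is absent from your argument, and this is precisely the heart of the proof rather than bookkeeping. For each singular vertex $v$ it first keeps only those incident edges $e$ satisfying $2\varphi\sinh^2(l(e)/4) \le \mathrm{Area}(\mathfrak{C}_e)$ for some half-cell $\mathfrak{C}_e$ at $v$ containing $e$; Lemma~\ref{lem:hyperbolic geometry lemma} shows this set still meets every half-cell at $v$, so Condition~(\ref{it:angle of Gamma is at most pi}) survives. It then passes to a subset $M_v$ that is \emph{minimal} with the half-cell covering property, which forces $|M_v| \le 4d_v$ (at most two kept edges per half-cell, and $A_v$ is covered by $2d_v$ half-cells). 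This is what replaces your unavailable valence bound: the number of edges of $\Gamma$ becomes comparable to $\beta(X)$. For the length bound the paper avoids a fixed threshold altogether: summing the charge inequality over $v$ gives $\varphi\sum_{e \in \mathcal{E}}\sinh^2(l(e)/4) \le 2\,\mathrm{Area}(X)$, and Jensen's inequality (convexity of $\sinh^2(\cdot/4)$) yields $l(\Gamma) \le 4|\mathcal{E}|\arcsinh\bigl(\sqrt{2\,\mathrm{Area}(X)/(\varphi|\mathcal{E}|)}\bigr)$; with $|\mathcal{E}| \le 8\beta(X) < 8\delta\,\mathrm{Area}(X)$ one gets $l(\Gamma) \le c_\delta\,\mathrm{Area}(X)$ with $c_\delta \to 0$ as $\delta \to 0$. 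Note also that your proposal does not address why local convexity of the complement survives restriction to a subgraph of $D$; this is exactly what the half-cell covering condition in the pruning is for, and it cannot be waved away.
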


The graph $\Gamma$ will be constructed by   considering the Delaunay graph  and then  carefully removing some of its edges until the graph-theoretical degree at each vertex  in roughly proportional to the local branching degree.
The function $c$ will depend on the topology and the metric of the surface $S$.

\begin{proof}[Proof of Theorem \ref{thm:existance of cut graphs}]
Let $ p : X \to S$ be a $*$-cover whose branching degree satisfies $\beta(X) <  \delta |X|$ for some sufficiently small $\delta > 0$ to be determined below. Recall that the branching degree $\beta(X)$ is defined to be $ \sum_{v \in s(X)} (d_v-1)$ where $d_v$ is the index of the point  $v$.

We may assume that  the singular set $s(X)$ is non-empty for otherwise the empty graph is a $c$-cut graph for any $c > 0$.
Consider the singular set $s(\widetilde{X}) = q^{-1}(s(X))$. 
Since $s(\widetilde{X}) \subset (p \circ q)^{-1}(*)$ this set is $r$-separated for some constant $r > 0$   depending only on the injectivity radius of the compact surface $S$.

Consider the family of Voronoi cells in $\widetilde{X}$  with respect to the vertex set $s(\widetilde{X})$. The interior of the every cell $A_{\widetilde{v}}$ with $\widetilde{v} \in s(\widetilde{X})$   embeds into $X$   according to Corollary \ref{cor:Voronoi embeds}. Denote $A_v = q(A_{\widetilde{v}})$ where $v \in s(X)$ and $\widetilde{v}$ is any vertex in $q^{-1}(v)$. The family  of the $A_v$'s with $v \in s(X)$ forms a tessellation of $X$ and satisfies
\begin{equation}
 \sum_{v \in s(X)} \mathrm{Area}(A_v) = \mathrm{Area}(X). 
\tag{$\mathrm{I}$}
\end{equation}

Recall that a half-space   in  $\widetilde{X}$ is the closure of a connected component of the complement of some bi-infinite geodesic line in $\widetilde{X}$. A \emph{half-cell} $\mathfrak{C}$   at the vertex  $v \in s(X)$ is $  q(A_{\widetilde{v}} \cap \mathfrak{H})$ for some half-space $\mathfrak{H}$ at any vertex $\widetilde{v} \in q^{-1}(v)$.

Let $\mathrm{D} $ be the geometric realization of the Delaunay graph  in $\widetilde{X}$ with respect to the vertex set $s(\widetilde{X})$. This notion is discussed  in Section \ref{sec:hyperbolic planes with singularity} above. Let $\mathrm{E}$ denote the projection $q(\mathrm{D})$ of the Delaunay graph  to the $*$-cover $X$. It follows from Proposition \ref{prop:Delaunay graph is embedded} that $\mathrm{E}$ is a $*$-graph in $X$. This $*$-graph  satisfies Condition (\ref{it:Gamma contains singular locus}) by its construction and Condition (\ref{it:angle of Gamma is at most pi}) according to Proposition \ref{prop:the angle between two edges of Delaunay graph is at most pi}. 
The required cut graph will be obtained by discarding some of the edges of      $\mathrm{E}$ while making sure Condition (\ref{it:angle of Gamma is at most pi}) continues to hold.

 %embedded in $X$ according Consider the graph $q(D)$ Delaunay triangulation of $X$ with   vertex set $s(X)$ and geodesic edges   as constructed in Proposition \ref{prop:Voronoi} above. 

Let $L_v$ denote the set of edges of the graph $ \mathrm{E}$ incident at the vertex $v \in s(X)$. 
%Every triangle of $\mathrm{D}$  isometric to a triangle in $\widetilde{X}$. Triangles in $\mathrm{CAT}(-1)$ spaces have the sum of their interior angles bounded above by $\pi$. Therefore  the angle at $v$ between any two consecutive edges of $L_v$ is less than or equal to $\pi$. The Voronoi cell $A_v \subset \widetilde{X}$  associated to any lift of $v$ isometrically embeds  into $X$ and can   be identified with a subset of $X$. 
Our next step is to find a subset $M_v$ of $L_v$ of size $|M_v| \le  4d_v$ that continues to satisfy Condition $(\ref{it:angle of Gamma is at most pi})$ of cut graphs and such that 
\begin{equation}
 \varphi  \sum_{e \in M_v} \sinh^2(l(e)/4) \le 2 \mathrm{Area}(A_v) 
\tag{$\mathrm{II}$} \end{equation}
where $\varphi > 0$ is the constant    given in  Lemma \ref{lem:hyperbolic geometry lemma} with respect  to the parameter $r$.

Consider the subset  $K_v$  of $L_v$ consisting of all edges $e$ such that $$  2\varphi\sinh^2(l(e)/4) \le \mathrm{Area}(\mathfrak{C}_e  )$$ holds for \emph{some} half-cell $\mathfrak{C}_e$   at $v$ containing the edge $e$ in its interior.
 We claim that $K_v$ satisfies Condition $(\ref{it:angle of Gamma is at most pi})$ of cut graphs.  If this was not the case then there would exist a half-cell $\mathfrak{C} $  at $v$  which does  not contain any edge of $K_v$ in its interior.  However  at least one edge $e \in L_v$  is contained in $\mathfrak{C}$ and   every such edge is therefore not in $K_v$. Therefore   $  2\varphi\sinh^2(l(e)/4) > \mathrm{Area}(\mathfrak{C} )$ for every edge $e \in L_v$ contained in the half-cell $\mathfrak{C}$. This stands in contradiction to Lemma \ref{lem:hyperbolic geometry lemma}.
 
Take $M_v$ to be a  subset of $ K_v$ minimal with respect to containment  that still satisfies Condition $(\ref{it:angle of Gamma is at most pi})$ of cut graphs. The minimality of $M_v$ implies that  any half-cell $\mathfrak{C}$  at $v$  contains at most two edges from $M_v$. Since $A_v$ is the union of $2d_v$ half-cells at $v$  it follows that $|M_v| \le 4d_v$. Therefore any point $x$ of the Voronoi cell $A_v$   belongs to at most four half-cells from the family $\{\mathfrak{C}_e\}_{e\in M_v}$. Equation $(\mathrm{II})$ follows.

%  Let $\mathrm{T} $ be any  spanning subtree of the graph $\mathrm{E}$. The required branch graph $\Gamma$ will be constructed as a subgraph of $\mathrm{E}$ containing $\mathrm{T}$. In particular $\Gamma$ will be connected so that conditions (\ref{it:Gamma is connected}) as well as (\ref{it:Gamma contains singular locus}) are satisfied.

Consider the sub-$*$-graph $\Gamma$ of the $*$-graph $\mathrm{E}$ embedded in $X$ with vertex set $s(X)$ and edge set consisting  of $\bigcup_{v\in s(X)} M_v$.  Observe that Conditions   (\ref{it:Gamma contains singular locus}) and $(\ref{it:angle of Gamma is at most pi})$ of cut graphs hold true. 
It remains to bound the total length $l(\Gamma)$ from above as in Condition (\ref{it:control of comninatorial length for Gamma}) of cut graphs and to determine the precise value of $\delta$.
%correctly define the function $c : \delta \mapsto c_\delta$. 

Let $\mathcal{E}$ denote the set of edges of $\Gamma$. Since $\frac{1}{2}\Sigma_{v \in V} |M_v| \le |\mathcal{E}| \le \Sigma_{v \in V} |M_v|$ and $2d_v \le |M_v| \le 4d_v$ for every $v \in V$    we have  that 
$$ \beta(X) \le  \beta(X) + |s(X)|    \le |\mathcal{E}| \le 4(\beta(X) + |s(X)|) \le 8\beta(X). $$
Making use of the area formula  $(\mathrm{I})$ and summing   equation $(\mathrm{II})$ over all vertices $v \in s(X)$  gives
$$  \varphi   \sum_{e \in \mathcal{E}}   \sinh^2(l(e)/4) \le  2\mathrm{Area}(X). $$
The function $\sinh^2(\cdot / 4)$ is convex. Jensen's inequality gives
$$ \varphi \sinh^2\left(\frac{l(\Gamma)}{4|\mathcal{E}|}\right) \le
 \frac{2\mathrm{Area}(X)}{|\mathcal{E}|}   $$
%  \frac{\mathrm{Area}(S)|X|}{|\mathcal{E}|}  $$
and after rearranging
$$ l(\Gamma) \le 4|\mathcal{E}| \arcsinh \left(\sqrt{\frac{2}{\varphi} \frac{\mathrm{Area}(X)}{|\mathcal{E}|}}\right).$$

To conclude the proof define the function $c : \delta \mapsto c_\delta $ by
$$ c_{\delta} = 32 \delta \arcsinh\left(\sqrt{ \frac{2}{\varphi \delta}}\right).$$
Note that $c $ is monotone and that  $\lim_{\delta \to 0} c_\delta = 0$. Finally observe that 
$$ l(\Gamma)   \le c_{\frac{ \beta(X)}{\mathrm{Area}(X)}} \mathrm{Area}(X) \le c_{\delta} \mathrm{Area}(X). $$
Therefore $\Gamma$ is a $c$-cut graph provided $\delta$ is sufficiently small so that $c_\delta < c$.
\end{proof}

\section{Boundaries and non-separating closed curves}

Let $S$ be a compact hyperbolic surface with a non-separating  family of disjoint simple curves. We study a necessary and a sufficient condition due to Walter D. Neumann \cite[Lemma 3.2]{neumann2001immersed} for the existence of a cover for $S$ such that the preimages of the given family of curves have prescribed degrees.

\subsection*{$1$-manifolds} A  connected  closed  \emph{$1$-manifold} is homeomorphic to $S^1$. A closed compact $1$-manifold is a disjoint union of finitely many homeomorphic copies of $S^1$.    The set of the connected components of a $1$-manifold $B$ is denoted $\pi_0(B)$.   $1$-manifolds are orientable and admit $2^{|\pi_0(B)|}$ orientations. 

The boundary $\partial F$ of a compact surface $F$ is a closed $1$-manifold. If the surface $F$ is oriented then $\partial F$ inherits an induced orientation.

\subsection*{The Neumann lemma} 
We use $\chi(F)$ to denote the Euler characteristic of the surface $F$.  Fix a degree $N \in \NN$.
\editA{

\begin{lemma}[{\cite[Lemma 3.2]{neumann2001immersed}}]
\label{lem:Neumann}
Let $F$ be a compact connected orientable surface of positive genus. Let $ p : B \to \partial F$ be a covering map for some closed $1$-manifold $B$. 
The following two statements are equivalent:
\begin{enumerate}
    \item There exist a connected surface $R$ with $\partial R \cong B$ and a covering map $r:R \to F$  of degree $N$ with $r_{|B} = p$.
    \item $|\pi_0(B)|$ has the same parity as  $\chi(F) N$ and for every connected component $\gamma$ of $\partial F$  the total degree of the covering map $p$ on ${p^{-1}(\gamma)}$   is   $N$.  
\end{enumerate}
The implication (1)$\implies$(2) holds true even without assuming that  $F$ has positive genus, or that $R$ is connected.
\end{lemma}

% \begin{lemma}[{\cite[Lemma 3.2]{neumann2001immersed}}]
% \label{lem:Neumann}
% Let $F$ be a compact connected orientable surface. Let $ p : B \to \partial F$ be a covering map for some closed $1$-manifold $B$. 

% Assume that there exists a connected surface $R$ with $\partial R \cong B$ and a   covering map $r:R \to F$  of degree $N $   with $r_{|B} = p$. 
% Then   $|\pi_0(B)|$ has the same parity as  $\chi(F) N$ and for every connected component $\gamma$ of $\partial F$  the total degree of $p$ on ${p^{-1}(\gamma)}$   is   $N$.  

% If the surface $F$ has positive genus then the converse direction  holds as well.
% \end{lemma}

We remark that in  \cite{neumann2001immersed} the equivalence of (1) and (2) is proved under the assumption of positive genus. However, the proof of the direction (1)$\implies$(2) clearly applies in any genus.
}
%{\em Original version:}
%\begin{lemma}[Neumann]
%\label{lem:Neumann-orig}
%Let $F$ be a compact orientable surface of genus $g \ge 1$  with $b \in \NN$ boundary curves $\beta_1, \ldots, \beta_b$. Fix some $N \in \NN$. For every $ i \in \{1,\ldots,b\}$ fix some $n_i \in \NN$ and $a_{i,1},\ldots,a_{i,n_i} \in \NN$ such that
%$ \sum_{j=1}^{n_i}a_{i,j} = N$. 
%
%Then the surface $F$ admits a connected cover $F'$ of degree $N$ such that every boundary curve $\beta_i$ in $F$ is covered by $n_i$ boundary curves in $F'$ of the prescribed degrees $a_{i,1},\ldots,a_{i,n_i}$ if and only if $\sum_{i=1}^b n_i$ has the same parity as  $\chi(F) N$.
%\end{lemma}
%

We obtain the following    consequence of Lemma \ref{lem:Neumann}. It will be used towards the proof of Theorem \ref{thm:variant of LERF} presented  in Section \ref{sec:capping off}. 

\begin{cor}
  \label{cor:prescribed boundary}
Let $S$ be a closed  hyperbolic surface. Let $p : B \to A$ be a covering map of closed compact and oriented $1$-manifolds where $A$ is embedded and is non-separating in $S$.
  Then there exist
  \begin{itemize}
  \item a covering map $r : C \to S$ and
  \item \editA{an oriented subsurface $R$ of $C$ such that $\partial R \cong B$ as an oriented $1$-manifold with the induced orientation and with $r_{|B} = p$}
  \end{itemize}
  if and only if 
$|\pi_0(B)|$ is even  and $ B = B^+ \cup B^-$ where
\begin{itemize}
\item $p^+ = p_{|B^+}$ is orientation preserving,  
\item $p^- = p_{|B^-}$ is orientation reversing and  
\item for every connected component $\alpha$ of $A$ the total \editA{covering} degrees of $p^+$ and of $p^-$ on $p^{-1}(\alpha)$ are equal to each other.
% \footnote{\editA{Here by degree we mean the degree of the cover. As degrees of maps between oriented manifolds, we require that the total degree of $p^+$ and that of $p^-$  over each component of $A$ will be opposite.}}
% \editB{{\em degrees are unsigned? as signed degrees they are opposite}}
\end{itemize}

If the latter conditions are satisfied then the cover  $r$ can be taken of degree equal to \editA{four times} the maximum of the total degree of $p$ over any connected component of $A$.
\end{cor}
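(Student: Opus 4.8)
The plan is to deduce the corollary from Neumann's Lemma \ref{lem:Neumann}, with the main work being the translation between the "oriented $1$-manifold" bookkeeping of the corollary and the unoriented covering-of-a-surface statement of the lemma. First I would observe that since $A$ is embedded and non-separating in the closed surface $S$, cutting $S$ along $A$ produces a compact connected orientable surface $F$ of positive genus with $\partial F$ consisting of two copies $A^+$ and $A^-$ of $A$, one for each side; the orientation of $S$ induces an orientation on $F$, hence an induced (boundary) orientation on $\partial F = A^+ \sqcup A^-$ in which, say, $A^+$ carries the chosen orientation of $A$ and $A^-$ carries the reversed one. A cover $r : C \to S$ together with a connected subsurface $R \subset C$ with $\partial R \cong B$ and $r_{|B} = p$ is then essentially the same data as a connected cover $r' : R \to F$: given $R \subset C$ one cuts along $r^{-1}(A) \cap R$; conversely, given a cover $R \to F$ one can glue $R$ to a complementary cover of $F$ along the boundary to build the closed cover $C$ — and such a complementary cover exists precisely when the boundary degree conditions of Lemma \ref{lem:Neumann} are met on the other piece, which is automatic once they are met on $R$. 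So the content is: such $(C,R)$ exist iff there is a connected cover $R \to F$ of some degree $N$ restricting to the prescribed map on the boundary.

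Next I would decompose $B$ according to how its components sit over $A$. Writing $B = B^+ \sqcup B^-$ where $B^+$ (resp. $B^-$) consists of those components on which $p$, with the given orientations on $B$ and $A$, is orientation preserving (resp. reversing), the requirement that $\partial R$ receive the induced boundary orientation forces $B^+$ to map to $A^+$ and $B^-$ to map to $A^-$. Thus the covering map $B \to \partial F$ that Lemma \ref{lem:Neumann} needs is exactly $p^+ \sqcup p^-$ with $p^+ : B^+ \to A^+$ and $p^- : B^- \to A^-$. Applying Lemma \ref{lem:Neumann} to $F$ and this covering of $\partial F$: a connected cover $R \to F$ of degree $N$ with the prescribed boundary exists iff the total degree over each component of $\partial F$ equals $N$ and $|\pi_0(B)|$ has the same parity as $\chi(F)N$. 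The total-degree condition over the two copies $A^+,A^-$ of a given component $\alpha$ of $A$ says exactly that $\deg(p^+)$ over $\alpha$ and $\deg(p^-)$ over $\alpha$ both equal $N$; in particular they are equal to each other, and $N$ is forced to be the common value, which (taking $N$ maximal, or rather: forced) is the total degree of $p$ over $\alpha$ — this is where the "$r$ can be taken of degree equal to the maximum of the total degree of $p$ over any connected component of $A$" clause comes from, the maximum being needed only because a priori different components of $A$ could have different total $p$-degree, but the equality $\deg p^+ = \deg p^-$ on each component together with connectivity of $S$ actually pins things down; I would handle the possibility of unequal total degrees over different components by passing to a suitable multiple, or note it cannot occur.

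Finally I would check the parity condition. Since $S$ is a closed hyperbolic surface, $\chi(S) = \chi(F)$ is even (indeed $\chi(F) = \chi(S) = 2-2g < 0$), so $\chi(F)N$ is even, and the condition "$|\pi_0(B)|$ has the same parity as $\chi(F)N$" becomes simply "$|\pi_0(B)|$ is even" — matching the statement of the corollary. Conversely, given the hypotheses of the corollary ($|\pi_0(B)|$ even; $B = B^+\cup B^-$ with $p^\pm$ having equal total degrees over each component of $A$), I set $N$ to be that common total degree, verify both hypotheses of Lemma \ref{lem:Neumann} hold for $(F, p^+\sqcup p^-)$, obtain the connected cover $R \to F$, and reglue to get $(C,R)$; the induced boundary orientation on $\partial R$ agrees with $B$ by construction of the decomposition $B^\pm \to A^\pm$. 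The main obstacle I anticipate is getting the orientation conventions exactly right — which side of $A$ is $A^+$, how the induced orientation on $\partial F$ interacts with the chosen orientation of $A$, and ensuring the regluing of $R$ to its complement along the boundary is orientation-compatible so that $C$ is a genuine (orientable) cover of $S$; this is routine but must be done carefully, and is the only place where something could go wrong.
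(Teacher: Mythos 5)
Your overall strategy --- cut $S$ along $A$, apply Neumann's lemma, and use that $\chi(\bar{S}) = \chi(S)$ is even to turn the parity condition into ``$|\pi_0(B)|$ is even'' --- matches the paper's, and that parity simplification is handled correctly. But there are two genuine gaps. First, the asserted equivalence between ``$(C,R)$'' and ``a connected cover $R \to F$'' is false in the ``only if'' direction: the subsurface $R$ may contain components of $r^{-1}(A)$ in its \emph{interior}, so cutting along $r^{-1}(A)$ yields a generally disconnected cover of $\bar{S}$ whose boundary is $B$ together with the doubled interior curves, not $B$ alone, and $R$ itself is not a cover of $F$. To extract the degree condition on $B$ one instead cuts the whole cover $C$ along $r^{-1}(A)$, applies the ``only if'' part of Neumann's lemma to each component of $\bar{C} \to \bar{S}$, and observes that the doubled interior curves occur in matched pairs contributing equal total degree over $\bar{A}^+$ and $\bar{A}^-$ and hence cancel; this cancellation is what forces the degrees of $p^+$ and $p^-$ over each component $\alpha$ to agree, and it is the essential content of the forward direction, which your plan omits.

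Second, the common degree $N_\alpha$ of $p^\pm$ over $\alpha$ genuinely can vary with $\alpha$ (the hypotheses only equate the $+$ and $-$ degrees component-by-component), so ``note it cannot occur'' is wrong, and ``passing to a suitable multiple'' does not produce a cover whose degree is the stated maximum. Neumann's lemma simply does not apply to $(\bar{S}, B^+\sqcup B^-)$ when the total degree over different boundary components of $\bar{S}$ differs. The paper's fix is to pad the boundary prescription: for each $\alpha$ with $N_\alpha < M = \max_\alpha N_\alpha$, adjoin a pair of circles $D_\alpha^{\pm}$ over the two sides of $\alpha$, each covering with degree $M - N_\alpha$. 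Only after padding does Neumann's lemma yield a connected $\bar{R} \to \bar{S}$ of degree $M$; a second application with the $\pm$ sides swapped via the involution $\tau$ produces a complementary cover $\bar{R}'$; and gluing $\bar{R}$ to $\bar{R}'$ along both the $B$-boundaries and the padded $D$-boundaries closes up to a cover $C$ of degree $2M$, which is the maximum of the total degree of $p$. Your plan gestures at ``glue $R$ to a complementary cover,'' but constructs neither the padding nor the complement, and the claim that the degree and parity conditions for the complementary piece are ``automatic'' is precisely what the $\tau$-twist argument is there to verify.
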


We   consider a   topological reduction needed to apply Neumann's lemma towards the proof of  Corollary \ref{cor:prescribed boundary}. The main issue is to allow for the degree of $p$ to vary over the different connected components $\alpha$ of $A$.

 Let $\bar{S}$ denote   the completion of $S \setminus A$. In particular $\bar{S}$ is a compact connected orientable surface with boundary. Consider the       quotient map $f : \bar{S} \to S$ with $$f(\partial \bar{S}) = A.$$
 
  Choose   orientations for the two surfaces $S$ and $\bar{S}$ such that   $f$ is orientation preserving on the interior of $\bar{S}$.
Equip   $\partial \bar{S}$ with the  boundary orientation induced from that of $\bar{S}$.
 In particular   
 $$\partial \bar{S} = \bar{A}^+ \amalg \bar{A}^-$$
 in such a way that the restriction of $f$ to $\bar{A}^+$ and to $\bar{A}^-$ is, respectively,  orientation preserving and orientation reversing. 
 
Observe that if  $q : \bar{C} \to \bar{S}$ is  any orientation preserving   cover and $\bar{E} \subset \partial \bar{C}$ is any connected closed $1$-manifold then the composition $f \circ q : \bar{E} \to A$ is orientation preserving if and only if $q(\bar{E}) \subset \bar{A}^+$. 
Finally let $\tau : \partial \bar{S} \to \partial \bar{S}$  be the homeomorphism determined  by 
$$\tau(\bar{A}^+) = \bar{A}^- \quad \text{and} \quad f \circ \tau = f.$$

%This reduction is summarized in the following diagram.
%$$
%\begin{tikzcd}[column sep = large]
%E \subset \partial Q \arrow[r, hook] \arrow[ddr, bend right, "q_{|E}"] & Q %\arrow[d,"q"] \\
%&   F \arrow[r, "f"]  
%    & S  \\
%&  A^+ \cup A^- = \partial F  \arrow[r, "f_{|\partial F}"  ] \arrow[u, hook] %\arrow[loop below, "\tau"]
%&A \arrow[u, hook]
%\end{tikzcd}
%$$
\begin{figure}
	\begin{center}
		\def\svgwidth{0.5\textwidth}
\begingroup%
\makeatletter%
\providecommand\color[2][]{%
	\errmessage{(Inkscape) Color is used for the text in Inkscape, but the package 'color.sty' is not loaded}%
	\renewcommand\color[2][]{}%
}%
\providecommand\transparent[1]{%
	\errmessage{(Inkscape) Transparency is used (non-zero) for the text in Inkscape, but the package 'transparent.sty' is not loaded}%
	\renewcommand\transparent[1]{}%
}%
\providecommand\rotatebox[2]{#2}%
\ifx\svgwidth\undefined%
\setlength{\unitlength}{196.01929912bp}%
\ifx\svgscale\undefined%
\relax%
\else%
\setlength{\unitlength}{\unitlength * \real{\svgscale}}%
\fi%
\else%
\setlength{\unitlength}{\svgwidth}%
\fi%
\global\let\svgwidth\undefined%
\global\let\svgscale\undefined%
\makeatother%
\begin{picture}(1,1.23309083)%
\put(0,0){\includegraphics[width=\unitlength,page=1]{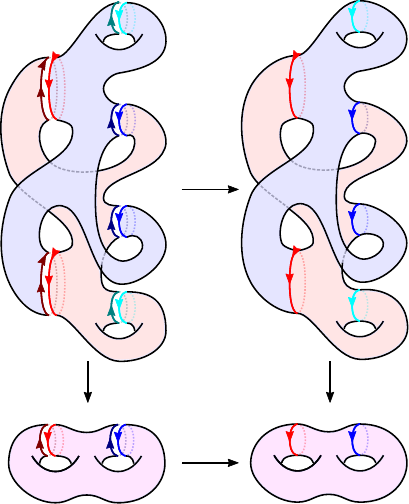}}%
\put(0.70947401,0.21391257){\color[rgb]{0,0,0}\makebox(0,0)[lb]{\smash{{\scriptsize$\alpha$}}}}%
\put(0.07657135,0.21043257){\color[rgb]{0,0,0}\makebox(0,0)[lb]{\smash{{\scriptsize$\alpha^-$}}}}%
\put(0.13396366,0.21043257){\color[rgb]{0,0,0}\makebox(0,0)[lb]{\smash{{\scriptsize$\alpha^+$}}}}%
\put(0.69338406,0.42485605){\color[rgb]{0,0,0}\makebox(0,0)[lb]{\smash{{\scriptsize$B^-_\alpha$}}}}%
\put(0.69338406,1.13269459){\color[rgb]{0,0,0}\makebox(0,0)[lb]{\smash{{\scriptsize$B^+_\alpha$}}}}%
\put(0.86252021,0.21391279){\color[rgb]{0,0,0}\makebox(0,0)[lb]{\smash{{\scriptsize$\beta$}}}}%
\put(0.24874827,0.21043257){\color[rgb]{0,0,0}\makebox(0,0)[lb]{\smash{{\scriptsize$\beta^-$}}}}%
\put(0.30614055,0.21043257){\color[rgb]{0,0,0}\makebox(0,0)[lb]{\smash{{\scriptsize$\beta^+$}}}}%
\put(0.88257439,0.74983004){\color[rgb]{0,0,0}\makebox(0,0)[lb]{\smash{{\scriptsize$B^+_{\beta}$}}}}%
\put(0.88257439,0.99853012){\color[rgb]{0,0,0}\makebox(0,0)[lb]{\smash{{\scriptsize$B^-_{\beta}$}}}}%
\put(0.29163791,1.24747943){\color[rgb]{0,0,0}\makebox(0,0)[lb]{\smash{{\scriptsize  $\bar{D}_{\beta}$}}}}%
\put(0.31076869,0.46311796){\color[rgb]{0,0,0}\makebox(0,0)[lb]{\smash{{\scriptsize $\bar{D}'_{\beta}$}}}}%
\put(0.02380713,0.61616382){\color[rgb]{0,0,0}\makebox(0,0)[lb]{\smash{}}}%
\put(0.02380713,0.65442561){\color[rgb]{0,0,0}\makebox(0,0)[lb]{\smash{$\bar{R}$}}}%
\put(0.02380713,0.90312569){\color[rgb]{0,0,0}\makebox(0,0)[lb]{\smash{$\bar{R}'$}}}%
\put(0.61474364,0.61591469){\color[rgb]{0,0,0}\makebox(0,0)[lb]{\smash{$R$}}}%
\put(0.84643025,0.31007165){\color[rgb]{0,0,0}\makebox(0,0)[lb]{\smash{}}}%
\put(0.2342456,0.29094103){\color[rgb]{0,0,0}\makebox(0,0)[lb]{\smash{$q$}}}%
\put(0.78903795,0.04224096){\color[rgb]{0,0,0}\makebox(0,0)[lb]{\smash{$S$}}}%
\put(0.19598405,0.04224096){\color[rgb]{0,0,0}\makebox(0,0)[lb]{\smash{$\bar{S}$}}}%
\put(0.82729941,0.29094103){\color[rgb]{0,0,0}\makebox(0,0)[lb]{\smash{$r$}}}%
\put(0.50207635,0.11876411){\color[rgb]{0,0,0}\makebox(0,0)[lb]{\smash{$f$}}}%
\put(0.7699071,1.20921785){\color[rgb]{0,0,0}\makebox(0,0)[lb]{\smash{$C$}}}%
\put(0.1577225,1.20921785){\color[rgb]{0,0,0}\makebox(0,0)[lb]{\smash{$\bar{C}$}}}%
\put(0.50207635,0.78834107){\color[rgb]{0,0,0}\makebox(0,0)[lb]{\smash{$g$}}}%
\put(0.0429379,0.539641){\color[rgb]{0,0,0}\makebox(0,0)[lb]{\smash{{\scriptsize $\bar{B}^-_\alpha$}}}}%
\put(0.13859174,1.01791019){\color[rgb]{0,0,0}\makebox(0,0)[lb]{\smash{{\scriptsize $\bar{B}^+_\alpha$}}}}%
\put(0.31076869,0.67355646){\color[rgb]{0,0,0}\makebox(0,0)[lb]{\smash{{\scriptsize $\bar{B}^+_{\beta}$}}}}%
\put(0.21511481,0.92225631){\color[rgb]{0,0,0}\makebox(0,0)[lb]{\smash{{\scriptsize $\bar{B}^-_{\beta}$}}}}%
\end{picture}%
\endgroup%
		\caption{ The maps $r$ and $q$ are covers, the maps $f$ and $g$  are quotients, and the diagram is commutative.
					In this example $A = \{\alpha, \beta\}$ and $\bar{A}^\pm = \{\alpha^\pm, \beta^\pm\}$. The curve $\alpha$ satisfies $N_\alpha = M$ and the curve $\beta$ satisfies $N_\beta < M$.}
		\label{fig:fancy drawing}
	\end{center}

\end{figure}
\vspace{10pt}
\begin{center}
\textit{The main ideas of the following proof are summarized in Figure \ref{fig:fancy drawing}.}
\end{center}
\begin{proof}[Proof of Corollary \ref{cor:prescribed boundary}]

%Given a connected closed $1$-manifold  $D$ and a map $g : D \to \partial F$ the composition $f \circ g$ is orientation-preserving if and only if $g(D) \subset A^+$.
%
%A map of $1$-manifolds $B \to \partial F$ is orientation preserving    with induced orientations so that the     natural homeomorphisms   of $A^+$ and  of $A^-$ with $A$  are, respectively, orientation preserving and orientation reversing.

\emph{In one direction.} Consider a cover $r :C \to S$   and a  subsurface $R$ of $C$  with $\partial R \cong B $   as in the statement of Corollary \ref{cor:prescribed boundary}.  

Let $\bar{C}$ denote the compact surface with boundary obtained as the completion of $C \setminus r^{-1}(A)$. Note that $\bar{C}$ is  disconnected.   There is a natural quotient map $g : \bar{C} \to C$ as well as a covering map $q : \bar{C} \to \bar{S}$ satisfying 
$$f \circ q = r\circ g. $$
 
Denote $\bar{E} = \partial \bar{C}$ so that   $\bar{E}$ is a compact closed $1$-manifold  and $f \circ q  : \bar{E} \to A$ is a covering map of $1$-manifolds. In particular $\partial R \cong B$ is a sub-$1$-manifold of $g(\bar{E})$.
 
%$$ 
%\begin{tikzcd}[column sep = large]
% D= \partial Q \arrow[r, hook] \arrow[ddrrr, bend right,   "f \circ q_{|D}"] & Q \arrow[r, "g"] \arrow[d,"q"] & R \arrow	[r, hook] &  C \arrow[d, "r"] \\
%&  F \arrow[rr,"f"] & & S \\
%& & & A \arrow[u, hook]
%\end{tikzcd}
%$$

Let $\bar{R}$ be the subsurface of $\bar{C}$ corresponding to $R$.
We may apply    Lemma \ref{lem:Neumann}
to the restricted covering map $q : \bar{R} \to \bar{S}$ and to the boundary $\partial \bar{R} = \bar{R} \cap \bar{E}$. The result follows in this direction by letting 
$$ \text{$B^+ = g(q^{-1}(\bar{A}^+)) \cap B$ \quad and \quad $B^- = g(q^{-1}(\bar{A}^-)) \cap B$}. $$

Observe that the restriction of $r$ to $B^+$ and to $B^-$ is orientation preserving and orientation reversing respectively. The connected components of the $1$-manifold $\bar{D} =  \bar{E} \setminus g^{-1}(\partial R)$ occur in matching pairs $\delta^+$ and $\delta^-$ with $g(\delta^+) = g(\delta^-)$.   Therefore the total degrees  of $p^+ = p_{|B^+}$ and of $p^- = p_{|B^-}$ agree over every connected component of $A$,  and $|\pi_0(B)|$ is even.

\emph{In the other direction.}  
\editA{Assume first that the surface $\bar S$ has positive genus.}
Assume that $|\pi_0(B )|$ is even and that there exists a decomposition $B = B^+ \cup B^-$  as in the statement. 
\editA{For each connected component $\alpha$ of $A$, 
let $N_\alpha \in \mathbb{N}$ denote the common total degree of $p^+$ and of $p^-$ over $\alpha$.} Denote $M = \max N_\alpha$ taken over all such $\alpha$'s.

\editA{
We first enlarge $B$ to a 1-manifold $\bar E$ with a covering map $t:\bar E\to \partial\bar S$ such that over every component of $A$ the degree is the same. If $\alpha$ is a component of $A$ satisfying $N_\alpha < M$, let $D_\alpha$ denote the disjoint union of two circles $D_\alpha^+$ and $D_\alpha^-$, equipped with covering maps $t_\alpha^\pm:D_\alpha^\pm \to \bar\alpha^\pm$ of degree $M-N_\alpha$ (where $\bar\alpha^\pm$ are the two components of $f^{-1}(\alpha)$). 
Denote $D = D^+ \cup D^-$ where $D^\pm = \bigcup_\alpha D^\pm_{\alpha}$ with the union being taken over all components $\alpha$ of $A$ satisfying $N_\alpha< M$. 
 
Let $\bar E$ be the disjoint union of $B$ and $D$, and let $t:\bar E\to \partial\bar S$ be the covering map defined by $t_\alpha^\pm$ on each component $D_\alpha^\pm$ of $D$, and on $B$ by
$$t(B^+ ) = \bar A^+, \quad t(B^-) = \bar A^-, \quad \text{and} \quad f \circ t = p.$$

% {\em (I wasn't sure why $\bar B=B$ and $\bar D=D$ was helpful here)}
}

The covering map $ t$ from $\bar{E}$ to $\partial \bar{S}$  has total degree $M$ over every connected component of $\partial \bar{S}$. We may apply Lemma \ref{lem:Neumann} with respect to the surface $\bar{S}$ and the covering map $t$ to obtain  a connected   surface $\bar{R}$ with $\partial \bar{R} \cong \bar{E}$ and a covering map $q  : \bar{R} \to \bar{S}$ of degree $M$     with $q_{|\bar{E}} =  t$. \editA{Note that the parity hypothesis of Lemma \ref{lem:Neumann} is satisfied since both $|\pi_0(B)|$ and $\chi(\bar S)$ are even.}

Let \editA{$\bar{E}' = B' \cup D'$} be a homeomorphic copy of the $1$-manifold $\bar{E}$ considered with the twisted covering map $t' = \tau \circ t$. This covering map $t'$ has degree $M$. We may apply Lemma \ref{lem:Neumann} again with respect to the covering map $t'$ to obtain a corresponding surface cover $q' : \bar{R}' \to \bar{S}$.

Denote
$ \bar{C}= \bar{R}\cup\bar{R}'$. Allowing for a slight abuse of notation, let $q : \bar{C}\to \bar{S}$ denote the union of the two covering maps $q : \bar{R}\to \bar{S}$ and $q' : \bar{R} \to \bar{S}$.
There is  a quotient map $g : \bar{C} \to C$ defined by identifying boundary   components of $\bar{C}$ as follows. \editA{Identify $D^+$ with $D^-$ and  $D'^+$ with $D'^-$. Moreover identify $B^+$ with $B'^-$ and $B'^+$ with $B^-$}. The identification is performed respecting the covering map $q$. In particular, there exists a resulting covering map $r : C \to S$ with $r \circ g = f \circ q$. 

The cover $C$ has degree $2M$, which is equal to the maximal total degree of $p$. Let $R = g(\bar{R})$. This completes the proof of this direction under the assumption that $\bar S$ has positive genus.

\editA{Now, assume that the surface $\bar S$ has genus 0. Let $S'$ be the surface obtained by splitting $S$ along a curve $\alpha\in A$ and gluing two copies of the resulting surface along their boundaries. Let $q':S'\to S$ be the  obvious covering map mapping the two copies back to $S$.  Each curve in $A$ lifts to two curves in $S'$. Let $A' \subset q'^{-1}(A)$ be a collection of curves containing exactly one lift for each curve in $A$. Note that $q'_{|A'}:A'\to A$ is a homeomorphism as each curve is covered with degree 1. The completion $\bar S'$ of $S'\setminus A'$ is connected and has positive genus.

Denote by $p':B\to A'$ the map $p'=(q'_{|A'})^{-1} \circ p$.
The surface $\bar S'$ has positive genus and so the previous argument can be applied to get a covering map $r' : C \to S'$ of degree $2M$ and an oriented subsurface $R$ of $C$ such that $\partial R \cong B$ and $r'_{|B} = p'$. The composition $r = q' \circ r' : C \to S$ is as required, and has degree $4M$.}
% Analogously,  
%apply Lemma \ref{lem:Neumann} again with respect to the modified $1$-manifold cover   to obtain the degree $M$ cover . Let 
%$$ 
%\begin{tikzcd}[column sep=large]
%R^+ \arrow[d,"r^+"] \arrow[r,hookleftarrow]& B \cup D \arrow[d,"q \cup t"] \arrow[r, hookrightarrow] & R^- \arrow[d,"r^-"] \\ 
%F \arrow[rr,bend right, equal] \arrow[r,hookleftarrow,"\mathrm{id}"] & A^+ \cup A^-\arrow[r, hookrightarrow, "\tau"] & F
%\end{tikzcd}
%$$
% Let  $C$ be the surface obtained by identifying $D^+$ with $D^-$ on   each surface $R^+$ and $R^-$,   and identifying $B^+$ with $B^-$ on   opposite surfaces $R^+$ and $R^-$.
%the four identifications
%$$ \partial R^+ \cap D^+ \cong \partial R^+ \cap D^-, \,  \partial R^- \cap D^+ \cong \partial R^- \cap D^-, \,  \partial R^+ \cap B^+ \cong \partial R^- \cap B^-, \,  \partial R^+ \cap B^- \cong \partial R^- \cap B^+$$
%The identification is taken respecting the two covering maps $r^+$ and $r^-$. Let $r$ denote the resulting covering map $r : C \to S$ with $r_{|R^+} = f \circ r^+$ and $r_{|R^-} = f \circ r^-$. 
\end{proof}

\section{Quantitatively capping off surfaces with boundary}
\label{sec:capping off}
%Let $\mathcal{R}$ be a closed convex fundamental domain for the action of $\pi_1(S)$ on $\widetilde{S}$. The fundamental domain $\mathcal{R}$ defines a tessellation of $\widetilde{S}$.

Let $S$ be a closed hyperbolic surface of genus  $g \ge 2$. The goal of the current section is to prove Theorem \ref{thm:variant of LERF}, restated below for the reader's convenience.

\begin{theorem*}
 Let $R$ be a   surface with boundary which is isometrically embedded in some cover of $S$.
 If the boundary   $\partial R$  is  locally convex then  $R$ can be isometrically embedded in a cover $Q$ of $S$ making Diagram (\ref{eq:commute R}) commute and  such that
$$\mathrm{Area}(Q \setminus R ) \le  b_S l(\partial R)$$
where $b_S > 0$ is a constant depending only on $S$.
\end{theorem*}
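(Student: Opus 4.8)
The plan is to reduce the problem to Corollary \ref{cor:prescribed boundary} by taking a suitable ``collar'' along $\partial R$ and a non-separating curve system in $S$. Here is the outline.

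\textbf{Step 1: Reduce to the case where $\partial R$ is a union of closed geodesics.} Since $\partial R$ is locally convex, each boundary component is a piecewise geodesic curve bending away from $R$. I would enlarge $R$ slightly by attaching, along each boundary component, a region bounded by that curve and the closed geodesic in its free homotopy class inside the ambient cover; because the component is locally convex and the ambient space is $\mathrm{CAT}(-1)$, this homotopy is realized by a region of area bounded by a constant times the length of the boundary component (standard hyperbolic geometry: the region between a locally convex curve and its geodesic representative has area at most $l(\partial R)$ up to a universal constant). Replacing $R$ by this slightly larger surface $R'$, we have $\mathrm{Area}(R' \setminus R) \le b_S' \, l(\partial R)$, and it suffices to cap off $R'$ whose boundary is now a union of closed geodesics with total length comparable to $l(\partial R)$.

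\textbf{Step 2: Build a collar and push the boundary onto a non-separating curve system of $S$.} Fix once and for all a non-separating family $A$ of disjoint simple closed geodesics in $S$ (for instance one curve, or a system so that $S \setminus A$ is connected); let $\ell_0 = l(A)$, a constant depending only on $S$. Each boundary geodesic $c$ of $R'$ projects to a closed geodesic in $S$; using LERF-type flexibility is overkill here, so instead I would build an explicit collar. Attach to each boundary component of $R'$ a hyperbolic ``pair-of-pants''-like piece, or more simply a cover of a standard neighborhood, interpolating between $c$ and a curve covering $A$; the point is that a closed geodesic $c$ of length $\ell$ in $S$ cobounds in some cover of $S$ a subsurface with the rest of its boundary covering $A$ and with area $O(\ell + \ell_0)$. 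Summing over all boundary components of $R'$, whose number is $O(l(\partial R))$ since each has length bounded below by the systole of $S$, this produces a surface $R''$ containing $R'$, with $\partial R''$ a covering of $A$, and $\mathrm{Area}(R'' \setminus R') \le b_S'' \, l(\partial R)$. I would also arrange, by doubling the collar if necessary, that $\partial R''$ decomposes as $B^+ \cup B^-$ with matching orientation-preserving and orientation-reversing total degrees over each component of $A$, as required by Corollary \ref{cor:prescribed boundary} --- this is the only delicate bookkeeping point and is easily fixed by taking a disjoint trivial collar with the opposite orientation.

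\textbf{Step 3: Apply Corollary \ref{cor:prescribed boundary} to cap off.} The covering map $\partial R'' \to A$ now satisfies the hypotheses of Corollary \ref{cor:prescribed boundary}, so there is a cover $r : C \to S$ and a connected subsurface of $C$ with boundary $\cong \partial R''$ into which $R''$ isometrically embeds; the cover $C$ can be taken of degree equal to the maximal total degree of $\partial R'' \to A$, which is $O(l(\partial R) / \ell_0 + 1)$. Set $Q = C$. Then $Q \setminus R$ consists of $Q \setminus R''$ together with $R'' \setminus R$. The second part has area $\le b_S''' \, l(\partial R)$ by Steps 1--2. For the first part, $Q \setminus R''$ is a cover of $\bar S$ (the completion of $S \setminus A$) of controlled degree minus the piece we filled; its area is at most $\deg(C) \cdot \mathrm{Area}(S) = O(l(\partial R))$. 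Collecting the estimates gives $\mathrm{Area}(Q \setminus R) \le b_S \, l(\partial R)$ for a constant $b_S$ depending only on $S$, as desired.

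\textbf{Main obstacle.} The genuine difficulty is keeping \emph{every} area bound linear in $l(\partial R)$ and not in $\mathrm{Area}(R)$ --- in particular, the degree of the final cover $C$ furnished by Corollary \ref{cor:prescribed boundary} is controlled only by the boundary degree, which is exactly why the construction goes through; one must check that the ``filler'' $Q \setminus R''$, an arbitrary cover of $\bar S$ of that degree, does not blow up the area beyond $O(\deg \cdot \mathrm{Area}(\bar S)) = O(l(\partial R))$. The secondary obstacle is the orientation/parity bookkeeping needed to legitimately invoke Corollary \ref{cor:prescribed boundary}, handled by the doubling trick in Step 2.
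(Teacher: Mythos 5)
Your high-level plan (reduce the boundary to a cover of a fixed non-separating curve system $A$ and then cap off with Corollary \ref{cor:prescribed boundary}) is indeed the skeleton of the paper's argument, but Step~1 is geometrically wrong and Step~2 hides the real difficulty. In Step~1 you enlarge $R$ by attaching the annulus between a boundary component $c$ and its geodesic representative $c^*$. But since $\partial R$ is locally convex, $R$ itself is locally convex, and lifting to the universal cover one sees that $\widetilde{R}$ is a convex subset of $\mathbb{H}^2$ invariant under each boundary element; the axis of that element (the lift of $c^*$) is the minimal invariant convex set, hence lies \emph{inside} $\widetilde{R}$. So $c^*$ lies inside $R$, not outside --- you cannot enlarge $R$ this way, and in the degenerate cases ($R$ a disc, or $c$ null-homotopic in the ambient cover) there is no closed geodesic representative at all. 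You have the sign of the geodesic curvature backwards.

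Even setting Step~1 aside, Step~2 is the crux and it is waved through. ``Interpolating'' between an arbitrary closed geodesic in a cover of $S$ and a curve covering the fixed system $A$, with area cost $O(l)$, is essentially the quantitative LERF/Patel content that the theorem is about; asserting it does not prove it. The paper instead fixes a special fundamental polygon $\mathcal{P}$ (cut along a chain $\lambda_0,\ldots,\lambda_{2g-1}$ of simple closed geodesics), embeds $R$ into a $\mathcal{P}$-tessellated surface $Q_1$ with $\mathrm{Area}(Q_1\setminus R)$ and $l(\partial Q_1)$ both $O(l(\partial R))$ by an adaptation of Patel's argument, and then performs explicit combinatorial edge identifications (reducing the ``even-label'' boundary arcs) to reach a surface $Q_3$ with totally geodesic boundary lying over the odd curves $\lambda_i$; only then does Corollary \ref{cor:prescribed boundary} apply, and the parity/orientation hypotheses of that corollary come out of the bookkeeping for free. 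Your ``doubling trick'' to fix parity is also unexamined: $|\pi_0(B)|$ must be even and $p^{\pm}$ must match total degree over each component of $A$, and a disjoint extra collar changes the number of boundary components and their degrees in ways that need to be tracked, not merely asserted. In short, the two genuinely hard points --- reaching geodesic boundary over a fixed curve system at linear area cost, and verifying the Neumann hypotheses --- are precisely the ones your sketch leaves open.
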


We will deal with the   problem of capping-off a boundary component of $R$ in a controlled way by working in a  certain combinatorial framework. More precisely we will consider surfaces tessellated by isometric copies of a   particularly nice   fundamental domain. The generic   situation as in the above theorem can be easily reduced to this combinatorial framework.

\subsection*{Surfaces with locally convex boundary}
We point out the following consequence of Lemma \ref{lem:lifting local isometry} and of the Cartan--Hadamard theorem.
% reproduced here as Lemma \ref{lem:lifting local isometry}.
\begin{lemma}
\label{lem:local isometry and locally convex boundary}
Let $R$ be a surface with boundary. Then the boundary $\partial R$ is locally convex and $R$ is isometrically embedded in some cover of $S$ if and only if $R$ admits a local isometry to $S$.
\end{lemma}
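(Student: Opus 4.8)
The plan is to prove the two implications separately. The implication ``$R \hookrightarrow$ a cover of $S$ isometrically $+$ $\partial R$ locally convex $\Longrightarrow$ $R$ admits a local isometry to $S$'' is elementary: if $\iota : R \hookrightarrow \widehat S$ is the given isometric embedding into a cover $\pi : \widehat S \to S$, then $\iota$ is itself a local isometry, the only delicate point being that near a boundary point $x \in \partial R$ the intrinsic metric of $R$ must agree with the restriction of $d_{\widehat S}$ --- which is exactly what local convexity of $\partial R$ gives, since then a $\widehat S$-geodesic joining two points of $R$ sufficiently close to $x$ stays in $R$. The composition $\pi \circ \iota : R \to S$ is then a local isometry after a routine comparison of radii of good balls; if $R$ is disconnected one argues componentwise, using that a disjoint union of covers of $S$ is a cover of $S$. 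Note that local convexity is genuinely used here: at a reflex boundary point the inclusion $\iota$ fails to be locally distance preserving.

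For the converse, assume $f : R \to S$ is a local isometry and pull back the hyperbolic metric, so that $R$ is locally modelled on $\mathbb{H}$. The first and main step is to show that $\partial R$ is locally convex, and this is the step I expect to require the most care. Suppose $R$ fails to be locally convex at some $x \in \partial R$: either $x$ is a reflex corner (interior angle $> \pi$) or $\partial R$ is smooth at $x$ with geodesic curvature pointing out of $R$. In both cases one checks, for $a, b \in \partial R$ close enough to $x$ on the two sides of $x$, that $d_R(a,b)$ is realised by the subarc $\gamma$ of $\partial R$ through $x$: any path in $R$ from $a$ to $b$ must round the point $x$. The arc $\gamma$ is a local geodesic of $R$ but is not a smooth geodesic --- it has a genuine corner at $x$ in the reflex case (where the image neighbourhood develops into a sector of angle $>\pi$ inside $S$) and nonzero geodesic curvature in the smooth case. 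Choosing $a,b$ inside a ball on which $f$ is distance preserving, the image $f\circ\gamma$ has length $d_R(a,b) = d_S(f(a),f(b))$ and hence is a minimising, therefore smooth, geodesic of $S$; but $f$ is a local isometry and transports the corner (resp. the curvature) of $\gamma$ to $f\circ\gamma$, a contradiction. Hence $\partial R$ is locally convex.

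Once $\partial R$ is known to be locally convex, every point of $R$ has a neighbourhood isometric to a ball in $\mathbb{H}$ or to a convex subset of $\mathbb{H}$, so $R$ is a complete (it is compact) locally $\mathrm{CAT}(-1)$, in particular non-positively curved, metric space. Applying Lemma \ref{lem:lifting local isometry} with $N_1 = R$, $N_2 = S$ (componentwise if $R$ is disconnected) shows that $f_* : \pi_1(R) \to \pi_1(S)$ is injective and that any lift $F : \widetilde R \to \widetilde S = \mathbb{H}$ of $f$ is an isometric embedding. Such a lift is $\pi_1(R)$-equivariant through $f_*$. Let $Q = \mathbb{H} / f_*(\pi_1(R))$; since $f_*(\pi_1(R))$ is a torsion-free subgroup of the surface group $\pi_1(S)$ it acts freely and properly discontinuously on $\mathbb{H}$, so $Q$ is a cover of $S$. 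Equivariance lets $F$ descend to a map $R = \widetilde R / \pi_1(R) \to Q$, which is injective because $F$ is injective and equivariant, and which is a local isometry because $F$ is; being an injective local isometry of a compact surface it is an isometric embedding of $R$ into the cover $Q$, completing the proof.
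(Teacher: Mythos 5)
Your proof is correct and takes essentially the same route as the paper: one direction is the observation that the inclusion followed by the covering map is a local isometry (local convexity ensuring the inclusion is locally distance preserving at the boundary), and the other direction checks that a local isometry forces locally convex boundary and then invokes Lemma \ref{lem:lifting local isometry} to lift $f$ to an isometric embedding $F \colon \widetilde{R} \hookrightarrow \mathbb{H}$, which descends to an isometric embedding of $R$ into the cover of $S$ corresponding to $f_*\pi_1(R)$. The only difference is that you spell out why a local isometry implies locally convex boundary (by comparing the boundary arc through a non-convex point with the minimizing, hence smooth, $S$-geodesic joining its endpoints), a step the paper states without proof.
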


Here we use the term \emph{local isometry} in the   metric space   rather than   Riemannian geometry sense.

\begin{proof}[Proof of Lemma \ref{lem:local isometry and locally convex boundary}]
If $R$ is isometrically embedded in some cover $ p : C \to S$ and its boundary $\partial R$ is locally convex then the restriction of $p$ to $R$ is a local isometry. 

Conversely assume that $R$ admits a local isometry $f$ into $S$. This implies that the boundary $\partial R$ is locally convex. The map $f_* : \pi_1(R) \to \pi_1(C)$ is injective and $f$ lifts to an isometric embedding $F : \widetilde{R} \to \widetilde{S} \cong \mathbb{H}$ of the universal covers according to Lemma \ref{lem:lifting local isometry}. The map $F$ descends to an isometric embedding of $R$ into the cover of $S$ that corresponds to the subgroup $f_* \pi_1(R) \le \pi_1(S)$.
\end{proof}

\subsection*{The family of closed geodesic curves $\lambda_0,\ldots,\lambda_{2g-1}$}

\begin{lemma}
\label{lem:construction of a RA fundamental domain}
%\marginpar{Yair, do you know how to prove this proposition using an hyper-elliptic involution?}
The surface $S$ admits a family $\lambda_0,\ldots,\lambda_{2g-1}$  of non-separating simple  closed geodesic curves such that 
\begin{enumerate}
\item the   geodesics curves $\lambda_i$ and $\lambda_j$ with $i \neq j$ are disjoint unless $|i-j| = 1$  in which case they intersect at a single point, and
\item the complement $S \setminus (\lambda_0 \cup \cdots \cup \lambda_{2g-1})$ is a topological disc.
\end{enumerate}
\end{lemma}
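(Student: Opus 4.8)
The plan is to construct the curves one at a time, building up a handle decomposition of $S$ that is ``swept out'' by a single sequence of intersecting geodesics, and then straighten everything to geodesic representatives at the end.

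\textbf{Topological model.} First I would work purely topologically. Realize $S$ as a $4g$-gon with the standard identification pattern, but it is cleaner to use a chain of handles. Concretely, I would produce simple closed curves $c_0, c_1, \ldots, c_{2g-1}$ on $S$ with the following combinatorial intersection pattern: $c_i$ and $c_{i+1}$ meet transversally in exactly one point for $0 \le i \le 2g-2$, and all other pairs $c_i, c_j$ with $|i-j| \ge 2$ are disjoint. A standard way to see this exists: take a genus-$g$ surface built as a linear chain of $g$ tori; on the $k$-th torus put a meridian $\mu_k$ and a longitude $\ell_k$ meeting once, and route the curves across the connecting annuli so that $\ell_k$ meets $\mu_{k+1}$ once while $\mu_k$ misses $\mu_{k+1}$ and $\ell_k$ misses $\ell_{k+1}$. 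Relabelling $(\mu_1,\ell_1,\mu_2,\ell_2,\ldots)$ as $(c_0,c_1,c_2,c_3,\ldots)$ gives the chain pattern. Each $c_i$ is non-separating because it has non-zero algebraic intersection with a neighboring curve (for the $\mu$'s with the adjacent $\ell$, and vice versa). An alternative, perhaps cleaner, construction: start from the ``chain of $2g$ curves'' $c_0,\ldots,c_{2g-1}$ that is well known to fill a genus-$g$ surface (the curves dual to a linear $A_{2g}$-type Dynkin chain, sometimes attributed to Humphries/Lickorish), whose complement is known to be a single disc when one takes exactly $2g$ of them on a genus $g$ surface.

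\textbf{Verifying the complement is a disc.} This is the step I expect to be the main obstacle and the one deserving the most care. I would verify it by an Euler characteristic / cell-count argument: the union $c_0 \cup \cdots \cup c_{2g-1}$, with its $2g-1$ intersection points as vertices, is a graph $\Theta \subset S$ with $V = 2g-1$ vertices and, since each $c_i$ contributes as many edges as it has intersection points with the others (namely $2$ for an interior $c_i$ and $1$ for $c_0$ and $c_{2g-1}$), $E = 2(2g-1) - (2g-2) \cdot 1 \cdots$ — one must count edges carefully: $c_0$ and $c_{2g-1}$ each carry one vertex hence one edge (a loop), while $c_1,\ldots,c_{2g-2}$ each carry two vertices hence two edges, giving $E = 2 + 2(2g-2) = 4g-2$. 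Then $\chi(S) = 2 - 2g = V - E + F = (2g-1) - (4g-2) + F$, forcing $F = 1$: there is exactly one complementary face. To conclude that this single face is a \emph{disc} (rather than a once-punctured higher-genus piece) I would check that its boundary, read off from $\Theta$, is a single circle, equivalently that the complement is connected and simply connected; connectivity of the complement together with $F=1$ and the Euler count pins it down, since a non-disc face would contribute a different Euler characteristic. I would make the edge-count and the ``boundary of the face is one circle'' claim precise by tracking the $A_{2g}$ chain pattern explicitly in the polygon model, where the disc is literally visible as the interior of the $4g$-gon.

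\textbf{Geodesic straightening.} Finally, pass to geodesics. Each $c_i$ is essential (non-separating, hence non-nullhomotopic), so in the hyperbolic metric it has a unique closed geodesic representative $\lambda_i$. Isotoping $c_i$ to $\lambda_i$ can only decrease geometric intersection numbers, but since the $c_i$ already realize the \emph{minimal} intersection pattern (adjacent curves meet once, which is minimal for a pair with algebraic intersection $\pm 1$, and non-adjacent curves are disjoint, which is certainly minimal), the geometric intersection numbers are preserved: $\lambda_i \cap \lambda_{i+1}$ is a single point and $\lambda_i \cap \lambda_j = \emptyset$ for $|i-j|\ge 2$. Each $\lambda_i$ is simple because a geodesic in a minimal position isotopy class of an embedded curve is embedded. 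The complement $S \setminus \bigcup \lambda_i$ is homeomorphic to $S \setminus \bigcup c_i$ (a simultaneous isotopy of a system of curves in minimal position is supported near the curves and carries complementary regions to complementary regions), hence is a single disc. This establishes both conclusions of the lemma.
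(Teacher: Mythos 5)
Your proof is correct and follows essentially the same route as the paper: construct the $A_{2g}$-type chain of simple closed curves with the prescribed intersection pattern, verify via an Euler characteristic count that the single complementary region is a disc, and pass to geodesic representatives using minimal position. If anything, you are slightly more careful than the paper about the final step (that straightening to geodesics preserves the homeomorphism type of the complement via the change-of-coordinates principle), while the paper simply performs the cell count directly on the geodesic system and asserts a single face from the figure; the two Euler counts differ cosmetically (yours on $S$ with the graph's own vertices, the paper's on the polygonal completion) but are equivalent.
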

\begin{proof}
Consider a family $\gamma_0,\ldots,\gamma_{2g-1}$ of non-separating simple closed curves positioned on the surface $S$ as described in Figure \ref{fig:identification on P} below. 

\begin{figure}[ht!]
\begin{center}
	\def\svgwidth{0.55 \textwidth}
\begingroup%
\makeatletter%
\providecommand\color[2][]{%
	\errmessage{(Inkscape) Color is used for the text in Inkscape, but the package 'color.sty' is not loaded}%
	\renewcommand\color[2][]{}%
}%
\providecommand\transparent[1]{%
	\errmessage{(Inkscape) Transparency is used (non-zero) for the text in Inkscape, but the package 'transparent.sty' is not loaded}%
	\renewcommand\transparent[1]{}%
}%
\providecommand\rotatebox[2]{#2}%
\ifx\svgwidth\undefined%
\setlength{\unitlength}{166.19392695bp}%
\ifx\svgscale\undefined%
\relax%
\else%
\setlength{\unitlength}{\unitlength * \real{\svgscale}}%
\fi%
\else%
\setlength{\unitlength}{\svgwidth}%
\fi%
\global\let\svgwidth\undefined%
\global\let\svgscale\undefined%
\makeatother%
\begin{picture}(1,0.38286455)%
\put(0,0){\includegraphics[width=\unitlength,page=1]{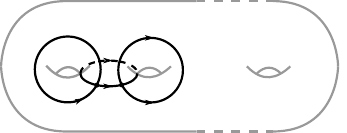}}%
\put(0.18022734,0.04809204){\color[rgb]{0,0,0}\makebox(0,0)[lb]{\smash{$e_0$}}}%
\put(0.30199753,0.10352897){\color[rgb]{0,0,0}\makebox(0,0)[lb]{\smash{$e_1$}}}%
\put(0.29724717,0.22893977){\color[rgb]{0,0,0}\makebox(0,0)[lb]{\smash{$e'_1$}}}%
\put(0.42630392,0.05148698){\color[rgb]{0,0,0}\makebox(0,0)[lb]{\smash{$e_2$}}}%
\put(0.42348442,0.29934255){\color[rgb]{0,0,0}\makebox(0,0)[lb]{\smash{$e'_2$}}}%
\put(0.54975954,0.10195753){\color[rgb]{0,0,0}\makebox(0,0)[lb]{\smash{$e_3$}}}%
\put(0.54400003,0.23311931){\color[rgb]{0,0,0}\makebox(0,0)[lb]{\smash{$e'_3$}}}%
\put(0.86641022,0.10179435){\color[rgb]{0,0,0}\makebox(0,0)[lb]{\smash{$e_{2g-1}$}}}%
\put(0,0){\includegraphics[width=\unitlength,page=2]{RA_surface.pdf}}%
\end{picture}%
\endgroup%
  \caption{The simple closed curves $\gamma_0,\ldots,\gamma_{2g-1}$ are given by  $\gamma_0 = e_0$, $\gamma_{2g-1} = e_{2g-1}$ and $\gamma_i = e_i \bar{e} '_i$ for every other $i$.}
  %The  surface $S$ is obtained by  edge identification of the right-angled polygon $\mathcal{P}$. Any pair of intersecting simple closed curves meet at a right angle.}
  \label{fig:identification on P}
\end{center}
\end{figure}
Let $\lambda_i$ be the geodesic representative of the simple closed curve $\gamma_i$ for every $i \in \{0,\ldots,2g-1\}$. Since geodesic curves are in minimal position we have that
$$| \lambda_i \cap \lambda_j | = \begin{cases} 
1, & \text{if $ |i-j| = 1$} \\
0, & \text{if $ |i-j| > 1$} \\
\end{cases} $$
as required for Statement (1) of the Lemma.  The completion $\mathcal{P}$ of the complement $S \setminus (\lambda_0 \cup \cdots \cup \lambda_{2g-1})$ is a connected surface with boundary admitting $4(2g-1)$ vertices, $2 + 2 + 4(2g-2)=8g-4$ edges  and a single face. The Euler characteristic of $\mathcal{P}$ is  equal to
$$ \chi(\mathcal{P}) = 4(2g-1) - (8g-4) + 1 = 1.$$
Therefore $\mathcal{P}$ is a topological disc as required for Statement (2).
\end{proof}

Let $\mathcal{P}$ denote the completion of the complement $S \setminus (\lambda_0 \cup \cdots \cup \lambda_{2g-1})$. Therefore  $\mathcal{P}$ is compact convex hyperbolic $(8g-4)$-sided polygon. Moreover $\mathcal{P}$ is isometric to a fundamental domain for the action of the fundamental group $\pi_1(S)$ on the hyperbolic plane. We introduce the following notations for the edges of $\mathcal{P}$. See Figure \ref{fig:identification on P}.
\begin{itemize}
\item  $e_0$ and $\bar{e}_0$ are the two geodesic sides of $\mathcal{P}$ that correspond to the curve $\lambda_0$,
\item $e_{2g-1}$ and $\bar{e}_{2g-1}$ are the two geodesic sides of $\mathcal{P}$ that correspond to the curve $\lambda_{2g-1}$, and
\item $e_i,e'_i,\bar{e}_i$ and $\bar{e}'_i$ are  the four sides of $\mathcal{P}$ that correspond to the curve $\lambda_i$  for every other $i \in \{1,\ldots, 2g-2\}$.
\end{itemize}
%In particular $\mathcal{P}$ has $2 +2 + 4(2g-2) = 8g-4$ geodesic sides in total. 
The surface $S$ can be recovered by identifying every geodesic side $e$ of the fundamental polygon $\mathcal{P}$ with the  geodesic side of opposite orientation   denoted $\bar{e}$.

\medskip

\emph{We will keep the above notations   throughout the remainder of Section \ref{sec:capping off}.}

%For the remainder of this work the notation $\mathcal{P}$ will always be used to refer to the above   particular convex fundamental polygon. Moreover we will keep the notations $\lambda_i$ for the system of simple closed curves  constructed in Lemma \ref{lem:construction of a RA fundamental domain} as well as the notations $e_i, e'_i, \bar{e}_i,\ldots$ for the geodesic sides of $\mathcal{P}$.  }

\subsection*{Tessellations and $\mathcal{P}$-surfaces}

Let $\mathcal{T}(C,\mathcal{P})$ denote the induced tessellation of a given cover $C$ of the surface $S$ by isometric copies of the fundamental polygon $\mathcal{P}$.

\editA{
The link of every vertex of the tessellation $\mathcal{T}(C,\mathcal{P})$ is a cycle graph of size four. In other words,  locally, exactly four polygons of $\mathcal{T}(C,\mathcal{P})$ meet at every vertex.} Another crucial property of the tessellation $\mathcal{T}(C,\mathcal{P})$ is that the concatenation of any pair of edges incident at a given vertex $v$ and \emph{not} consecutive in the cyclic ordering determined by the  link of $v$ is a local geodesic.   Such a concatenation of two edges is, up to edge orientation, of the form $e_0 e_0, e_{2g-1} e_{2g-1}, e_i e'_i$ or $e'_i e_i$ for some $i \in \{1,\ldots,2g-2\}$.

\begin{definition*}
 A \emph{$\mathcal{P}$-surface} $R$ is a subsurface of some  cover $C$ of the surface $S$   tiled by  polygons from the tessellation $\mathcal{T}(C,\mathcal{P})$. The induced tessellation of $R$ will be denoted $\mathcal{T}(R,\mathcal{P})$.
\end{definition*}

 %For a connected surface $R$ with a locally convex boundary the two conditions of  being embedded in a cover of $S$ and admitting an injective map $\pi_1(R) \to \pi_1(S)$ are equivalent. This follows from    Lemma \ref{lem:lifting local isometry}.
 
We remark that a $\mathcal{P}$-surface could in general be disconnected.

%tFrom now on we specialize the discussion to the case that $S$ is a closed surface of genus two. We construct a particularly nice right-angled fundamental domain for $\pi_1(S)$.

%The above polygon $\mathcal{P}$ is the one to be used in the proof of Theorem \ref{thm:Gamma-surface embedded in surface}.

%We find it useful to introduce the following notation
%$$\lambda'_1 = \lambda_0, \lambda'_2 = \rho'^{-1}_1 \rho_1, \ldots, \lambda'_{2g-1} = \rho'^{-1}_{2g-2} \rho_{2g-2}. $$

%While we use  $\mathcal{P}$ to denote the particular right-angled fundamental domain constructed in Lemma \ref{lem:construction of a RA fundamental domain}, the following three propositions hold true more generally for any other right-angled fundamental domain.

\begin{prop}
\label{prop:sides match up}
Let $R$ be a $\mathcal{P}$-surface. Then every geodesic arc $ e $ of $\partial \mathcal{P}$ appears along the boundary $\partial R$ the same number of times with each orientation $e$ and $\overline{e}$.
\end{prop}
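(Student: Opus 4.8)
The plan is to exploit the fact that each curve $\lambda_i$ is a \emph{two-sided} (in fact non-separating) simple closed geodesic in $S$ and that the tessellation $\mathcal{T}(C,\mathcal{P})$ of the ambient cover $C$ projects down to $S$. Fix a geodesic side $e$ of $\mathcal{P}$, say lying over the curve $\lambda_i$, and let $\overline{e}$ be the side glued to it when reconstructing $S$. The boundary $\partial R$ is a $1$-manifold consisting of geodesic sides of polygons of $\mathcal{T}(R,\mathcal{P})$; a side lying over $\lambda_i$ appears on $\partial R$ precisely when exactly one of the two polygons of $\mathcal{T}(C,\mathcal{P})$ adjacent along that side belongs to $R$. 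So the quantity to control is a count of \textquotedblleft one-sided\textquotedblright{} edges over $\lambda_i$, separated according to whether the copy in $R$ presents the side as $e$ or as $\overline{e}$.

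First I would set up bookkeeping on the full preimage of $\lambda_i$ in the cover $C$. Write $p : C \to S$ for the covering map and let $\widehat{\lambda}_i = p^{-1}(\lambda_i)$, a disjoint union of simple closed geodesics in $C$, each of which is cut by the tessellation $\mathcal{T}(C,\mathcal{P})$ into finitely many edges, alternately presenting the label $e$ and $\overline{e}$ as one traverses the circle (this alternation is exactly the statement that the two polygons glued along a copy of $e$ inside $S$ are distinct, i.e. $\lambda_i$ has a well-defined pair of sides locally, together with the local picture of $\mathcal{T}(C,\mathcal{P})$ recorded before the proposition: four polygons meet at each vertex and opposite edges concatenate to a local geodesic). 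Consequently, along any single component $c$ of $\widehat{\lambda}_i$, the edges labelled $e$ and the edges labelled $\overline{e}$ occur in strict alternation, so there are equally many of each on $c$.

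Next I would restrict attention to how $R$ meets a fixed component $c$ of $\widehat{\lambda}_i$. Going around $c$, each edge is adjacent to one polygon on the $e$-side and one on the $\overline{e}$-side; record for each edge whether its $e$-side polygon lies in $R$ and whether its $\overline{e}$-side polygon lies in $R$. An edge contributes to $\partial R$ with label $e$ exactly when its $e$-side polygon is in $R$ and its $\overline{e}$-side polygon is not, and with label $\overline{e}$ in the mirrored situation. Because the edges alternate $e,\overline{e},e,\overline{e},\dots$ around the circle $c$, the $e$-side of an $e$-labelled edge and the $\overline{e}$-side of the next $\overline{e}$-labelled edge are the \emph{same} polygon; chasing this identification around $c$ shows that the sequence of \textquotedblleft is this polygon in $R$?\textquotedblright{} answers is a cyclic word in $\{0,1\}$, and the number of edges presenting label $e$ on $\partial R$ equals the number of $1\to 0$ descents while the number presenting label $\overline{e}$ equals the number of $0\to 1$ ascents. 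On a cyclic word these two counts are equal. Summing over all components $c$ of $\widehat{\lambda}_i$, and then over all sides $e$ of $\mathcal{P}$, gives the proposition.

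The main obstacle I anticipate is getting the combinatorial alternation argument genuinely correct rather than merely plausible: one must be careful that \textquotedblleft the polygon on the $e$-side\textquotedblright{} is unambiguous (which uses that $\lambda_i$ is simple and two-sided, so $C$ is locally a product near $c$ and the two sides of $c$ are globally distinguishable along each component), and that the bijection identifying \textquotedblleft $e$-side of this edge\textquotedblright{} with \textquotedblleft $\overline{e}$-side of the neighbouring edge\textquotedblright{} is exactly the one making the descent/ascent counts on the cyclic word match the $e$/$\overline{e}$ boundary counts. An alternative, perhaps cleaner, route to the same conclusion is a homological one: the function assigning to an oriented $1$-cycle the signed count of $e$-crossings is a cohomology class dual to $\lambda_i$, and $\partial R$ is a boundary in $C$ and hence null-homologous, so its signed $e$-count is zero, which is precisely the assertion that $e$ and $\overline{e}$ appear equally often. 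I would present whichever of these two arguments is shorter to make fully rigorous; the homological version sidesteps the alternation bookkeeping at the cost of invoking Poincaré--Lefschetz duality on the (possibly non-compact) cover $C$, which is why the direct combinatorial argument above is my primary plan.
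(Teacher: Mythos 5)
Your primary combinatorial plan has a genuine gap. The key ``alternation'' claim is false: an edge $\mathbf{f}$ on a component $c$ of $p^{-1}(\lambda_i)$ does not carry a single label $e$ or $\overline{e}$ --- the label belongs to the pair $(\mathbf{f},\text{adjacent polygon})$, and since $c$ is two-sided, all polygons on one fixed side of $c$ present their edge on $c$ with the \emph{same} orientation (say $e$) while all those on the other side present $\overline{e}$. Nothing alternates as you traverse $c$; what changes is the label as you pass from one side of a single edge to the other. The cited justification (``the two polygons glued along a copy of $e$ inside $S$ are distinct'') is also incorrect: $S$ is tiled by one copy of $\mathcal{P}$, so both sides of $\lambda_i$ abut the same polygon $\mathcal{P}$. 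Most importantly, the per-component balance you aim to prove can actually fail: if $C$ is a cover in which some polygon $\mathbf{P}$ of $\mathcal{T}(C,\mathcal{P})$ has its $e_0$-side and its $\overline{e}_0$-side on two \emph{different} components of $p^{-1}(\lambda_0)$, then $R=\mathbf{P}$ is a $\mathcal{P}$-surface whose boundary contributes a single $e_0$ to one of those components and a single $\overline{e}_0$ to the other. The balance asserted in the proposition is a global, not a per-component, phenomenon, so the descent/ascent bookkeeping on one circle $c$ cannot prove it.

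Your homological backup can be made rigorous, but once fixed it is essentially the paper's argument in cochain notation. The phrasing needs adjusting: $\partial R$ does not cross $p^{-1}(\lambda_i)$ transversely, it runs along it, so ``signed count of $e$-crossings'' and ``class dual to $\lambda_i$'' are not the right objects. What you want is the cellular $1$-cochain $\phi_e$ which takes the value $+1$ on an oriented edge lying over $e$ with matching orientation, $-1$ with the opposite orientation, and $0$ otherwise. It is a cocycle because $\partial\mathcal{P}$ contains $e$ once with each orientation, so $\phi_e(\partial\mathbf{P})=1-1=0$ for every polygon $\mathbf{P}$; then $\phi_e(\partial R)=\phi_e(\partial[R])=(\delta\phi_e)([R])=0$, which is exactly the desired balance. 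The cocycle condition is precisely the observation that drives the paper's proof, which states the same cancellation directly as a double count over the polygons of $\mathcal{T}(R,\mathcal{P})$: every polygon contributes $e$ once with each orientation, interior edges are counted once with each orientation and hence cancel in $e/\overline{e}$ pairs, and what remains on the boundary must be balanced.
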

\begin{proof}
Every geodesic arc $e$ appears along the boundary of the polygon $\mathcal{P}$ once with each orientation. Moreover in the tessellation of $R$ by isometric copies of $\mathcal{P}$ every interior edge is accounted for once with each orientation. The result follows.
\end{proof}

Given a  $\mathcal{P}$-surface $R$ it follows from the properties of the tessellation $\mathcal{T}(R,\mathcal{P})$ discussed above   that $\partial R$ is locally convex if and only the link of any vertex $v$ of   $\mathcal{T}(R,\mathcal{P})$ that lies on the boundary $\partial R$ is \editA{a path graph of length at most two. In other words, locally, at most two polygons meet at a vertex in the boundary,  forming together an angle of at most $\pi$.} This observation motivates the following.

\begin{prop}
	\label{prop:identifying same edges}
	Let $R$ be a $\mathcal{P}$-surface with locally convex boundary. Let $\alpha$ and $\beta$ be   two  totally geodesic closed curves/maximal geodesic arcs    along the boundary $\partial R$   that map to the same geodesic curve/arc  on $S$ with opposite orientations. Then the surface $R'$ obtained by identifying $\alpha$ and $\beta$ is  a $\mathcal{P}$-surface with locally convex boundary.
\end{prop}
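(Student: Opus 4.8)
The plan is to verify the definition of a $\mathcal{P}$-surface with locally convex boundary directly for the quotient $R'$, using the combinatorial characterization established just before the statement: a $\mathcal{P}$-surface has locally convex boundary if and only if the link of every boundary vertex of the induced tessellation has size at most two. First I would observe that, since $\alpha$ and $\beta$ map to the same geodesic curve/arc of $S$ with opposite orientations, the identification respects the tessellation $\mathcal{T}(R,\mathcal{P})$: the polygons of $\mathcal{T}(R,\mathcal{P})$ abutting $\alpha$ and those abutting $\beta$ project to polygons of $\mathcal{T}(S,\mathcal{P})$ meeting along the image edge from the two sides, so gluing produces a surface still tiled by isometric copies of $\mathcal{P}$ with a well-defined local isometry to $S$. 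By Lemma \ref{lem:local isometry and locally convex boundary} (in the form: a local isometry to $S$ exists iff the surface embeds isometrically in a cover and has locally convex boundary) this will already give that $R'$ embeds in a cover and is a $\mathcal{P}$-surface; the remaining work is to check that the boundary of $R'$ is still locally convex, i.e., that the link sizes stay $\le 2$.

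The key point is the link computation at the vertices created or modified by the gluing. A vertex in the interior of $\alpha$ (identified with a vertex in the interior of $\beta$) has, before gluing, link of size $\le 2$ on each side — in fact exactly the two tiles of $R$ along that arc — and after gluing its link has size at most $2+2=4$, which is too big in general. Here is where the hypothesis that $\alpha$ and $\beta$ lie along $\partial R$ with \emph{locally convex} boundary and map to the \emph{same} geodesic with \emph{opposite} orientation does the work: local convexity of $\partial R$ forces the link of each such vertex, on each of $\alpha$ and $\beta$, to have size exactly $1$ (a single tile), since a boundary vertex lying in the \emph{interior} of a geodesic boundary arc of a $\mathcal{P}$-surface has link of size exactly one — two tiles at such a vertex would force the boundary edges there to be consecutive in the link, contradicting that the arc is geodesic, i.e. that the two boundary edges are non-consecutive. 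Hence after gluing the link has size $1+1 = 2$, which is admissible. At an endpoint shared by $\alpha$ and some adjacent boundary arc, a short case analysis using the opposite-orientation matching of $e$ and $\bar e$ (Proposition \ref{prop:sides match up}) and the fact that concatenations $e_0e_0$, $e_{2g-1}e_{2g-1}$, $e_ie'_i$, $e'_ie_i$ are the local geodesics of $\mathcal{T}(C,\mathcal{P})$ shows the link again does not exceed size two; if an endpoint of $\alpha$ gets identified with an endpoint of $\beta$ one similarly checks the resulting vertex link.

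Finally I would assemble these observations: $R'$ is tiled by copies of $\mathcal{P}$, so it is a $\mathcal{P}$-surface inside the cover of $S$ corresponding to $f_*\pi_1(R') \le \pi_1(S)$ provided by Lemma \ref{lem:local isometry and locally convex boundary}, and every boundary vertex of $\mathcal{T}(R',\mathcal{P})$ has link of size at most two, so $\partial R'$ is locally convex. The main obstacle I anticipate is purely bookkeeping: enumerating the vertex types at the endpoints of $\alpha$ and $\beta$ (interior-of-arc versus endpoint-of-arc versus identified endpoints, and whether $\alpha,\beta$ are closed curves or arcs), and in each case confirming that non-consecutiveness of the two boundary edges in the link — equivalently geodesicity of the boundary arc — is preserved after gluing, so that local convexity in the sense of the $\le 2$ link criterion continues to hold. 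No delicate metric estimate is needed; everything reduces to the combinatorics of links in $\mathcal{T}(C,\mathcal{P})$ together with Propositions \ref{prop:sides match up} and the characterization preceding the statement.
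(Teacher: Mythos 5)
Your overall strategy — reduce local convexity to the combinatorial condition that every boundary vertex of $\mathcal{T}(R',\mathcal{P})$ has link of size at most two, then invoke Lemma~\ref{lem:local isometry and locally convex boundary} to place $R'$ in a cover of $S$ — is exactly the paper's route. But your link computation at the vertices interior to the arcs $\alpha$, $\beta$ is backwards, and it hides the observation that actually makes the argument work.

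You assert that a boundary vertex $v$ lying in the interior of a geodesic boundary arc of a $\mathcal{P}$-surface must have link of size one in $R$, on the grounds that ``two tiles at such a vertex would force the boundary edges there to be consecutive in the link.'' This is the opposite of what happens. In the full tessellation of a cover the link of a vertex has size four, and the two boundary edges of $\partial R$ at $v$ form a local geodesic precisely when they are \emph{non-consecutive}, i.e.\ opposite, in that cyclic order. For the two boundary edges to be opposite with $R$ filling one side, the link of $v$ in $R$ must contain the two polygons lying between them: size exactly two, not one. (Link size one would make the two boundary edges consecutive, producing a corner, which is what happens at the \emph{endpoints} of a maximal arc — and that is exactly the fact the paper's proof uses.) You actually say this correctly in your first sentence (``in fact exactly the two tiles of $R$ along that arc'') before contradicting yourself a line later.

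The consequence you then worry about — that after gluing the link would have size $2+2=4$, ``too big in general'' — is not a problem at all, and recognizing why is the missing idea. Size four is the \emph{full} link size; a vertex with link size four after gluing is an \emph{interior} vertex of $R'$, so the locally-convex-boundary criterion does not apply to it. There is nothing to check at vertices interior to $\alpha$ and $\beta$. The only vertices that can remain on $\partial R'$ and whose links change are the endpoints of the arcs. These have link size one in $R$ (they are corners, as above), and after identification the link has size at most two, which keeps the boundary locally convex. That is the content of the paper's short argument. Your proposal reaches the right headline claim but via an incorrect intermediate statement, and it misses the actual reason the interior-of-arc vertices are harmless, so as written it does not constitute a correct proof.
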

\begin{proof}
The fact that $R'$ has a locally   convex boundary is clear in the case that $\alpha$ and $\beta$ are totally geodesic boundary curves.

Consider the case  where  $\alpha$ and $\beta$ are maximal geodesic arcs.  The vertices representing the end-points of the arcs $\alpha$ and $\beta$ in the tessellation $\mathcal{T}(R,\mathcal{P})$ have links of size one in $R$.  Therefore the boundary components of $R'$ that have been modified by the identification of $\alpha$ and $\beta$ have links of size at most two in $R$ and so remain locally convex as required.

The $\mathcal{P}$-surface structure of $R$ gives rise to   a local isometry $f : R \to S$. It is compatible with the identification of $\alpha$ and $\beta$ and descends to a local isometry $f' : R' \to S$.  It follows from Lemma \ref{lem:local isometry and locally convex boundary} that  $R'$ isometrically embeds into some cover  of $S$.  It is clear that $R'$ is tiled by polygons from the tessellation $\mathcal{T}(C,\mathcal{P})$. Therefore $R'$ is  a $\mathcal{P}$-surface.
\end{proof}

The notion of  $\mathcal{P}$-surfaces is useful for our purposes since a surface with   boundary as in Theorem \ref{thm:variant of LERF} can always be embedded inside a $\mathcal{P}$-surface in an efficient way.

%\subsection*{Proof of Theorem \ref{thm:variant of LERF}}
%\subsection*{Capping off surfaces with locally convex boundary}

\begin{prop}
\label{prop:cut surface is deformation retract of R surface}
Let $R$ be a  surface with boundary which is isometrically embedded in some cover of $S$. If the boundary   $\partial R$ is  locally convex then  $R$ can be isometrically embedded in a  $\mathcal{P}$-surface $Q$ with locally convex boundary making Diagram (\ref{eq:commute R}) commute and such that
%Let $C$ be a surface with boundary embedded in some cover of $S$. If the boundary $\partial C$ is  locally convex then $C$ can be embedded in a $\mathfrak{G}$-bounded cover $R$   such that
$$\mathrm{Area}(Q \setminus R) \le  d_{S}  l(\partial R) \quad \text{and} \quad l(\partial Q) \le d_{S}  l(\partial R)$$
where $d = d_{S}  > 0$ is a constant depending only on the surface $S$. 

Moreover we may assume, if desired, that any geodesic subarc of $\partial Q$ has at most two edges and that no boundary component of $\partial Q$ is totally geodesic.
\end{prop}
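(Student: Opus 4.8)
The plan is to reduce the generic situation to the combinatorial world of $\mathcal{P}$-surfaces by a "thickening" argument: enlarge $R$ slightly, inside the cover where it lives, so that it becomes a union of polygons from the tessellation $\mathcal{T}(C,\mathcal{P})$, while controlling how much area is added in terms of $l(\partial R)$.

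First I would recall via Lemma \ref{lem:local isometry and locally convex boundary} that $R$ admits a local isometry $f : R \to S$, so I may lift to the universal cover and work with the tessellation $\mathcal{T}(\widetilde{S},\mathcal{P}) = \mathcal{T}(\HH,\mathcal{P})$ of the hyperbolic plane. The image $f(R)$ (or rather a lift $\widetilde{R} \subset \HH$) is a locally convex region, and the idea is to take $Q$ to be the union of all closed polygons of $\mathcal{T}(\widetilde{S},\mathcal{P})$ that meet $R$. Concretely, pull the tessellation back to $R$; the polygons of $\mathcal{T}(C,\mathcal{P})$ that are only partially covered by $R$ are exactly those meeting $\partial R$, and I enlarge $R$ by adjoining each such polygon in full (or rather the appropriate lifted/developed copies, since $R$ need not be a subcomplex of any finite cover a priori — but $C$ is a cover of $S$, so $\mathcal{T}(C,\mathcal{P})$ is honestly defined on $C$, and $R \subset C$, so this is legitimate). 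Call the result $Q_0$. Since $\mathcal{P}$ is compact there is an upper bound $K_S$ on the number of polygons of the tessellation that can meet a geodesic segment of unit length inside a single copy of $\mathcal{P}$, together with a similar bound near vertices (exactly as in the proof of Proposition \ref{prop:distance between permutations separated by graph}); hence the number of polygons adjoined is at most $\mathrm{const}_S \cdot l(\partial R)$, giving $\mathrm{Area}(Q_0 \setminus R) \le d_S' \, l(\partial R)$ and $l(\partial Q_0) \le d_S' \, l(\partial R)$ for a suitable $d_S' > 0$.

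Next I must arrange local convexity of $\partial Q_0$, i.e. by the remark preceding Proposition \ref{prop:identifying same edges} that every boundary vertex of $\mathcal{T}(Q_0,\mathcal{P})$ has link of size at most two. This is where I expect the main obstacle. After the thickening, boundary vertices with link of size three can still occur, so one repeatedly adjoins the missing fourth polygon at each such vertex; this is a standard "concavity-filling" procedure, but one must check it terminates and that the total extra area remains bounded by $l(\partial R)$. The termination and the area bound should follow because each concavity-filling step strictly decreases the number of boundary vertices of link size three without increasing the boundary length by more than a bounded amount and in fact decreases a suitable complexity; alternatively, and more cleanly, one observes that the fully-filled region is exactly $\{x \in C : B(x,\rho) \cap R \neq \emptyset\}$ for a suitable fixed radius $\rho$ depending only on $\mathcal{P}$ — the closed $\rho$-neighbourhood of $R$ in $C$ realized as a subcomplex — whose boundary is automatically locally convex since $R$ is, and whose added area is again $O(l(\partial R))$ because $R$ has locally convex (hence, in a cover, geodesic-like) boundary of the given length. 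I would phrase the argument in whichever of these two ways makes the bookkeeping shortest, most likely the neighbourhood description together with the quantitative local finiteness of $\mathcal{T}(C,\mathcal{P})$.

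Finally, for the "moreover" clause I invoke Proposition \ref{prop:identifying same edges}: any totally geodesic boundary component, or any maximal geodesic boundary arc consisting of three or more edges, maps to a closed geodesic/arc on $S$, and by Proposition \ref{prop:sides match up} each edge $e$ of $\mathcal{P}$ appears along $\partial Q$ equally often with each orientation; pairing up such occurrences and identifying them via Proposition \ref{prop:identifying same edges} produces a new $\mathcal{P}$-surface, still with locally convex boundary, in which $R$ still embeds (identifications take place away from $R$, on the newly-adjoined collar), with $\mathrm{Area}$ unchanged and $l(\partial Q)$ only decreasing. Iterating until no geodesic subarc of $\partial Q$ has more than two edges and no boundary component is totally geodesic yields the desired $Q$, and absorbing all the constants into a single $d_S > 0$ completes the proof.
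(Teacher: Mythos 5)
Your overall shape of argument --- thicken $R$ to a subcomplex of the $\mathcal{P}$-tessellation, then enforce local convexity by adjoining polygons, then handle the ``moreover'' clause --- is the same as the paper's. The area/length estimate for the first thickening step is fine and is argued the same way (finitely many polygons per unit of boundary length). But there are genuine gaps at the two places where the argument actually has content.

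First, the step you yourself flag as ``the main obstacle'' is not resolved. The concavity-filling process is \emph{not} obviously monotone: when you add the fourth polygon at a boundary vertex of link size three, the other vertices of that new polygon, which previously had smaller link in your region, now have link increased by one or two, so you may \emph{create} new boundary vertices of link size three. Your claim that the process ``strictly decreases a suitable complexity'' names no complexity and would need real work to substantiate. Your alternative --- that the filled region coincides with a metric $\rho$-neighborhood of $R$ realized as a subcomplex --- does not hold: a metric neighborhood of a region with locally convex boundary need not have locally convex boundary (two boundary components may approach one another), and a metric neighborhood is not a union of tiles. The paper handles this step by invoking Patel's Theorem~3.1 of \cite{patel2014theorem}, which is precisely the (nontrivial) statement that such a convexification by adding tiles exists, together with Patel's area estimate. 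Relatedly, the paper does not work in the ambient cover $C$; it passes first to the cover $R'$ of $S$ corresponding to $\pi_1(R)\le\pi_1(S)$, which deformation retracts onto $R$ so that every component of $R'\setminus R$ is a collar of a boundary component of $\partial R$. This is the natural setting in which Patel's argument applies and in which the thickening cannot ``wrap around'' and re-intersect $R$; working directly in $C$ leaves this unaddressed.

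Second, your treatment of the ``moreover'' clause takes a different route (identifying paired boundary edges via Propositions \ref{prop:sides match up} and \ref{prop:identifying same edges}) and has a gap. Proposition \ref{prop:sides match up} only guarantees that each \emph{edge} of $\mathcal{P}$ occurs along $\partial Q$ equally often with each orientation; it does not guarantee that entire maximal geodesic arcs (or totally geodesic boundary components) can be matched up in pairs of the same length and with inverse labels, which is what Proposition \ref{prop:identifying same edges} requires. It is also not clear that your iteration terminates in a state where all geodesic subarcs have at most two edges. The paper disposes of the ``moreover'' clause far more simply: it adjoins one additional layer of polygons, which automatically breaks totally geodesic boundary components and long geodesic boundary arcs thanks to the structure of the tessellation (every vertex has link of size four, and non-consecutive edges at a vertex concatenate to a local geodesic), at the cost of another $O(l(\partial R))$ of area, absorbed into $d_S$.
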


The following proof is inspired by and relies on ideas  of Patel's work \cite{patel2014theorem}.

\begin{proof}[Proof of Proposition \ref{prop:cut surface is deformation retract of R surface}]
We assume that $R$ has non-empty boundary for otherwise there is nothing to prove.
 Since $R$ is embedded in some cover $C$ of $S$ we may identify $\pi_1(R)$ with a certain infinite index subgroup of $\pi_1(S)$. Let $R'$ be the cover of $S$ corresponding to that subgroup. In particular $R$ is an embedded subsurface of $R'$ and every connected component of $R' \setminus R$ retracts to a boundary component of $\partial R$. Let $Q'$ be the $\mathcal{P}$-surface consisting of all polygons in $\mathcal{T}(R',\mathcal{P})$ that intersect $R$ non-trivially.

The boundary of the $\mathcal{P}$-surface $Q'$ need not yet be locally convex\editA{, as it may have \emph{bad} vertices whose links are paths of length 3. As explained in \cite[Theorem 3.1]{patel2014theorem}, since $R$ is locally convex, there are no consecutive bad vertices in $\partial Q'$. One can now form the locally convex $\mathcal{P}$-surface $Q''$ by attaching polygons along the geodesics subarcs of $\partial Q'$ emanating from bad vertices. For the full details we refer the reader to 
Patel's argument in \cite[Theorem 3.1]{patel2014theorem}.}

\begin{remark*}
We emphasize that, at least formally speaking, there are a few minor differences between our situation and that of  \cite{patel2014theorem}. Namely
\begin{itemize}
	\item  A right-angled pentagon is used in \cite{patel2014theorem} while our polygon $\mathcal{P}$ is $(8g-4)$-sided and is not assumed to be right-angled.\footnote{\editA{In fact $\mathcal{P}$ can be chosen to be right-angled in our case as well. However, making this choice is not essential to the proof.}} 
% 	\editB{I'm not sure this comment helps, since we don't actually do this.}
	\item  The quotient of the hyperbolic plane by the action of a single hyperbolic element is considered  in the context of \cite[Theorem 3.1]{patel2014theorem} while we consider locally convex boundary components of  $\mathcal{P}$-surfaces.
\end{itemize}
Patel's arguments rely only on certain properties of   links in the tessellation $\mathcal{T}(R',P)$ that were mentioned above and hold true in our case as well. Namely, all links have size four, and the concatenation of every two non-consecutive edges is a local geodesic. The same proof goes through.
\end{remark*}

\editA{We conclude that the  $\mathcal{P}$-surface $Q''$ has   locally convex boundary and satisfies $Q ' \subset Q'' \subset R'$.}
Proceeding with the proof of Proposition \ref{prop:cut surface is deformation retract of R surface}, the boundary of the $\mathcal{P}$-surface $Q''$  may contain geodesic arcs longer than two edges or totally geodesic boundary components. If desired, this can be overcome simply by  attaching an additional layer of polygons to $Q''$. Namely, let $Q$ consist of all polygons in $\mathcal{T}(R',\mathcal{P})$ that intersect $Q''$ or its boundary non-trivially. The boundary of $Q$ remains     locally convex   and satisfies the requirements as in the statement. 

%a number of polygons proportional to $l(\partial R')$ along  embedded into a larger $\mathcal{P}$-surface $R$ with locally convex boundary  using extra 

 An area estimate analogous to \cite[Theorem 4.3]{patel2014theorem} shows that $\mathrm{Area}(Q' \setminus R) \le  d'_{S } l(\partial R)$ for some constant $d'_S > 0$. The total boundary length of $Q'$ is bounded above by the number of  polygons meeting $Q' \setminus R$ times the perimeter of the polygon $\mathcal{P}$. In particular   $l(\partial Q') \le d''_{S} l(\partial R)$ for some other constant $d''_{S} > 0$. Repeating this argument twice for the pair of $\mathcal{P}$-surfaces $Q''$ and $Q$ gives the required linear upper bounds on $\mathrm{Area}(Q\setminus R)$ and $l(\partial Q)$ in terms of some positive constant $d_S > 0$.
\end{proof}

\subsection*{Capping off boundary components}

We now have all the required machinery to complete the proof of  Theorem \ref{thm:variant of LERF} of the introduction.

\begin{proof}[Proof of Theorem \ref{thm:variant of LERF}]
Let $R$ be a  surface with locally convex boundary which is embedded in some cover of $S$. Making use of Proposition \ref{prop:cut surface is deformation retract of R surface} it is possible to find a $\mathcal{P}$-surface $Q_1$ containing an embedded copy of $R$   such that $\mathrm{Area}(Q_1 \setminus R)$ as well as $ l(\partial Q_1)$ are bounded above by $d_S l(\partial R)$ 
where $d_S > 0$ is a positive constant. Moreover   any geodesic subarc of $\partial Q_1$ has at most two edges and no component of $\partial Q_1$ is geodesic. 
Recall that $\mathcal{P}$ is the   convex polygonal fundamental domain obtained as the complement of the system of curves $\lambda_0,\ldots,\lambda_{2g-1}$ on the surface $S$.%constructed in Lemma \ref{lem:construction of a RA fundamental domain}.

The boundary components of $Q_1$ are piecewise geodesic and map to the arcs $e_0,\ldots, e_{2g-1}$. Each geodesic boundary  arc of length one   is labeled by either $e_i, e'_i$ or their inverses. Disregarding edge orientations, each   arc of length two   is labeled by either $e_0e_0, e_{2g-1}e_{2g-1}, e_i e'_i,  e'_i e_i$. We say that a geodesic boundary arc has odd label or even label if $i$ is   odd or even, respectively. 

The proof proceeds in three steps.

\begin{enumerate}

\item \emph{Identifying length two geodesic boundary arcs with even labels.}
Consider a geodesic boundary arc $\alpha$ of length two and even label.	We will assume that with the induced orientation  $\alpha$   reads $e_{i}e'_{i}$  with $i$ even. The other cases are treated analogously. 
	
	The midpoint of the length two arc $\alpha$ has an incident edge contained in the interior of $Q_1$ and labeled $e_{i+1}$.  Trace a geodesic arc $\gamma$  on the surface $Q_1$ starting at this edge with labels alternating between $e_{i+1}$ and $e'_{i+1}$. It runs transverse to geodesic arcs of the two forms $e_{i}e'_{i}$ and $e_{i+2}e'_{i+2}$ until it reaches the boundary of $Q_1$ again, necessarily at the midpoint of some length two geodesic arc. 
	
	There are two possibilities to consider. If $\gamma$ reaches the boundary of $Q_1$ at the midpoint of a geodesic arc $\beta$ labeled $e_{i}e'_{i}$ then we may simply identify the two arcs $\alpha$ and $\beta$ to reduce the number of even boundary components of length two.
	
	\editA{	
	The other possibility is that $\gamma$ reaches the midpoint of a geodesic arc $\beta$ labeled $e_{i+2}e'_{i+2}$. Without loss of generality, let us assume that the last edge along $\gamma$ is labeled $e_{i+1}$. If this is the case, let $D$ be the $\mathcal{P}$-surface obtained by gluing two copies of the polygon $\mathcal{P}$ along the edge $e'_{i+1}$ so that $D$ has two boundary geodesic arc labeled $e_i e'_i$ and $e_{i+2} e'_{i+2}$. Attach the boundary labeled $e_{i+2} e'_{i+2}$ in $D$ to the arc $\beta$ in $Q_1$. The resulting $\mathcal{P}$-surface is locally convex by Proposition \ref{prop:identifying same edges}.
	The geodesic $\gamma$ can now be extended by one  edge, so that it   reaches the midpoint of the boundary geodesic arc labeled by $e_{i}e'_{i}$ in $D$. We can now proceed as before.} 	
	
	While this operation might increase the total length of geodesic boundary arcs with odd labels or the total number of edges with even labels, it will reduce the number of geodesic boundary arcs of length two and even label. 
	
	%The operation discussed in the previous paragraph  preserves local convexity as all links on the boundary of the resulting surface are path graphs of length at most $2$. 

	Let $Q_2$ denote the resulting surface. We point out that $Q_2$ is again a $\mathcal{P}$-surface with locally convex boundary by Proposition \ref{prop:identifying same edges}.

	\item \emph{Identifying the remaining geodesic boundary arcs of length one with even labels.}
	 Every geodesic boundary arc of $Q_2$ with an even label has length one. Moreover $Q_2$ has the same number of edges labeled $e_i$ and $\bar{e}_i$ for every even $i$ by Proposition \ref{prop:sides match up}. Choose any bijection between these two sets and identify edges in pairs. Let $Q_3$ denote the resulting surface. Once again Proposition \ref{prop:identifying same edges} implies that $Q_3$ is a $\mathcal{P}$-surface with locally convex boundary. %In fact every boundary component of $Q_3$ is totally geodesic.

\item \emph{Attaching a surface along the totally geodesic boundary curves with odd labels.}
Every boundary geodesic arc of $Q_3$ is locally convex and has an odd label. This implies that $Q_3$ has totally geodesic boundary and every boundary component maps to a power of one of the non-separating simple closed geodesic curves $\lambda_i$ with some orientation and $i$ odd. % Let $m_i \in \mathbb{N}$ denote the total number of such boundary components of $Q_3$. 

One direction of the Corollary \ref{cor:prescribed boundary} to Neumann's lemma implies that the number of boundary components of $Q_3$ is even and that the total degree over each  curve $\lambda_i$ with one orientation is equal to the total degree over $\lambda_i$ in the opposite orientation. This fact and the other direction of Corollary \ref{cor:prescribed boundary} allows us to construct a $\mathcal{P}$-surface $T$ with $\mathrm{Area}(T)$ linearly bounded in $l(Q_3)$ whose boundary components are the same as $Q_3$ but with opposite orientation. Let $Q$ be the cover of $S$ obtained by identifying the boundary of $Q_3$ with that of $T$ in the obvious way.

%%If $g = 2$ then step $(3)$ is unnecessary and may proceed to step $(4)$ denoting $Q_4 = Q_3$. 
%
%Construct a family $T_{1,i}$ of $\mathcal{P}$-surfaces for every odd $i \in \{3,\ldots,2g-1\}$ by making use of Corollary \ref{cor:presribed boundaries of types A,B}. The boundary components of the surface $T_{1,i}$ map  only to powers of $\lambda_1$ and $\lambda_i$ with the same powers but with opposite orientations as in $Q_3$. Moreover the total degree of the $T_{1,i}$'s is linearly bounded above in $l(\partial Q_3)$.
%
%Let $Q_4$ be the surface obtained by attaching all of the $T_{1,i}$'s to the surface $Q_3$ in the obvious way. Proposition \ref{prop:identifying same edges} shows that $Q_4$ is a $\mathcal{P}$-surface.
%
%\item \emph{Capping off the remaining geodesic boundary arcs of a single odd label.}
%The boundary components of the $\mathcal{P}$-surface $Q_4$ are all geodesic and map to $\lambda_1^{r_1},\ldots,\lambda_1^{r_p}$ and $\lambda_1^{l_1},\ldots,\lambda_1^{l_q}$ for some $p$ and $q$ in $\mathbb{N}$. One direction of Corollary \ref{cor:prescribed boundary of type A} implies that $p+q$ is even and that $\sum_{i=1}^{p} r_i = \sum_{j=1}^q l_j$. The other direction allows us to construct a $\mathcal{P}$-surface $T$ of degree linearly bounded in $l(Q_4)$ whose boundary components are the same as $Q_4$ but with opposite orientation. Let $Q$ be the cover of $S$ obtained by identifying the boundary of $Q_4$ with that of $T$ in the obvious way.
\end{enumerate}
To conclude observe that $R$ embeds in the cover $Q$ making Diagram (\ref{eq:commute R}) commute and that  $\mathrm{Area}(Q\setminus R)$ is bounded above linearly in   $l(\partial Q_1)$ and hence in $l(\partial R)$, as required.
\end{proof}

\section{Proof of geometric flexible stability}

Let $S$ be a compact hyperbolic surface. Consider some $*$-cover $p : X \to S$. We rely on Theorem \ref{thm:variant of LERF} established in   the previous section to complete the proof of Theorem \ref{thm:surface groups are stable} and therefore of our main result Theorem \ref{thm:main result}.

Our strategy is   to construct a cut graph $\Gamma$ on $X$ and then cap-off the complement $X \setminus \Gamma$ to obtain an unramified cover by making use of Theorem \ref{thm:variant of LERF}.
%\begin{definition*}
%	
%	A \emph{cut surface}  corresponding to the cut graph $\Gamma$ is (the completion of) a   connected component of $X \setminus \Gamma$. A closed curve $\gamma$ in $X$ \emph{bounds} the cut surface $C$ if the image of $\gamma$ is equal to a connected component of the boundary $\partial C$.
%	
%\end{definition*}

Since $X$ need not be a cover in the usual sense it might   contain   certain pathological closed curves that cannot exist on a cover of $S$. For example $X$ might admit  a  simple closed curve $\gamma$  such that a lift of $p \circ \gamma$ to $\widetilde{S}$ admits self-intersections. It is clear that if $X$ is   $\varepsilon$-reparable then such a curve has to be eliminated. This   motivates the following.

\begin{prop}
	\label{prop:cut surface embedds in a cover}
Let $\Gamma$ be a cut graph on $X$  and $C$ be any connected component of $X \setminus \Gamma$. Then $C$ embeds in some cover of $S$ as a subsurface with locally convex boundary.
\end{prop}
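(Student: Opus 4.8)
The plan is to pass to a metric completion of $C$ and then apply Lemma~\ref{lem:local isometry and locally convex boundary}. Let $\hat{C}$ be the completion of $C$ with respect to the length metric induced from $X$, and let $\iota\colon\hat{C}\to X$ be the natural $1$-Lipschitz map; its image is the closure $\bar{C}$ of $C$ in $X$, and $\iota$ restricts to an isometry of the interior $\mathring{\hat{C}}$ onto $C$. First I would verify that $\hat{C}$ is a compact connected surface with boundary, with piecewise geodesic and locally convex boundary, and locally isometric to $\HH$ in its interior. Then I would produce a local isometry $\hat{C}\to S$ and conclude by Lemma~\ref{lem:local isometry and locally convex boundary} that $\hat{C}$ --- and therefore $C=\mathring{\hat{C}}$ --- isometrically embeds in a cover of $S$ with locally convex boundary.

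To analyse $\hat{C}$, recall that Condition~(\ref{it:Gamma contains singular locus}) of a cut graph gives $s(X)\subset\Gamma^{(0)}$, so $C$ misses $s(X)$ and the interiors of the edges of $\Gamma$ miss $s(X)$ as well. Hence an interior point of $\hat{C}$ has a neighbourhood isometric to a hyperbolic disc; a point of $\hat{C}$ over the interior of an edge of $\Gamma$ has a neighbourhood isometric to a hyperbolic half-disc; and a point of $\hat{C}$ over a vertex $v\in\Gamma^{(0)}$ has a neighbourhood isometric to a hyperbolic sector based at its apex. Note that $\iota$ need not be injective over $\Gamma$: the component $C$ may abut an edge of $\Gamma$ from both sides, and several angular wedges of $X\setminus\Gamma$ at a vertex $v$ may lie in $C$; the completion separates these, contributing one boundary point of $\hat{C}$ for each such wedge. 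By Condition~(\ref{it:angle of Gamma is at most pi}) --- equivalently, because every connected component of $X\setminus\Gamma$ has locally convex boundary --- each wedge has angle at most $\pi$, so $\partial\hat{C}$ is locally convex. Finally $\hat{C}$ is compact since $X$ is compact and $\Gamma$ is a finite graph, and it is connected because $C$ is.

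For the local isometry, set $\psi=p\circ\iota\colon\hat{C}\to S$. On the interior, $\iota$ is an isometry onto $C\subset X\setminus s(X)$, and $p$ is a local isometry on $X\setminus s(X)$ because $X$ is locally isometric to $S$ away from the branch locus and $*$ is an ordinary point of the smooth surface $S$; the same reasoning covers boundary points over edge interiors of $\Gamma$. At a boundary point $\hat{v}$ over a vertex $v\in\Gamma^{(0)}$, a sector neighbourhood of $\hat{v}$ of angle $\theta\le\pi$ maps by $\iota$ isometrically onto a hyperbolic sector of $X$ at $v$; if $v\notin s(X)$ we are done as before, and if $v\in s(X)$ the map $p$ is locally modelled on $z\mapsto z^{d_v}$, which, restricted to a sector of angle $\theta<2\pi$, is an isometry onto a hyperbolic sector of the same angle based at $*\in S$. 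Thus $\psi$ restricts to an isometry on a neighbourhood of every point, i.e.\ it is a local isometry, and Lemma~\ref{lem:local isometry and locally convex boundary} finishes the argument.

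The crux is that $\bar{C}$ itself need not be an embedded surface with boundary in $X$ --- two sheets of $C$ may meet along an edge of $\Gamma$, and several wedges of $C$ may share a singular vertex of $X$ --- so it is essential to replace $\bar{C}$ by the completion $\hat{C}$ and to check carefully that $\hat{C}$ is a genuine compact surface with locally convex boundary and that $p\circ\iota$ stays a local isometry across the singular points of $X$ that lie on $\partial\hat{C}$. After this local model analysis the statement is immediate from Lemma~\ref{lem:local isometry and locally convex boundary}.
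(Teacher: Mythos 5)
Your proof is correct and follows essentially the same route as the paper: use the two cut-graph conditions to show $C$ misses the singular set and has locally convex boundary, exhibit $p$ (restricted to the completion of $C$) as a local isometry to $S$, and conclude by Lemma~\ref{lem:local isometry and locally convex boundary}. You spell out more carefully than the paper does the step of replacing $C$ by its metric completion $\hat{C}$ (rather than its closure in $X$, which may self-identify along $\Gamma$), which is indeed the right way to make Lemma~\ref{lem:local isometry and locally convex boundary} applicable.
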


%The above relies on a consequence of the Cartan--Hadamard theorem reproduced here as Lemma \ref{lem:lifting local isometry}.

%\begin{proof}[Proof of Lemma \ref{lem:lifting local isometry}]
%\end{proof}

\begin{proof}
	Let $C$ be a connected component of $X \setminus \Gamma$. It follows from the definition of cut graphs that $C$ has no singular points of $s(X)$ in its interior and that its boundary is locally convex. In particular $C$ is non-positively curved and the map $f = p_{|C} :C \to S$ is a local isometry. The result  follows from Lemma \ref{lem:local isometry and locally convex boundary}.
\end{proof}

We are now ready to prove that closed hyperbolic surfaces are flexibly geometrically stable.

\begin{proof}[Proof of Theorem \ref{thm:surface groups are stable}]
Let the constant $\varepsilon > 0$ be given. Take $c > 0$ to be sufficiently small so that  $ c  \max \{1, b_S\}  \le \varepsilon$ where $b_S$ is the constant as in Theorem \ref{thm:variant of LERF}. Let $\delta = \delta(c)$ be the constant provided by Theorem \ref{thm:existance of cut graphs} such      that any $*$-cover $X$ with $\beta(X) < \delta \mathrm{Area}(X)$ admits a $c$-cut graph.

We claim that $\delta$ is as required in the definition of flexible geometric stability with respect to the given $\varepsilon > 0$. To see this consider  a $*$-cover $p : X \to S$ with $\beta(X) < \delta \mathrm{Area}(X)$. We need to show that $X$ is $\varepsilon$-reparable.
%Assume that $\beta(X) < \delta \mathrm{Area}(X)$ where the constant $\delta = \delta(\varepsilon)$ is to be determined below.    We need to show  that $X$ is   $\varepsilon$-reparable.
 Let $\Gamma$ be a $c$-cut graph on $X$.   In particular 
 $$
 l(\Gamma) \le c \mathrm{Area}(X) \le \varepsilon \mathrm{Area}(X).$$ 
  Consider the complement $C =X \setminus \Gamma$. Every connected component of $C$  isometrically embeds  into some cover of $S$ according to  Proposition \ref{prop:cut surface embedds in a cover}.  Relying on Theorem   \ref{thm:variant of LERF} we find a cover $q : Q \to S$ admitting an isometrically embedded copy of $C$ on which the restriction of   $q$ agrees with the map $p$, namely Diagram (\ref{eq:embeds diagram}) commutes. Moreover 
  $$ \mathrm{Area}(Q) - \mathrm{Area}(C) \le  b_S  l(\partial C) \le b_S c \mathrm{Area}(X) \le \varepsilon \mathrm{Area}(X).$$
This completes the verification that $X$ is indeed   $\varepsilon$-reparable.
\end{proof}

The fact that Theorem \ref{thm:surface groups are stable} implies our main result Theorem \ref{thm:main result} is contained in Proposition \ref{prop:geometric stability implies algebraic stability}.
%\begin{remark}
%An examination of the our proof shows that most arguments go through in the Euclidean case. Two issues that do arise are    having a universal bound on the  area of geodesic triangles, and more crucially  the validity of statement (\ref{it:total area of simply connected bounded}) in Proposition \ref{prop:properties of cut graph}.  The first issue can easily be resolved - all triangles that we consider have diameter   at most $2r$.  The second issue requires a small modification to the proof. Indeed, in the Euclidean case we can no longer bound the number of simply connected components. However, every simply connected  component $C$ with $s(X) \cap C = \emptyset$ and with all interior angles at most $\pi$   is $\theta$-isomorphic to a flat torus with $\theta \sim l(\partial C) / \mathrm{Area}(C)$. We leave the details to the reader.
%\end{remark}

%\section{The torus is geometrically flexibly stable}
%
%Are we interested in discussing this? (essentially the same proof with simple modifications, only get quadratic rather than linear stability).
\appendix
\section{Voronoi   and   Delaunay on singular planes}

\label{sec:properties of Voronoi and Delaunay}

We generalize some of the basic properties of the Voronoi tessellation and the Delaunay graph from the classical   Euclidean and hyperbolic cases to the framework of a "hyperbolic plane  with singularities". 

In particular we present the detailed proofs of Propositions \ref{prop:properties of Voronoi and Delaunay}, \ref{prop:two Voronoi cells are disjoint or share a common side}, \ref{prop:Delaunay graph is embedded} and \ref{prop:the angle between two edges of Delaunay graph is at most pi} that were  merely stated  without a proof in Section   \ref{sec:hyperbolic planes with singularity}. %In some sense this section can be regarded as an appendix.

%\medskip
%
%\emph{What follows can be seen as   an appendix to our work.}

\subsection*{Hyperbolic planes with singularties}
Let $S$ be a compact hyperbolic surface and  $p : X \to S$   a fixed  $*$-cover. Let $q : \widetilde{X} \to X$ denote the universal cover of $X$ equipped with the pullback length metric $d_{\widetilde{X}}$. The singular set of $\widetilde{X}$ is  given by $s(\widetilde{X}) = q^{-1}(s(X))$. 
The \emph{Voronoi cell} $A_v$ at any vertex $v \in s(\widetilde{X})$ and the \emph{Delaunay graph} $D$ were defined in Section \ref{sec:hyperbolic planes with singularity}.
%For the reader's convenience we recall the following definitions. 
% \begin{itemize}
%\item
% The \emph{Voronoi cell} $A_v$ at the singular point $v \in s(\widetilde{X})$ is 
%$$ A_v = \{x \in \widetilde{X}\: : \: d_{\widetilde{X}}(x,v) \le d_{\widetilde{X}}(x,u)\quad \forall u \in s(\widetilde{X})   \}.$$
%\item The vertex set of the Delaunay graph $D$ is the singular set $s(\widetilde{X})$. 
%\item Two vertices $v_1$ and $v_2$ of $ D$ span an edge  whenever there is a closed metric ball $B \subset \widetilde{X}$ with $\mathring{B} \cap s(\widetilde{X}) = \emptyset$ and $ B \cap s(\widetilde{X}) = \{v_1, v_2\}$. 
%\end{itemize}

We find it useful to introduce the following additional notation. Let $m(v,u)$ denote the set of \emph{midpoints}
$$  m(v, u) = \{x \in \widetilde{X}\: : \: d_{\widetilde{X}}(x,v) = d_{\widetilde{X}}(x,u) \} $$ 
between any two given points $v,u \in \widetilde{X}$. It is well-known   \cite[\S7.21]{beardon2012geometry} that the set of midpoints of any pair of  points in the hyperbolic plane is the perpendicular bisector to the geodesic arc connecting these two points.

Two vertices $v_1$ and $v_2$ of the Delaunay graph are connected by an edge   if and only if there exists a mid-point $x \in m(v_1,v_2)$ so that $d(x,u) > d(x,v_1) = d(x,v_2)$ for every other vertex $u\in s(\widetilde{X}) \setminus \{v_1,v_2\}$.

\begin{lemma}
\label{lem:uniqueness of circumcenter}
Let $x_1,x_2,x_3 \in \widetilde{X}$ be three distinct   points.  Then there is at most a single closed metric ball $B \subset \widetilde{X}$ with $\mathring{B} \cap s(\widetilde{X}) = \emptyset$ and $ \{x_1, x_2, x_3\} \subset \partial B$.
\end{lemma}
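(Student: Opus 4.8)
The plan is to reduce everything to the classical uniqueness of the circumscribed circle in the hyperbolic plane $\mathbb{H}$ by embedding the relevant region into $\mathbb{H}$ via Corollary \ref{cor:convex subset embeds into H}. Suppose for contradiction that there are two distinct closed metric balls $B$ and $B'$ in $\widetilde{X}$, each with empty-interior intersection with the singular set $s(\widetilde{X})$, and with $\{x_1,x_2,x_3\}\subset\partial B\cap\partial B'$. Let $c,c'$ be their centers and $\rho,\rho'$ their radii. The first step is to observe that $\rho=d_{\widetilde X}(c,x_i)$ and $\rho'=d_{\widetilde X}(c',x_i)$ for each $i$, and that the geodesic segments from $c$ (resp.\ $c'$) to the $x_i$ lie inside $B$ (resp.\ $B'$) by convexity of balls in a $\mathrm{CAT}(-1)$ space; in particular their interiors avoid $s(\widetilde{X})$.

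Next I would build a convex set whose interior misses $s(\widetilde X)$ and which contains all the data needed to force a contradiction. A natural candidate is the convex hull $K$ of $B\cup B'$ inside $\widetilde X$; since $\widetilde X$ is $\mathrm{CAT}(-1)$ this hull is well-defined and convex, but one must check $\mathring K\cap s(\widetilde X)=\emptyset$. This is the point I expect to be the main obstacle: a priori the convex hull of the union of two balls could swallow a singular point even though each ball avoids singularities in its interior. To handle this I would instead work more locally — take $K$ to be the union of the two balls together with the (at most two) "bigon" regions bounded between corresponding boundary arcs, or simply take the convex hull of the finite set $\{x_1,x_2,x_3,c,c'\}$, which is a geodesic triangle-like polygon. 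One then argues that if a singular point $v$ lay in the interior of this small hull, then $v$ would lie in the interior of one of the two balls (since the hull of these five points is covered by $B\cup B'$ — each of the six edges $c x_i$, $c' x_i$ lies in the respective ball, and the balls are convex), contradicting the hypothesis. This containment claim is the crux and would need a short careful argument using convexity of balls and the fact that the $x_i$ are common to both boundaries.

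Once such a convex set $K$ with $\mathring K\cap s(\widetilde X)=\emptyset$ and $\{x_1,x_2,x_3,c,c'\}\subset K$ is in hand, Corollary \ref{cor:convex subset embeds into H} gives an isometric embedding $F\colon K\hookrightarrow\mathbb H$. Then $F(B\cap K)$ and $F(B'\cap K)$ are subsets of genuine hyperbolic metric balls (intersected with $F(K)$) centered at $F(c),F(c')$ with radii $\rho,\rho'$, and $F(x_1),F(x_2),F(x_3)$ are three distinct points lying on the boundary circles $\partial B_{\mathbb H}(F(c),\rho)$ and $\partial B_{\mathbb H}(F(c'),\rho')$. The final step invokes the classical fact that three distinct points in $\mathbb H$ lie on at most one hyperbolic circle — equivalently, the perpendicular bisectors of the three pairs among $\{F(x_i)\}$ meet in at most one point \cite[\S7.21]{beardon2012geometry} — to conclude $F(c)=F(c')$ and $\rho=\rho'$, whence $c=c'$ (as $F$ is injective) and $B=B'$, the desired contradiction. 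I would also note the degenerate case in which $x_1,x_2,x_3$ are pairwise equidistant from a common singular point or collinear along a local geodesic is handled uniformly by the same embedding argument, since uniqueness of the circumcenter in $\mathbb H$ holds for any three distinct points that admit a circumscribed circle at all.
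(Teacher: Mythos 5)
The paper's own proof is shorter and avoids the difficulty you correctly flag. It embeds each ball \emph{separately} into $\mathbb{H}$ via Corollary \ref{cor:convex subset embeds into H}, notes that $T=\triangle(x_1,x_2,x_3)\subset B$ by convexity, and concludes that the center of $B$ is the circumcenter of $T$ determined by the triangle (as the common intersection of the perpendicular bisectors of its sides); applying the same to $B'$ forces the same center and radius, hence $B=B'$. Crucially, the paper never needs a \emph{single} convex, singularity-free region that contains both centers.

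Your proposal, by contrast, hinges precisely on producing such a region $K$ with $\{x_1,x_2,x_3,c,c'\}\subset K$ and $\mathring{K}\cap s(\widetilde{X})=\emptyset$, and this is where it breaks down. The justification you offer in the parenthetical --- ``each of the six edges $cx_i$, $c'x_i$ lies in the respective ball, and the balls are convex'' --- only shows that the two sub-hulls $\mathrm{hull}(\{c,x_1,x_2,x_3\})\subset B$ and $\mathrm{hull}(\{c',x_1,x_2,x_3\})\subset B'$. It does \emph{not} follow that $\mathrm{hull}(\{c,c',x_1,x_2,x_3\})\subset B\cup B'$: the full hull of five points is in general strictly larger than the union of the two sub-hulls (it also contains the ``quadrilateral'' region spanned between $c$, $c'$ and a pair of the $x_i$, e.g.\ the triangle $\triangle(c,c',x_i)$, which need not lie in either ball). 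This is not a formality: it is consistent with the data that $d(c,c')>\rho=\rho'$, so that $c\notin B'$ and $c'\notin B$, and then neither the segment $[c,c']$ nor a neighborhood of its midpoint need be covered by $B\cup B'$. There is a second, subtler issue even if the hull were contained in $B\cup B'$: the hypothesis only forbids singular points from the \emph{open} balls, so a singular point sitting on $\partial B\setminus B'$ (or $\partial B'\setminus B$) could still lie in $\mathring{K}$, and Corollary \ref{cor:convex subset embeds into H} would not apply. (Recall the $x_i$ themselves are allowed to be singular, since they lie on $\partial B$.)

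So the ``crux'' you flag is a genuine gap, not merely a detail to be filled, and the route chosen makes it harder than necessary. The fix is to follow the paper and not try to place $c$ and $c'$ in a common chart: work inside $B$ alone to see that $c$ is pinned down by $T$ (it is the unique intersection of the perpendicular bisectors of the sides of $T$, traced through the singularity-free interior of $B$), then do the same inside $B'$, and observe that since both constructions are performed in singularity-free neighborhoods of the same triangle $T\subset B\cap B'$, they produce the same point of $\widetilde{X}$ and the same radius.
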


\begin{proof} 
Assume that $B \subset \widetilde{X}$ is a closed ball as in the statement of the lemma. Let $T \subset \widetilde{X}$ be the geodesic triangle with vertices $x_1,x_2,x_3$. The convexity of the ball $B$ implies   that $T \subset B$. Making use of Corollary \ref{cor:convex subset embeds into H} we may regard $B$ as being isometrically embedded in the hyperbolic plane. The center $x_0$ of the ball $B$ is determined by the triangle $T$. More precisely $x_0$ is the mutual intersection point of the three mid-point bisectors to the edges of $T$. The same reasoning shows that  any other ball $B' \subset \widetilde{X}$ as in the statement of the lemma   necessarily has the same center as $B$   and must therefore     agree with $B$.
\end{proof}

\medskip

Proposition \ref{prop:properties of Voronoi and Delaunay} says that every Voronoi cell is homeomorphic to a disc, is convex with a piecewise geodesic boundary and is determined by its Delaunay neighbors.

\begin{proof}[Proof of Proposition \ref{prop:properties of Voronoi and Delaunay}]
Let $v \in s(\widetilde{X})$ be a vertex of the Delaunay graph. The distance function $d_{\widetilde{X}}(\cdot, u)$ is continuous for every point $u \in \widetilde{X}$. Therefore the Voronoi cell $A_v$ is closed. It is clear from the definition  that $A_v \cap s(\widetilde{X}) = \{v\}$. Since the singular set $s(\widetilde{X})$ is co-bounded  it follows that $A_v$ is   bounded and hence compact.  Therefore  there  is a finite subset of vertices $\{u_1, \ldots, u_n\} \subset s(\widetilde{X})$ for some $n \in \mathbb{N}$
% so that $d(x,v) < d(x,w)$ for every vertex $w \in s(\widetilde{X}) \setminus \{v,u_1,\ldots, u_n\}$  and   \emph{every} point $x \in A_v$. We claim that
such that the Voronoi cell $A_v$ is given by
$$  A_v = \{x \in \widetilde{X}\: : \: d_{\widetilde{X}}(x,v) \le d_{\widetilde{X}}(x,u_i)\quad \forall i \in \{1,\ldots,n\} \}.$$
%To see this consider a point $x \notin A_v$ and let $\gamma$ be the geodesic arc 
If $x \in \partial A_v$ is a boundary point of $A_v$ then   at least one of the above inequalities must be an equality. In other words
$ \partial A_v \subset A_v \cap \bigcup_{i=1}^n m(v,u_i)$.

For every boundary point $x \in \partial A_v$ let $F_x \subset \{u_1,\ldots,u_n\}$ be the subset given by   $u_i \in F_x$ if and only if $x \in m(v,u_i)$. Note that $F_x $ is non-empty for every $x \in \partial A_v$.   Every boundary point  $x \in \partial A_v$ admits a closed ball $B_x \subset \widetilde{X}$ centered at $x$ with $\mathring{B}_x \cap s(\widetilde{X}) = \emptyset$ and $\partial  B_x \cap s(\widetilde{X}) = F_x \cup \{v\}$. This relies on the fact that $x \in A_v$ and therefore 
   $d(x,v) \le d(x,u)$ for \emph{every} vertex $u \in s(\widetilde{X})$. 
   
   Every boundary point $x \in \partial A_v$ admits  an open neighborhood $U_x$ so that 
$$ A_v \cap U_x =  \{y \in U_x \: : \: d_{\widetilde{X}}(y,v) \le d_{\widetilde{X}}(y,u_i)\quad \forall i \in F_x \}. $$
Assume without loss of generality  that $U_x$ is sufficiently small so that $U_x \subset B_x$. 
   The ball $B_x$ is convex  and can be regarded   as being isometrically embedded in the  hyperbolic plane  by Corollary \ref{cor:convex subset embeds into H}.  We conclude that $ A_v \cap U_x$ can be isometrically identified with a  neighborhood of the point $x$ inside a \emph{hyperbolic Voronoi cell}.
   
Note  that $|F_x| = 1$ for all but finitely many boundary points $x \in \partial A_v$ according to    Lemma \ref{lem:uniqueness of circumcenter}.  This condition is open in the sense that given $x \in \partial A_v$ with $|F_x| = 1$ we have $F_x = F_y $ for every $y \in A_v \cap U_x$. We conclude that  $\partial A_v$ is locally isometric to a geodesic segment away from   finitely many points where $|F_x| > 1$. Moreover $\partial A_v$ is locally convex at those points as well. In particular the boundary of $A_v$ is piecewise geodesic.

The above description of the boundary of  $  A_v$ in terms of the    local hyperbolic picture with respect to the open cover   $U_x$ shows that $A_v$ is locally convex. The Cartan--Hadamard theorem implies that $A_v$ is convex, see e.g. Lemma \ref{lem:lifting local isometry}. The exponential map at $v$ sets up an homeomorphism of $A_v$ to a disc. This concludes the proof of Items (\ref{it:Voronoi cell is homeomorphic to a disc}) and (\ref{it:Voronoi has pw geodesic boundary}).

Note that a vertex $u$ is adjacent to $v$ in the Delaunay graph if and only if $u = u_i$ for some $ i\in\{1,\ldots,n\}$ and there is a boundary point $ x \in \partial A_v$ with $F_x = \{u_i\}$. In other words, the Delaunay neighbors of $v$ correspond to the geodesic sides of the cell $A_v$, and Item (\ref{it:Voronoi defined in terms of neighbors}) of Proposition \ref{prop:properties of Voronoi and Delaunay} follows.
\end{proof}

Proposition \ref{prop:two Voronoi cells are disjoint or share a common side} says that the interiors of distinct Voronoi cells are disjoint.

\begin{proof}[Proof of Proposition \ref{prop:two Voronoi cells are disjoint or share a common side}]
Assume that the intersection   $A_{v,u} = A_v \cap A_u$ is non-empty. 
Every  point $x \in A_{v,u} $ admits a closed metric ball $B_x \subset \widetilde{X}$ centered at $x$ with $\mathring{B}_x \cap s(\widetilde{X}) = \emptyset$ and $ \{v,u\} \subset \partial B_x$. The ball $B_x$ is convex and  so  can be regarded as being isometrically embedded in  the hyperbolic plane by Corollary \ref{cor:convex subset embeds into H}. 

Hyperbolic geometry implies that $m(v,u) \cap B_x$ is a geodesic segment for every point $ x \in A_{v,u}$. Since $A_{v,u} \subset m(v,u)$   the convexity of   $A_{v,u} $ implies that the intersection $A_{v,u} \cap B_x$ is a geodesic segment as well. The compactness of $A_{v,u} $ allows us to extract a finite cover using   such metric balls. It follows that $A_{v,u} $ is a finite union of geodesic segments. Since $A_{v,u} $ is convex it must be  a single point or a   single geodesic segment. 
In particular $A_{v,u}$ has empty interior and    $\mathring{A}_v \cap \mathring{A}_u = \emptyset$. Since both $A_v$ and $A_u$ are topological discs this implies   $\partial A_v \cap \mathring{A}_u = \mathring{A}_v \cap \partial A_u = \emptyset$. We conclude that  $A_{v,u} \subset \partial A_v \cap \partial A_u$ and that $A_{v,u}$ is a common geodesic side of the Voronoi cells $A_v$ and $A_u$. 

The second statement of Proposition \ref{prop:two Voronoi cells are disjoint or share a common side} concerning adjacency in the Delaunay graph     follows from  the last paragraph of the proof of Proposition \ref{prop:properties of Voronoi and Delaunay}.
\end{proof}

Proposition  \ref{prop:Delaunay graph is embedded} deals with the Delaunay graph and shows it is embedded.
\begin{proof}[Proof of Proposition \ref{prop:Delaunay graph is embedded}]
Let $v_1,u_1$ and $v_2,u_2$ be vertices of the Delaunay graph so that $v_i$ is adjacent to $u_i$.   Let $x_1$ and $x_2$ be points in $\widetilde{X}$ such that for $i \in \{1,2\}$ there is a closed metric ball $B_i \subset \widetilde{X}$ centered at $x_i$ and with $\mathring{B}_i \cap s(\widetilde{X}) = \emptyset$ and $\partial B_i \cap s(\widetilde{X}) = \{v_i, u_i\}$. Let $\gamma_i$ with $i \in \{1,2\}$ be the geodesic arc realizing the Delaunay edge between $v_i$ and $u_i$ and connecting these two points  in $\widetilde{X}$. In particular $\gamma_i \subset B_i$. 

Assume towards contradiction that the two chords $\gamma_1$ and $\gamma_2$ intersect non-trivially along their interior. Recall that a pair of geodesics in a $\mathrm{CAT}(-1)$-space admit at most a single intersection point. An  examination of the resulting planar diagram shows that $|\partial B_1 \cap \partial B_2| > 2$. Therefore   Lemma \ref{lem:uniqueness of circumcenter} applied  with respect  to any three distinct points of the intersection $\partial B_1 \cap \partial B_2$ shows that the balls $B_1$ and $B_2$ must coincide. This is  a contradiction.

%T%he point of intersection belongs to the convex set $B_1 \cap B_2$. Both balls $B_1$ and $B_2$ individually and in particular their intersection $B_1 \cap B_2$ can be regarded as being isometrically embedded into the hyperbolic plane by Corollary \ref{cor:convex subset embeds into H}. We have $x_1,x_2 \in B_1 \cap B_2$. Since $v_1,u_1 \notin B_2$ and likewise $v_2, u_2 \notin B_1$ we derive   a contradiction to the fact that two distinct  Euclidean as well as hyperbolic circles admit at most two intersection points.

The fact that the projection $q(D)$ is embedded in $X$ follows immediately from the above discussion combined with  the $\pi_1(X)$-invariance of the Delaunay graph.
\end{proof}

Proposition  \ref{prop:the angle between two edges of Delaunay graph is at most pi} shows that the connected components of the complement of the Delaunay graph have locally convex boundary.

\begin{proof}[Proof of Proposition \ref{prop:the angle between two edges of Delaunay graph is at most pi}]
Assume towards contradiction that the boundary of some connected component of $\widetilde{X} \setminus D$ is not locally convex. Equivalently,  there is some Delaunay vertex $v \in s(\widetilde{X})$ and some half-space $\mathfrak{H}$ with $v \in \partial \mathfrak{H}$   such that no Delaunay edge incident at $v$ is contained in $\mathfrak{H}$. The proof of Lemma \ref{lem:hyperbolic geometry lemma}  and in particular the Claim contained in that proof shows that the Voronoi cell $A_v$ has infinite area\footnote{While Lemma \ref{lem:hyperbolic geometry lemma} appears below Proposition \ref{prop:the angle between two edges of Delaunay graph is at most pi} in the text, the proof of that lemma is independent of this proposition.}. This contradicts the fact that $A_v$ is compact.
%  $\{x \in \widetilde{X} :  d_{\widetilde{X}}(x,v) \le d_{\widetilde{X}}(x,u) \}$ where $u \in s(\widetilde{X})$ ranges over the vertices adjacent to  $v$ in the Delaunay graph, as follows from Item (\ref{it:Voronoi defined in terms of neighbors}) of Proposition \ref{prop:properties of Voronoi and Delaunay}. Therefore $\mathfrak{H} \subset A_v$ which is a contradiction to the compactness of the Voronoi cell $A_v$   established in Item (\ref{it:Voronoi cell is homeomorphic to a disc}) of Proposition \ref{prop:properties of Voronoi and Delaunay}.
\end{proof}

\bibliography{testability}
\bibliographystyle{alpha}

\end{document}